\newtheorem{thm}{Theorem}[section]
\newtheorem{prop}[thm]{Proposition}
\newtheorem{cor}[thm]{Corollary}
\newtheorem{lem}[thm]{Lemma}
\newtheorem*{claim}{Claim}
\newtheorem{assump}{}
\newtheorem{conv}[thm]{Convention}
\theoremstyle{definition}
\newtheorem{defn}[thm]{Definition}
\newtheorem{defnconst}[thm]{Definition-Construction}
\theoremstyle{remark}
\newtheorem*{question}{Question}
\newtheorem{obs}[thm]{Observation}
\newtheorem*{rem}{Remark}
\newtheorem{example}[thm]{Example}
\newcommand{\G }{\mathscr{G} (G, S\cup \mathcal P)}
\newcommand{\Gc }{\mathscr{\hat G} (G, S\cup \mathcal P)}
\newcommand{\Gx }{\mathscr{G} (G, S)}
\newcommand{\Gr }{\mathcal{G}}
\newcommand{\GP }{(G, \mathcal P)}
\newcommand{\act}{\curvearrowright}
\newcommand{\Gf}{\overline{G}_\lambda}
\newcommand{\pGf}{\partial_\lambda{G}}
\newcommand{\pG}{\partial{G}}
\newcommand{\cG}{\partial^c{G}}
\newcommand{\pX}{\partial X}
\newcommand{\PS}[1]{\mathcal P_{#1}(s,o)}
\newcommand{\PSv}[1]{\mathcal P_{#1}(s,v)}
\newcommand{\Ps}[1]{\mathcal P_{#1}(s,1)}
\newcommand{\PSS}[1]{\mathcal P'_{#1}(s,o)}
\newcommand{\g}[1]{\delta_{#1}}
\newcommand{\E}{\mathcal E(\epsilon, L)}
\newcommand{\GE}{G \setminus  \mathcal E(\epsilon, L)}
\newcommand{\EE}{\mathcal E(\epsilon', L)}
\newcommand{\GEE}{G \setminus \mathcal E(\epsilon', L)}
\newcommand{\Q}{\mathcal Q(\epsilon, L)}
\newcommand{\GQ}{Go \setminus \mathcal Q(\epsilon, L)}
\newcommand{\GQQ}{Go \setminus \mathcal Q(\epsilon', L)}
\newcommand{\nh}{\llangle h^n \rrangle}
\newcommand{\nkh}{\llangle h^{kn} \rrangle}
\newcommand{\diam }[1]{{\|#1\|}}
\newcommand{\proj }{\mbox{Pr}}
\newcommand{\inj }{\mbox{Inj}}
\newcommand{\ax }{\mbox{\mbox{Ax}}}
\newcommand{\dirac}[1]{{\mbox{Dirac}}{(#1)}}
\newcommand{\len }{\ell}
\title[Patterson-Sullivan measures and growth of quotients]{Patterson-Sullivan measures and growth of relatively hyperbolic groups}
\author{Wen-yuan Yang}
\address{Beijing International Center for Mathematical Research \&
School of Mathematical Sciences, Peking University, Beijing, 100871,
P.R.China}
\address{Le Département de Mathématiques de la Faculté des
Sciences d'Orsay, Université Paris-Sud 11, France}
\email{yabziz@gmail.com}
\thanks{This research was supported by the ERC starting grant GA 257110 ”RaWG”.}
\begin{document}

\subjclass[2000]{Primary 20F65, 20F67}

\date{}

\dedicatory{}

\keywords{Growth, relatively hyperbolic groups, Patterson-Sullivan
measures, small cancellation}

\begin{abstract}
We prove that for a relatively hyperbolic group, there is a
sequence of relatively hyperbolic proper quotients such that their
growth rates converge to the growth rate of the group. Under natural
assumptions, similar result holds for the critical exponent of
a cusp-uniform action of the group on a hyperbolic metric space. As a
corollary, we obtain that the critical exponent of a torsion-free
geometrically finite Kleinian group can be arbitrarily approximated
by those of proper quotient groups. This resolves a question of
Dal'bo-Peign\'e-Picaud-Sambusetti.

Our approach is based on the study of Patterson-Sullivan measures on
Bowditch boundary of a relatively hyperbolic group. The uniqueness
and ergodicity of Patterson-Sullivan measures are proved when the
group is of divergent type. We also prove a variant of the Sullivan Shadow Lemma,
called \textit{Partial Shadow Lemma}. This tool allows us to prove
several results on growth functions of balls and cones. One central
result is the existence of a sequence of geodesic rooted trees with growth
rates sufficiently close to the growth rate of the group, and transition
points uniformly spaced in trees. We are then able to embed these trees into
small cancellation quotients. This proves the results in
the previous paragraph.
\end{abstract}

\maketitle

\setcounter{tocdepth}{1} \tableofcontents

\section{Introduction}
In recent years, there has been an increasing interest in the study
of relatively hyperbolic groups. The notion of relative
hyperbolicity originated in seminal work of Gromov in \cite{Gro},
and was later re-formulated and elaborated on by many authors, cf.
\cite{Farb}, \cite{Bow1}, \cite{Osin}, \cite{DruSapir}, \cite{Ge1},
to name a few.  The class of relatively hyperbolic groups includes
many naturally occurring groups, such as word hyperbolic groups,
fundamental groups of non-uniform lattices with negative curvature
\cite{Bow3}, limit groups \cite{Dah}, and CAT(0) groups with
isolated flats \cite{HruKle}.

It is a popular theme in geometric group theory to study a group via
a proper action on a model space with nice geometric properties. The
asymptotic geometry of the group action then encodes information
from the group. For example, one could simply count the number of
orbit points in a growing ball. A celebrated theorem of Gromov in
\cite{Gro2} says that the polynomial growth in a Cayley graph
implies the group being virtually nilpotent.

A group is elementary if it is finite or a finite extension of a
cyclic group. Recall that non-elementary relatively hyperbolic
groups have exponential growth, to which one can associate a real
number called \textit{growth rate} to measure the speed. The goal of
the present paper is then to investigate growth rate for quotients
of a relatively hyperbolic group which admits a proper action on
various geometric spaces.  In what follows, we describe our results
in detail.
\subsection{Growth rates of relatively hyperbolic groups}
We first prepare some general setup. Let $G$ be a group acting
properly on a proper geodesic metric space $(X, d)$. Fix a basepoint $o \in
X$. Denote $N(o, n) = \{g \in G: d(o, go) \le n\}$ for $n\in
\mathbb N$.  The \textit{growth rate} $\g A$ of a subset $A \subset
G$ with respect to $d$ is then defined as
$$
\g A = \limsup\limits_{n \to \infty} \frac{\ln \sharp (N(o, n) \cap
A)}{n}.
$$
Note that $\g A$ does not depend on the choice of $o$.

Let $\Gamma$ be a normal subgroup in $G$. The quotient
$\bar G=G/\Gamma$ is called \textit{proper} if $\Gamma$ is
infinite. Consider an action of $\bar G$ on $\bar X=X/\Gamma$ by $\Gamma g \cdot (\Gamma x) \to
\Gamma gx$. Equip $\bar X$ with the quotient metric $\bar d$. Then $\bar G$ acts properly on $\bar X$, for which we denote by $\g{\bar G}$ the growth rate of $\bar G$ with respect to $\bar d$.
\begin{defn}
The action of $G$ on $(X, d)$ is called \textit{growth tight} if $\g{G}
> \g{\bar G}$ for any proper quotient $\bar G$ of $G$.
\end{defn}
We consider here proper quotients, otherwise the equality $\g G=\g {\bar
G}$ holds.
\begin{rem}
Growth tightness was introduced by Grigorchuk and de la Harpe in
\cite{GriH} for word metric. The study of growth tightness for
general metrics was proposed by Sambusetti in \cite{Sam1}.  See
\cite{AL}, \cite{Sam2}, \cite{Sam3}, \cite{Sab}, \cite{DPPS} for
further references about growth tightness.
\end{rem}

First of all, we consider the model space arising from Cayley graphs
of $G$. Let $S$ be a finite generating set such that $1 \notin S$ and $S=S^{-1}$.
Then $G$ acts on the Cayley graph $\Gx$ endowed with the word metric
$d$. Here $\g G$ is the usual growth rate of $G$ with respect to
$S$. In \cite{YANG6}, we proved that any (non-elementary) relatively
hyperbolic group is growth tight with respect to the word metric. Hence,
a natural question arises as follows:
\begin{question}
Is there a gap between $\g G$
and $\sup\{\g{\bar G}\}$ over all proper quotients $\bar G$?
\end{question}
Our first main result is to answer this question negatively by stating the following.

\begin{thm}\label{ThmA}
Suppose that $G$ is a relatively hyperbolic group with a finite
generating set $S$. Then there exists a sequence $\bar G_n$ of
relatively hyperbolic proper quotients of $G$ such that
$$\lim\limits_{n \to \infty} \g{\bar G_n} = \g{G}.$$

\end{thm}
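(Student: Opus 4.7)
The plan is to follow the strategy already announced in the abstract: construct Patterson-Sullivan measures on the Bowditch boundary, use them to produce a rich family of orbit points whose geodesics to $o$ behave well with respect to the parabolic structure, package these geodesics into trees whose growth rate approaches $\g G$, and then quotient by a small-cancellation relation that kills no edge of the tree.

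More concretely, I would first develop the Patterson-Sullivan theory on the Bowditch boundary $\pG$ so that, for the critical exponent $\g G$, a conformal density of dimension $\g G$ exists and (when $G$ is divergent) is unique and ergodic. The shadows of parabolic cosets in a relatively hyperbolic setting are too large for the classical Sullivan Shadow Lemma to give sharp estimates, so the central technical tool is a \emph{partial shadow lemma}: one restricts to the part of a shadow consisting of points whose geodesic from $o$ has a transition point (in the sense of Hruska) at a controlled location. The resulting measure estimates translate into counting estimates for the set of orbit points $go$ in $B(o,n)$ whose geodesic $[o,go]$ has transition points spaced at intervals of length at most some fixed constant $L=L(\epsilon)$. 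By choosing $L$ large enough, the growth rate of this restricted set of orbit points can be made to exceed $\g G - \epsilon$.

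Next I would organise these geodesics into a rooted tree $T_\epsilon$ in the Cayley graph $\Gx$: its vertices are certain orbit points $go$ and its branches are subsegments of the selected geodesics. By construction, $T_\epsilon$ has growth rate at least $\g G - \epsilon$, and every length-$L$ subsegment of any branch contains a transition point, which means no branch of $T_\epsilon$ tracks a single parabolic coset for longer than $L$. Then, by the standard machinery of hyperbolically embedded subgroups (or equivalently Osin-style small cancellation over relatively hyperbolic groups), one can choose a loxodromic element $h \in G$ with sufficiently contracting axis such that, for all large $n$, the quotient $\bar G_n = G/\nh$ is again relatively hyperbolic and the projection $G \to \bar G_n$ is injective on every ball of radius that depends on how long $h^n$ is. Taking $n$ so large that this radius exceeds the diameter of the truncated tree used to estimate growth, the tree $T_\epsilon$ embeds isometrically into the Cayley graph of $\bar G_n$; the transition-point spacing condition is exactly what guarantees that the killed relator (a power of $h$) does not shortcut any branch. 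Consequently $\g{\bar G_n} \ge \g G - \epsilon$, and since $\g{\bar G_n} \le \g G$ always holds, letting $\epsilon \to 0$ along a subsequence produces the required sequence.

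The main obstacle, and the step that makes the whole program non-trivial, is the partial shadow lemma together with the construction of the tree: one must simultaneously keep the growth rate within $\epsilon$ of $\g G$ and impose the uniform spacing of transition points, which cuts out exactly the orbit points that drift too deeply into parabolic subgroups. Everything else — verifying small cancellation, preserving relative hyperbolicity of the quotient, and reading off the growth lower bound from the embedded tree — is comparatively mechanical once the Patterson-Sullivan estimates are in place.
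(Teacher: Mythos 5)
Your overall architecture matches the paper's: Patterson--Sullivan measures on the Bowditch boundary, a partial shadow lemma, a high-growth geodesic tree with uniformly spaced transition points, and a small-cancellation quotient into which the tree embeds. The two gaps are both in the embedding step. You build the tree so that ``no branch tracks a single parabolic coset for longer than $L$'', i.e.\ you take transition points relative to the system $\mathbb P$ of peripheral cosets only. That says nothing about how long a branch can fellow-travel the axis of the loxodromic element $h$ you later kill: a geodesic in $\Gx$ can have $\mathbb P$-transition points every $L$ steps and still run along $\ax(h)$ arbitrarily long, and then the relator $h^{nk}$ shortcuts it and your embedding fails. The paper's remedy is to pass first to the enlarged peripheral system $\mathcal P \cup \{gE(h): g\in G\}$ (relative hyperbolicity is preserved) and to build the tree of Theorem~\ref{LargeTreeG} with transition points relative to the contracting family $\mathbb P \cup \mathbb E$, where $\mathbb E$ consists of the $E(h)$-cosets; only with that enlargement does the spacing condition forbid long excursions near $\ax(h)$.

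The second gap is the conclusion you draw from small cancellation. You argue that $\pi\colon G \to \bar G_n$ is injective on a ball of radius $R_n \to \infty$ and choose $n$ so that $R_n$ exceeds the diameter of the truncated tree. This gives $\sharp N(1,R) \succ \exp(\sigma R)$ in $\bar G_n$ only for $R \le R_n$, but $\g{\bar G_n}$ is a $\limsup$ as $R \to \infty$, and once $n$ is fixed so is the injectivity radius; a lower bound on one ball does not bound the growth rate from below unless you also supply a supermultiplicativity estimate for ball sizes, which you do not. The paper instead shows (Proposition~\ref{injective}, via the Qualitative Greendlinger Lemma for very rotating families) that $\pi$ is injective on the \emph{entire} set $\LW$ of lexi-geodesic words that do not $\epsilon$-contain a long $E(h)$-subword, and then checks that the whole infinite tree $\mathcal T^0$ lies inside $\LW$ --- this is exactly where the enlarged transition-point condition enters, via Lemma~\ref{Subword}. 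That gives the honest inequality $\g{\bar G_n} \ge \g{\mathcal T} \ge \sigma$.
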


We compare Theorem \ref{ThmA} with some related works. In a
hyperbolic group $G$ without torsion, Coulon recently showed in
\cite{Coulon} that the growth rate of the periodic quotient $G/G^n$
converges to $\g G$ as $n$ odd approaches $\infty$. In Theorem
\ref{ThmA}, we construct $\bar G_n$ as small cancellation quotients
$G/\nh$ for certain $n\gg 0$ over a hyperbolic element $h \in G$.

In \cite{GriH}, Grigorchuk and de la Harpe asked under which
conditions the growth rate of a sequence of one-relator groups tends
to that of a free group. In \cite{Shu}, Shukhov showed that this is
true for one-relator small cancellation groups, when the length of
this relator goes to infinity.  In \cite[Corollary 2]{Erschler},
Erschler established a generalization for any small cancellation
group $G$: the growth rate of small cancellation quotient of $G$ by
adjoining a new relator $r$ goes to $\g G$ as $\len(r) \to \infty$.
Hence, Theorem \ref{ThmA} represents a further generalization in this
direction.


We next consider the model space which comes from the definition of
a relatively hyperbolic group. Relative hyperbolicity of a group
admits many equivalent formulations \cite{Farb}, \cite{Bow1},
\cite{DruSapir}, \cite{Osin}, \cite{Hru}, \cite{Ge1}, among which is
the following one stated in \cite{Hru}, generalizing the definition of a
geometrically finite Kleinian group in \cite{Marden}.

\begin{defn}\label{RHdefn}
Suppose $G$ admits a proper and isometric action on a proper
hyperbolic space $(X, d)$ such that $G$ does not fix a point in the Gromov boundary $\pX$. Denote by $\mathcal P$ the set of
maximal parabolic subgroups in $G$. Assume that there is a
$G$-invariant system of (open) horoballs $\mathbb U$ centered at
parabolic points of $G$ such that the action of $G$ on the complement
$$X \setminus \mathcal U,\; \mathcal U := \cup_{U \in \mathbb U} U$$
is co-compact. Then the pair $\GP$ is said to be \textit{relatively
hyperbolic}, and the action of $G$ on $X$ is called
\textit{cusp-uniform}.
\end{defn}

In this setting, the growth rate $\g G$ for the action of $G$ on $X$ is
usually called the \textit{critical exponent}, which has been studied
for a long time in the fields of Kleinian groups, dynamical systems, and
so on. It is known that parabolic subgroups are quasi-isometrically
embedded with respect to word metric, cf \cite{Osin}.  By contrast, the
asymptotic behavior of the action of $G$ on $X$ even modulo a compact part can be
still quite complicated due to the exponential distortion of parabolic
subgroups in $X$.  In \cite{DOP}, Dal'bo-Otal-Peign\'e introduced a
parabolic gap property to resolve this issue. By definition, $G$ has a \textit{parabolic gap property} (PGP) if $\g
G > \g P$ for every maximal parabolic subgroup $P \in \mathcal P$. For a quotient $G\to \bar G$ we could also say that $\bar G$ has a \textit{PGP property} if $\g {\bar P}\le \g {\bar G}$ for the image  $\bar P$ of every $P\in \mathcal P$.

In \cite{DPPS}, Dal'bo-Peign\'e-Picaud-Sambusetti considered growth
tightness of a geometrically finite discrete group of isometries of a Cartan-Hadamard manifold. They generalized the co-compact case in
\cite{Sam3} and proved that $G$ is growth tight, provided that $G$
satisfies the parabolic gap property. Thus, we can also consider an analogue question
for growth gap with respect to the hyperbolic metric. Parallel to Theorem
\ref{ThmA}, the following theorem asserts the same phenomenon for growth
rates with respect to hyperbolic metric.

\begin{thm}\label{ThmB}
Suppose that the action of $G$ on $X$ is cusp-uniform with $\g G <
\infty$. Assume that $G$ satisfies the parabolic gap property. Then
there exists a sequence $\bar G_n$ of relatively hyperbolic proper
quotients of $G$ such that $$\lim_{n\to \infty} \g {\bar G_n} = \g
{G},
$$
and $\bar G_n$ has the parabolic gap property.

\end{thm}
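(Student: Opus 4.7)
The plan is to run a program parallel to the proof of Theorem \ref{ThmA}, but with the hyperbolic metric $d$ on $X$ replacing the word metric. Given $\epsilon > 0$, the goal is to construct a ``tree-like'' subset $T \subset G \cdot o$ whose growth rate with respect to $d$ is at least $\g{G} - \epsilon$, and which survives essentially intact under a suitable small cancellation quotient $\bar G_n = G/\nh$ associated to a well-chosen hyperbolic element $h \in G$ and $n \gg 0$.

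First, I would use the parabolic gap property together with the Patterson-Sullivan theory on the Bowditch boundary $\partial G$ of $\GP$. The relevant tool is the \emph{Partial Shadow Lemma} announced in the abstract, which gives sharp two-sided bounds for the Patterson-Sullivan measure of shadows of ``good'' orbit points, namely those $go$ whose relative geodesic $[o, go]$ does not spend disproportionate time inside horoballs. The hypothesis $\g G > \g P$ for every $P \in \mathcal P$ is crucial: it guarantees that the bulk of the exponential growth on scale $\g G$ is carried by such good elements, rather than by orbit points deep inside horoballs. From this I would extract, for each $\epsilon > 0$, a rooted geodesic tree $T$ in the Cayley graph (viewed in $X$ via $g \mapsto go$) with $\g T \geq \g G - \epsilon$, and with the additional property that transition points (entry/exit points of $(1, \epsilon)$-neighborhoods of horoballs along branches of $T$) are uniformly spaced with gap bounded below by some constant.

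Next, I would pick a hyperbolic element $h \in G$ whose translation axis lies away from horoballs, and for $n$ large enough form the quotient $\bar G_n = G/\nh$. By Osin's relative small cancellation theory, for such $n$ the quotient $\bar G_n$ is a relatively hyperbolic proper quotient of $\GP$. I would then show that the quotient map restricted to $T$ is injective on orbits and that, with respect to the quotient metric $\bar d$ on $\bar X = X/\nh$, the map $T \cdot o \to \bar G_n \cdot \bar o$ is biLipschitz with constants independent of $n$, once $n$ is large. The uniform spacing of transition points in $T$ is what makes this possible: it prevents any single surgery of length on the order of $\len(h^n)$ from shortening hyperbolic distances by more than a definite fraction. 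Consequently $\g{\bar G_n} \geq \g T - \epsilon' \geq \g G - \epsilon - \epsilon'$, while $\g{\bar G_n} \leq \g G$ is automatic. Letting $\epsilon, \epsilon' \to 0$ yields the desired sequence.

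The main obstacle, as compared with Theorem \ref{ThmA}, is the exponential distortion of parabolic subgroups in $(X, d)$: an orbit point $go$ lying deep in a horoball has hyperbolic distance from $o$ only logarithmic in $|g|_S$. Thus small cancellation surgery, whose cost is controlled by word length in the Cayley graph, can in principle collapse hyperbolic distances catastrophically, destroying the tree in the quotient. This is precisely where the parabolic gap property intervenes—it lets us discount the parabolic directions and concentrate the tree $T$ on directions where $d$ and $\dx$ are biLipschitz equivalent in a controlled way—and where the uniform spacing of transition points pays off, by forcing any identification created by the quotient to act uniformly across $T$ rather than at a single hot spot.
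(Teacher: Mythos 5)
Your high-level architecture matches the paper: build a large rooted geodesic tree in $X$ with uniformly spaced transition points using the Partial Shadow Lemma and the parabolic gap property, then kill a hyperbolic element and show the tree embeds in the quotient. You also correctly identify the essential obstacle (exponential distortion of parabolics in $X$) and the role of the parabolic gap in circumventing it. That said, two points need repair.

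First, your tree's transition points are taken relative to horoballs only (``entry/exit points of $(1,\epsilon)$-neighborhoods of horoballs''). This is not enough. The small cancellation quotient $G/\nh$ identifies elements that differ by products of conjugates of $h^n$; the obstruction to injectivity is a branch of the tree spending too long in a coset of $E(h)$, not in a horoball. The paper therefore works with transition points relative to the enlarged contracting system $\mathbb U \cup \mathbb Q$, where $\mathbb Q$ consists of $G$-translates of $\ax(h)$ (equivalently, it passes to the peripheral structure $\mathcal P \cup \{gE(h)g^{-1}\}$). Theorem \ref{LargeTreeX} is applied with this structure, and the injectivity criterion of Proposition \ref{injective}, fed by Lemma \ref{Subword}, is precisely that no branch of the tree $\epsilon$-contains a long $E(h)$-subsegment. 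Your construction, as stated, does not rule out branches that dive deeply into a translate of $\ax(h)$ while staying outside all horoballs, and such branches get collapsed. You need to add $\mathbb Q$ to the contracting system before constructing the tree.

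Second, the claim that ``the map $T\cdot o \to \bar G_n\cdot \bar o$ is biLipschitz with constants independent of $n$'' is both unjustified and unnecessary. The quotient map $\pi: X \to \bar X$ is $1$-Lipschitz by construction, and what the paper proves (via the rotating-family/Greendlinger machinery, not just generic relative small cancellation) is that $\pi$ is \emph{injective} on the orbit of the tree. Injectivity plus $1$-Lipschitz already gives
$$\sharp\{\bar y \in \pi(T\cdot o) : \bar d(\bar o,\bar y)\le n\} \ge \sharp\{x\in T\cdot o : d(o,x)\le n\},$$
hence $\g{\bar G_n}\ge \g{\mathcal T}$. A biLipschitz bound would require a lower bound $\bar d(\pi(x),\pi(y))\gtrsim d(x,y)$, which is a much stronger statement and not what the argument needs; asserting it would require separate justification which neither you nor the paper supplies.
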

\begin{rem}
It is possible that $\g G=\infty$, see Example 1 in \cite[Section
3.4]{GabPau}. We do not need the growth tightness of $G$ with
respect to $d$, although it appears to the author that the proof for
growth tightness in \cite{DPPS} is valid under
the general assumption of Theorem \ref{ThmB}.
\end{rem}
\begin{rem}
Recall that the growth rate is not a quasi-isometric invariant. Hence,
Theorem \ref{ThmB} cannot be deduced directly, even in the case of hyperbolic groups,
from Theorem \ref{ThmA}. On the other hand, the overall strategy in proving Theorem
\ref{ThmB} does share much similarity with that in Theorem \ref{ThmA}.
\end{rem}

The assumption of the ``parabolic gap property'' is sharp in the following sense:
\begin{prop}\label{ThmBSharp}
Suppose that the action of $G$ on $X$ is cusp-uniform with $\g G <
\infty$. If $G$ is of divergent type without parabolic gap property, then there  exist infinitely many  distinct relatively hyperbolic proper quotients $\bar G_n$ of $G$, without parabolic gap property,  such that $$\g {\bar G_n} = \g {G}.$$
\end{prop}
\begin{rem}
A divergent Schottky group $G$ without parabolic gap property was constructed by Peign\'e in \cite[Main Theorem]{Peigne}. By making use of \cite[Lemma 5.1]{DPPS} and technics therein one could prove Proposition \ref{ThmBSharp} for Peign\'e's examples. The novelty here is that the conclusion holds for every (not necessarily free) $G$ without parabolic gap property.
\end{rem}

It is well-known that a geometrically finite Kleinian group has the parabolic gap property, cf. \cite{DPPS}. Hence, we obtain the
following corollary, which settles a question of
Dal'bo-Peign\'e-Picaud-Sambusetti in \cite{DPPS} asking whether
there is a gap between $\g G$ and $\sup\{\g {\bar G}\}$ over all
proper quotients $\bar G$.
\begin{cor}
Let $G$ be a torsion-free geometrically finite Kleinian group.  Then $$\inf\{\g G -\g{\bar G}\} = 0$$ over all proper
quotients $\bar G$ of $G$.
\end{cor}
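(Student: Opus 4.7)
The plan is to deduce the corollary as a direct application of Theorem \ref{ThmB} to the hyperbolic space $X=\mathbb H^n$ with its standard metric $d$. Essentially all of the work is to verify that a torsion-free geometrically finite Kleinian group $G$ satisfies every hypothesis of Theorem \ref{ThmB}; once that is done, the conclusion $\inf\{\g G - \g{\bar G}\}=0$ follows immediately since Theorem \ref{ThmB} produces a sequence of proper quotients $\bar G_n$ with $\g{\bar G_n}\to \g G$.

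First I would recall the geometric setup. By definition, $G$ acts properly discontinuously and isometrically on the proper hyperbolic space $X=\mathbb H^n$. Geometric finiteness, in Marden's formulation, provides a $G$-invariant family $\mathbb U$ of pairwise disjoint open horoballs centered at the parabolic fixed points of $G$ such that $G$ acts cocompactly on the truncated space $X\setminus \bigcup_{U\in \mathbb U} U$; this is exactly the cusp-uniform condition of Definition \ref{RHdefn}. In particular $(G,\mathcal P)$ is relatively hyperbolic, where $\mathcal P$ is the collection of maximal parabolic subgroups.

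Next I would check the two numerical hypotheses of Theorem \ref{ThmB}. Finiteness of the critical exponent $\g G < \infty$ is classical for geometrically finite Kleinian groups: each maximal parabolic subgroup $P$ has polynomial orbital growth in $X$ and hence a finite critical exponent bounded by the rank of $P$, and the Beardon-Patterson-Sullivan theory yields $\g G \le n-1 < \infty$. The parabolic gap property $\g G > \g P$ for every maximal parabolic $P$ is likewise known in this setting; it is recalled in \cite{DPPS} and is the content of the remark preceding the corollary in the excerpt.

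With both hypotheses in hand, I would apply Theorem \ref{ThmB} to obtain a sequence of relatively hyperbolic proper quotients $\bar G_n$ of $G$ with $\g{\bar G_n}\to \g G$. Since $\bar G_n$ runs over proper quotients of $G$, this yields $\inf\{\g G-\g{\bar G}\} \le \lim_n (\g G-\g{\bar G_n})=0$, and combining with the growth-tightness type inequality $\g G \ge \g{\bar G}$ (which follows from $\bar G$ being a quotient acting on the quotient metric space) gives the equality $\inf\{\g G-\g{\bar G}\} = 0$. I do not expect any genuine obstacle here: the corollary is a packaging of Theorem \ref{ThmB} for the Kleinian setting, and the only thing to be careful about is citing the correct classical references for the cusp-uniform action, finiteness of $\g G$, and the parabolic gap property.
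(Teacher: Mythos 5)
Your proposal is correct and follows the same route as the paper: the paper's own justification is the remark immediately preceding the corollary, which simply observes that a geometrically finite Kleinian group (with constant curvature) satisfies the parabolic gap property (citing \cite{DPPS}) and hence Theorem~\ref{ThmB} applies. Your write-up merely spells out the standard facts behind that remark (cusp-uniform action on $\mathbb H^n$, finiteness of $\g G$, distance-nonincreasing quotient map), so it is an accurate elaboration of the paper's argument rather than a different approach.
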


\subsection{Patterson-Sullivan measures} We now describe
the approach in proving Theorems \ref{ThmA} and \ref{ThmB}. In
\cite{Coulon} and \cite{Erschler}, the typical approach consists of
two components: the automata theory of a hyperbolic group (or
equivalently, Cannon's theory about finiteness of cone types in
\cite{Cannon}), and small cancellation theory. Unfortunately, there
is no automata that can recognize the geodesic language of any
relatively hyperbolic group.

The tool that we find to replace the automata theory turns out to be
the theory of Patterson-Sullivan measures (or PS-measures for
shorthand). In fact, the idea of using PS-measures to study growth
problems dates back to works of Patterson \cite{Patt} and Sullivan
\cite{Sul}, see \cite[Theorem 9]{Sul} for example.

In \cite{Coor}, Coornaert has established the theory of PS-measures
on the limit set of a discrete group acting on a $\delta$-hyperbolic
space.  In a discrete group of isometries of a Cartan-Hadamard manifold, Dal'bo-Otal-Peign\'e used the parabolic gap property in
\cite{DOP} to deduce the divergence of $G$, and demonstrated that
this property can be very helpful to obtain a satisfactory theory of
PS-measures.  A significant part of our study is to further develop
their idea in a general setting.

Let $G$ be a relatively hyperbolic group with a finite generating
set $S$. The
\textit{Bowditch boundary}, denoted by $\pG$,  is the
(topological) Gromov boundary $\pX$ of $X$ in Definition
\ref{RHdefn}, since Bowditch \cite{Bow1} proved that $\pG$ is
independent of the choice of $X$. In \cite{Ge1}, Gerasimov proved
that $\pG$ can also be obtained as quotients of Floyd boundary. This gives a way to
compactify the Cayley graph $\Gx$ by $\pG$.  See Section \ref{Section2} for more discussion.

The main novelty of the present study is to construct PS-measures $\{\mu_v\}_{v\in G}$ on $\pG$ through the action of $G$ on $\Gx$ for studying the growth rate. It is well-known that the construction of PS-measures can be
performed for any proper group action on a geodesic metric space on the horofunction boundary, cf. \cite{BuMo}. The power of the PS-measure theory depends somehow how a geometric boundary under consideration is ``close'' to the horofunction boundary. For instance, the Gromov boundary of  a hyperbolic space differs up to a finite part to the horofunction boundary so that PS-measures is a quasi-conformal density. In particular, recall that it will become as a conformal density when the space is CAT(-1): this is the classical case considered by Patterson, Sullivan and many other authors.

In contrary to the hyperbolic case, our difficulty in the setting of Cayley graph to prove the quasi-conformal density of PS-measures on $\pG$ is the lack of a
well-defined notion of horofunctions at all boundary points in
$\pG$. We get around this issue by defining horofunctions only
at conical points and, at the same time, prove that PS-measures have no atoms.  We summarize the results about PS-measures as
follows. See Section \ref{Section4} for detail and precise definitions.
\begin{thm}[=Proposition \ref{PSMeasureG}]\label{ThmP} Let $\{\mu_v\}_{v\in G}$ be a PS-measure on $\pG$ constructed
through the action of $G$ on $\Gx$. Then $\{\mu_v\}_{v\in G}$ is a quasi-conformal
density without atoms. Moreover, they are unique and ergodic.
\end{thm}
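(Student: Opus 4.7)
The plan is to follow the classical Patterson--Sullivan framework, adapted to the relatively hyperbolic setting in which horofunctions are defined only at conical points of the Bowditch boundary $\pG$. Throughout I use the divergence hypothesis stated just before the theorem, which is essential for uniqueness and ergodicity. For each basepoint $v\in G$ and each $s>\g G$ I form the normalized orbital measure on $\Gx$,
$$\mu_{v,s} \;=\; \frac{1}{\mathcal{P}(s,o)} \sum_{g\in G} e^{-s\, \dx(v,go)}\,\dirac{go},$$
and pass to a weak-$*$ accumulation point $\mu_v$ as $s\to\g G^+$ inside the compactification $\Gx\cup\pG$. Divergence of the Poincar\'e series at $\g G$ forces the mass to escape every bounded subset of $\Gx$, so $\mu_v$ is supported on $\pG$.

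For quasi-conformality I would translate indices in the sum to reduce matters to controlling $\dx(v,go)-\dx(gv,go)$ along sequences $go$ converging to a boundary point $\xi$. When $\xi$ is a conical limit point this difference converges, up to bounded additive error, to the quasi-Busemann cocycle $\beta_\xi(v,gv)$ built in Section 2, yielding the transformation rule
$$\frac{d\mu_{gv}}{d\mu_v}(\xi) \;\asymp\; e^{-\g G\,\beta_\xi(v,gv)}$$
almost everywhere on the conical locus. To upgrade this to a quasi-conformal density on all of $\pG$ and simultaneously kill atoms, I would invoke the Partial Shadow Lemma: its two-sided estimates on $\mu_v$-measures of shadows of balls, combined with the parabolic gap $\g P < \g G$, feed a Borel--Cantelli computation showing that parabolic fixed points carry no mass and, more generally, that the non-conical locus is $\mu_v$-null. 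The same shadow estimates applied along an exhausting sequence of balls also preclude atoms at conical points.

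For uniqueness and ergodicity I would run the Hopf--Tsuji--Sullivan dichotomy. The Partial Shadow Lemma converts divergence of $\mathcal{P}(s,o)$ at $s=\g G$ into divergence of $\sum_g \mu_v(\text{shadow of }go)$, and Borel--Cantelli then shows that $\mu_v$-a.e.\ $\xi$ lies in infinitely many such shadows, giving conservativity of the $G$-action on $(\pG,\mu_v)$. Combining conservativity with the quasi-conformal transformation rule upgrades this to ergodicity of the diagonal action on $(\pG\times\pG,\,\mu_v\otimes\mu_v)$, and uniqueness follows because the Radon--Nikodym derivative of any two normalized $\g G$-quasi-conformal densities is $G$-invariant, hence a.e.\ constant. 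The main obstacle throughout is the absence of a globally defined horofunction on $\pG$: every step must be carried out either through the quasi-Busemann cocycle, which is controlled only at conical points, or through the shadow estimates supplied by the Partial Shadow Lemma, and the passage between these two viewpoints -- in particular, showing that parabolic fixed points are $\mu_v$-null -- is precisely where the parabolic gap property does the heavy technical lifting.
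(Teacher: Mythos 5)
Your overall architecture is reasonable, but there is one genuine gap and one notable divergence from the paper's route.

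The gap: you invoke the parabolic gap property $\g P < \g G$ to kill atoms at parabolic fixed points, and you call it the ``heavy technical lifting.'' But Theorem \ref{ThmP} concerns the \emph{word metric} on $\Gx$, where the parabolic gap property is neither assumed nor needed. What the paper actually uses is Lemma \ref{convpara}: since $\mu_1$ is supported on all of $\pG$ while $\Lambda(P)\subsetneq\pG$, the fundamental domain for the $P$-action on $\pG\setminus\Lambda(P)$ has positive mass, and unfolding the visual-density inequality over the $P$-orbit of that domain forces $\PS{P}$ to converge at $s=\g G$ --- no comparison of critical exponents is required. With that convergence in hand, the no-atom computation at a bounded parabolic point $q$ proceeds by projecting the compact fundamental domain $K$ to the coset $P$ and summing (Lemma \ref{parabnoatom}); this works because $G$ is \emph{automatically} divergent for the word metric, so no Patterson auxiliary function $H$ is needed. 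Your version, taken literally, would impose an extraneous hypothesis on the theorem; the parabolic gap property only becomes relevant in Proposition \ref{PSMeasureX}, for PS-measures constructed from the cusp-uniform action on $X$, where divergence of $G$ and convergence of $P$ at $\g G$ are not automatic.

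The divergence in route: for uniqueness and ergodicity you propose the Hopf--Tsuji--Sullivan dichotomy (shadow-series divergence $\Rightarrow$ conservativity $\Rightarrow$ ergodicity of the diagonal action, then $G$-invariance of Radon--Nikodym derivatives). The paper instead follows a McMullen-style argument (Lemmas \ref{Unique} and Proposition \ref{Ergodic}): using the two-sided estimates of Lemma \ref{ombredisks} on shrinking $\rho_1$-balls centered at conical points, one shows that $\mu_1$ is comparable to a Hausdorff--Carath\'eodory measure built from that ball family, via the $5r$-covering lemma, and both uniqueness and ergodicity drop out immediately since any $G$-invariant set of positive measure must have full measure in any comparable Hausdorff measure. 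Your HTS route can in principle be made to work here, but it is longer and requires establishing conservativity and then ergodicity of the diagonal action, neither of which the paper needs. Also minor: the no-atom/comparison steps only require the ordinary Shadow Lemma (Lemmas \ref{weakombreI}, \ref{weakombreII}) together with Lemma \ref{diskinombre}; the Partial Shadow Lemma \ref{PShadowLem} is introduced later to count partial cones and plays no role in Theorem \ref{ThmP}.
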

\begin{rem}
In \cite{YANG8} we carried out a detailed study of PS-measures constructed using a cusp-uniform action on $X$. Most results in this paper have an analogue there.  
\end{rem}

In the theory of PS-measures, a key tool is the Sullivan Shadow
Lemma, which connects the geometry inside and the measure on
boundary. In our setup of group completion $\Gx \cup \pG$, the
Sullivan Shadow Lemma is proved. In fact, we prove a variant of
the Shadow Lemma that holds for \textit{partial shadows}. Before stating
our result, we have to go into some technical parts of this paper.

Define $\mathbb P=\{gP: g \in G, P \in \mathcal{\bar P}\}$, where
$\mathcal{\bar P}$ is a prefered complete set of conjugacy representatives in
$\mathcal P$. For a path $p$ in $\Gx$, a point $v \in p$ is called
an \textit{$(\epsilon, R)$-transition} point for $\epsilon, R \ge 0$
if the $R$-neighborhood $p \cap B(v, R)$ around $v$ in $p$ is not
contained in the $\epsilon$-neighborhood of any $gP \in \mathbb P$.
The notion of transition points was due to Hruska in \cite{Hru},
and further elaborated on by Gerasimov-Potyagailo in \cite{GePo4}.

Hence, the \textit{partial shadow} $\Pi_{r,\epsilon, R}(g)$ at $g$
for $r \ge 0$ is the set of boundary points $\xi \in \pG$ such that
some geodesic $[1, \xi]$ intersects $B(g, r)$ and
contains an $(\epsilon, R)$-transition point $v$ in $B(g, 2R)$. We
prove the following variant of the Shadow Lemma. 
\begin{lem}[=Partial Shadow Lemma \ref{PShadowLem}]\label{PSLem}
There are constants $r_0,
\epsilon, R\ge 0$ such that the following holds
$$
\exp(-\g G d(1, g)) \prec \mu_1(\Pi_{r, \epsilon, R}(g)) \prec_r
\exp(-\g G d(1, g)) ,
$$
for any $g \in G$ and $r \ge r_0$.
\end{lem}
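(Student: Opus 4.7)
I would reduce both bounds to estimates on $\mu_g$ via the quasi-conformality of the PS-density $\{\mu_v\}_{v\in G}$ provided by Theorem \ref{ThmP}. Using the horofunction-type cocycle $\beta_\xi(1,g)$ that is defined at conical limit points (as developed in the paper's PS-measure section), the quasi-conformal density relation yields
$$\mu_1\bigl(\Pi_{r,\epsilon,R}(g)\bigr) \;=\; \int_{\Pi_{r,\epsilon,R}(g)} \exp\bigl(-\g G\,\beta_\xi(1,g)\bigr)\, d\mu_g(\xi).$$
The plan is then to prove two things: (i) on $\Pi_{r,\epsilon,R}(g)$ one has $\beta_\xi(1,g) = d(1,g) + O(1)$, with additive constant depending only on $r,\epsilon,R$; and (ii) the mass $\mu_g(\Pi_{r,\epsilon,R}(g))$ is squeezed between two positive constants independent of $g$, with the upper one independent of $r$ as well.

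Step (i) is the geometric heart of the argument. For $\xi\in\Pi_{r,\epsilon,R}(g)$ some geodesic $[1,\xi]$ passes within $r$ of $g$ and carries an $(\epsilon,R)$-transition point $v\in B(g,2R)$. At $v$ no peripheral coset in $\mathbb P$ can capture a window of radius $R$ of the geodesic, so the local geometry is quasi-hyperbolic. Combining this with the Morse/stability properties of transitional subsegments in relatively hyperbolic spaces (from Section~2), I would deduce that the Gromov product $(1\mid\xi)_g$ is uniformly bounded, which translates directly into the desired cocycle estimate $\beta_\xi(1,g) = d(1,g) + O(1)$. Choosing $r_0,\epsilon,R$ large enough in terms of the relative-hyperbolicity and transition constants makes these bounds uniform in $g$.

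For step (ii) the upper bound is easy: by the $G$-equivariance of a quasi-conformal density, $\|\mu_g\| = \|\mu_1\| < \infty$, so $\mu_g(\Pi_{r,\epsilon,R}(g)) \le \|\mu_1\|$ independently of both $g$ and $r$. The main obstacle I expect is the uniform positive lower bound $\mu_g(\Pi_{r,\epsilon,R}(g)) \ge c > 0$. Rewriting it as $\mu_1\bigl(g^{-1}\Pi_{r,\epsilon,R}(g)\bigr)$, I would describe the translate as those boundary points $\eta$ whose rays from $g^{-1}$ return to a neighborhood of $1$ through a transition point; its complement sits in an ``antishadow'' near the subsequential limits of $\{g^{-1}o\}$ in $\pG$, together with the extra region of rays that leave $g^{-1}$ toward $1$ through a peripheral coset. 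Using that $\mu_1$ is atomless (Theorem \ref{ThmP}), the compactness of $\pG$, and a pigeonhole over the finitely many conjugacy classes in $\mathcal P$, a contradiction argument --- along any sequence $g_n$ for which $\mu_{g_n}(\Pi)\to 0$, extract a subsequential limit of $g_n^{-1}o$ in $\pG$ and derive a $\mu_1$-atom there --- produces the uniform lower bound and closes the proof.
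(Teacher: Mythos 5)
Your reduction via the quasi-conformal cocycle and the shadow estimate $B_\xi(1,g)=d(1,g)+O(1)$ is sound, and your upper bound is fine (though simpler: $\Pi_{r,\epsilon,R}(g)\subset\Pi_r(g)$, and the ordinary Shadow Lemma \ref{ShadowLem} already gives $\mu_1(\Pi_r(g))\prec_r\exp(-\g G d(1,g))$ once one knows there are no atoms at parabolic points). The genuine gap is in the lower bound. Translating to $\mu_g(\Pi_{r,\epsilon,R}(g))=\mu_1(g^{-1}\Pi_{r,\epsilon,R}(g))$ and trying to extract a contradiction from a sequence $g_n$ with $\mu_{g_n}(\Pi)\to 0$ does not, by itself, produce an atom. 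The mass of $\pG\setminus g_n^{-1}\Pi_{r,\epsilon,R}(g_n)$ does \emph{not} concentrate at the subsequential limit $\eta$ of $g_n^{-1}$: it is shared between a small anti-shadow near $\eta$ and the ``bad'' set of directions $\xi$ for which $[g_n^{-1},\xi]$ passes through $B(1,r)$ but is $(\epsilon,R)$-deep in some peripheral coset there. This bad set is a union of shadows of orbit points lying deep along the finitely many cosets $hP$ near the center of the ball, and there is no reason for it to vanish in the limit without a quantitative input: its measure is controlled by the tail $\sum_{z\in hP,\,d(z,\cdot)>R_0}\exp(-\g G\,d(\cdot,z))$, and one needs this tail to be small. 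That smallness is exactly the content of Lemma \ref{convps}, which in turn rests on Lemma \ref{convpara} (the Poincar\'e series of any parabolic $P$ converges at $s=\g G$ because $\Lambda(P)\subsetneq\pG$). Your proposal never invokes this convergence, and without it the compactness/pigeonhole step has nothing to push against.

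To repair the argument you should follow the paper's covering scheme rather than a soft limit: for $\xi\in\Pi_r(g)\setminus\Pi_{r,\epsilon,R}(g)$ the point $g'\in[1,\xi]\cap B(g,r)$ is $(\epsilon,R)$-deep in one of the uniformly bounded number of cosets $hP$ meeting $B(g,r+\epsilon)$; the exit point $z$ of $[1,\xi]$ from $N_\epsilon(hP)$ is then a transition point (Lemma \ref{entrytrans}) at distance $>2R-2r$ from $g'$, so $\xi\in\Pi_r(z)$ and
$$
\mu_1\bigl(\Pi_r(g)\setminus\Pi_{r,\epsilon,R}(g)\bigr)\le\sum_{hP}\sum_{\substack{z\in N_\epsilon(hP)\\ d(z,g)>2R-2r}}\mu_1(\Pi_r(z))\prec\exp(-\g G\,d(1,g))\sum_{hP}\sum_{\substack{z\\ d(z,g)>2R-2r}}\exp(-\g G\,d(g,z)).
$$
Choosing $R$ large enough so that the inner sums are less than half the Shadow Lemma lower-bound constant closes the argument. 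This is where the quantitative choice of $R$ (depending on the tail of the parabolic Poincar\'e series) enters, and it cannot be replaced by a bare atomlessness-plus-compactness argument.
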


By Partial Shadow Lemma, we are able to prove a series of results about the
growth functions of balls and cones as we describe now.

The \textit{cone} $\Omega_r(g)$ at $g$ for $r\ge 0$ in $\Gx$ is the
set of elements $h \in G$ such that some geodesic $[1, h]$
intersects $B(g, r)$. The notion of a partial cone
$\Omega_{r, \epsilon, R}(g)$ at $g$ is defined similarly, by
demanding the existence of $(\epsilon, R)$-transition points on $[1,
h]$ $2R$-close to $g$. 

\begin{thm}[=Propositions \ref{ballgrowth}, \ref{PConeGrowthG}]\label{ThmG} Let $r,\epsilon, R\ge 0$ be given by
Lemma \ref{PSLem}.  Then the following holds.

\begin{enumerate}
\item
$\sharp N(1, n) \asymp \exp(n\g G)$ for any $n \ge 0$.
\item
$\sharp (\Omega_{r, \epsilon, R}(g)\cap N(g, n)) \asymp \exp(n\g
G)$ for any $g \in G$.
\item
In particular, $\sharp (\Omega_{r}(g) \cap N(g, n)) \asymp \exp(n\g
G)$ for any $g \in G$.
\end{enumerate}
\end{thm}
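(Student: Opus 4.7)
The plan is to derive all three estimates from the Partial Shadow Lemma \ref{PSLem} via a bounded-multiplicity covering argument in $\pG$, in the classical manner of Patterson--Sullivan theory. Write $A_n = \{g \in G : n \le d(1, g) < n+1\}$ for the $n$-th spherical shell. For the upper bound in (1), the geometric input I need is that the family $\{\Pi_{r,\epsilon,R}(g)\}_{g \in A_n}$ has uniformly bounded overlap in $\pG$: each $\xi \in \pG$ lies in at most $M$ of these partial shadows, with $M$ independent of $n$. This should follow because if $\xi \in \Pi_{r,\epsilon,R}(g)$, some $1$-to-$\xi$ geodesic carries an $(\epsilon, R)$-transition point in $B(g, 2R)$, and along any such geodesic the transition points at distance $\approx n$ from $1$ occupy a segment of bounded length, pinning $g$ to a ball of bounded radius. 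Granted this, integrating the overlap bound against $\mu_1$ and applying Lemma \ref{PSLem} yields
\[
M \ge \sum_{g \in A_n} \mu_1(\Pi_{r,\epsilon,R}(g)) \succ \sharp A_n \cdot \exp(-n \g G),
\]
so $\sharp A_n \prec \exp(n \g G)$, and summing in $n$ gives $\sharp N(1, n) \prec \exp(n \g G)$.

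For the lower bound, I need the partial shadows from $A_n$ to cover a uniformly positive $\mu_1$-mass. Theorem \ref{ThmP} enters here: since $\mu_1$ is non-atomic and quasi-conformal, standard arguments identify its essential support with the conical limit set, so a $\mu_1$-generic $\xi \in \pG$ is accessed by a geodesic from $1$ with $(\epsilon, R)$-transition points at arbitrarily late scales, placing it in $\Pi_{r,\epsilon,R}(g)$ for some $g \in A_n$ for every sufficiently large $n$. Hence $\mu_1(\bigcup_{g \in A_n} \Pi_{r,\epsilon,R}(g)) \ge c > 0$ uniformly in $n$, and
\[
c \le \sum_{g \in A_n} \mu_1(\Pi_{r,\epsilon,R}(g)) \prec \sharp A_n \cdot \exp(-n \g G),
\]
giving $\sharp A_n \succ \exp(n \g G)$ and completing (1).

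For (2), take $h \in \Omega_{r,\epsilon,R}(g) \cap N(g, n)$ with $d(g, h) \in [n, n+1)$; the defining geodesic from $1$ to $h$ passes through $B(g, r)$ with a transition point near $g$, forcing $d(1, h) = d(1, g) + d(g, h) + O(1)$. The partial shadows $\Pi_{r,\epsilon,R}(h)$ for such $h$ are contained in a slightly enlarged shadow $\Pi_{r', \epsilon, R}(g)$ and still enjoy bounded overlap by the same transition-point argument, now applied to geodesics emanating from a neighborhood of $g$. Summing Lemma \ref{PSLem} over these $h$ and comparing with the upper bound $\mu_1(\Pi_{r',\epsilon,R}(g)) \prec \exp(-\g G d(1,g))$ gives the upper bound in (2); the matching statement that every $\mu_1$-generic $\xi \in \Pi_{r,\epsilon,R}(g)$ also lies in some $\Pi_{r,\epsilon,R}(h)$ yields the lower bound. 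Part (3) is then immediate: $\Omega_{r,\epsilon,R}(g) \subseteq \Omega_r(g)$ inherits the lower bound from (2), while by $G$-invariance of $d$, $\sharp(\Omega_r(g) \cap N(g, n)) \le \sharp N(g, n) = \sharp N(1, n) \asymp \exp(n \g G)$ by (1).

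The main obstacle is justifying the two covering properties that power the whole scheme: uniform bounded multiplicity of the families of partial shadows, and uniformly positive $\mu_1$-mass for their union. Both depend on delicate geometric control of transition points in $\Gx$---their local uniqueness along a geodesic and the fact that conical limit points exhibit transitions at all sufficiently large scales---combined with the non-atomicity and effective support of $\mu_1$ furnished by Theorem \ref{ThmP}. In the absence of such control, $\mu_1$-mass could concentrate near parabolic fixed points where partial shadows degenerate, and the lower bounds would break down.
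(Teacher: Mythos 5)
Your proposal takes a genuinely different route from the paper's, but the route breaks down at the lower bounds.

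\textbf{Upper bound in (1).} You want bounded multiplicity for the family $\{\Pi_{r,\epsilon,R}(g)\}_{g\in A_n}$, and justify it by asserting that along a geodesic the $(\epsilon,R)$-transition points near level $n$ ``occupy a segment of bounded length.'' That assertion is false: a geodesic can contain arbitrarily long transitional stretches on which every point is $(\epsilon,R)$-transitional. The multiplicity bound you want does in fact hold, but for a different reason (once $\gamma$ has a transition point $v$ at level $\approx n$, Lemma \ref{floydtrans} forces $\rho_v(1,\xi)$ to be large, so by the Visibility Lemma \ref{karlssonlem} \emph{every} geodesic from $1$ to $\xi$ comes within a uniform distance of $v$; the rigidity is in the Floyd metric, not in the length of the transitional stretch). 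The paper sidesteps all of this by using strong shadows $\varPi_r(g)$ in Proposition \ref{ballgrowth}: for strong shadows one can fix a single geodesic $[1,\xi]$ once and for all, and any $g\in S^k$ with $\xi\in\varPi_r(g)$ then lies in $B(x,2r)$ where $x$ is the unique point of that geodesic at level $k$.

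\textbf{Lower bounds — this is the real gap.} You argue that a $\mu_1$-generic conical point $\xi$ lies in $\Pi_{r,\epsilon,R}(g)$ for some $g\in A_n$ for \emph{every} sufficiently large $n$, deducing $\mu_1\bigl(\bigcup_{g\in A_n}\Pi_{r,\epsilon,R}(g)\bigr)\ge c>0$. This does not follow. ``Transition points at arbitrarily late scales'' does not give a transition point \emph{near every scale}: a geodesic to a conical point can make arbitrarily long deep excursions into $\mathbb P$-cosets, and for a level $n$ interior to such an excursion there is no $(\epsilon,R)$-transition point within $B(g,2R)$ for any $g$ near the geodesic at that level. So the union of partial shadows over $A_n$ can have $\mu_1$-mass dropping arbitrarily close to $0$ along a subsequence of $n$'s, and your estimate breaks. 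The same defect undermines the covering step in your (2): you want to cover $\Pi_{r,\epsilon,R}(g)$ by partial shadows $\Pi_{r,\epsilon,R}(h)$, which again imposes a transition-point condition at the deeper level $h$ that need not be satisfiable. The paper's argument is deliberately asymmetric and avoids this: in Lemma \ref{ConeGrowthG} and Proposition \ref{PConeGrowthG} the (partial) shadow of $g$ is covered by \emph{ordinary} shadows $\Pi_r(h)$ for $h$ in the partial cone $\Omega_{r,\epsilon,R}(g,n,\Delta)$ — i.e.\ the transition-point requirement is kept near $g$, never re-imposed near $h$. Covering by ordinary shadows is automatic (any geodesic entering $B(g,r)$ has a point at every level $\ge d(1,g)$), and then the \emph{upper} bound of the Shadow Lemma for ordinary shadows does the counting. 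The Partial Shadow Lemma is invoked only once, to bound $\mu_1(\Pi_{r,\epsilon,R}(g))$ from below. You need to restore that asymmetry for the argument to close.

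Your part (3) and the reduction of the upper bound of (2) to (1) are fine as stated.
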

\begin{rem}
The statements (1) and (3) in hyperbolic case were proved in
\cite[Th\'eor\`eme 7.2]{Coor} and \cite[Lemma 4]{AL} respectively. The formula (1) is also an ingredient in  proving statistical hyperbolicity of a relatively hyperbolic group \cite{OYANG}. 

For cusp-uniform actions, it is shown in \cite{YANG8} that the purely exponential growth type as  (1), (2) and (3) is equivalent to a condition introduced by Dal'bo-Otal-Peign\'e, which is implied by the parabolic gap property. See Section \ref{Section7} for details.

\end{rem}

As a consequence of Theorem \ref{ThmG}.(3), the following extends a result of Bogopolski \cite[Lemma 3]{Bog} about dead-ends  to the relative hyperbolic case. See a definition for the dead-end depth in Subsection \ref{SSpartialconegrowth}. 
\begin{cor}[Dead-end depth is uniform]
The dead-end depth of  any $g\in G$ is at most $r$ where $r>0$ is given by Theorem \ref{ThmG}.
\end{cor}

Another result of independent interest is that there are finitely many partial
cone types $\Omega_{0, \epsilon, R}(g)$ for fixed $\epsilon, R$. See Lemma \ref{FinPCones}. This could be seen as an analogue of a theorem of Cannon \cite{Cannon} in hyperbolic groups. In \cite{PYANG}, we make a crucial use of this result  to construct a ``symmetric'' version of Theorem \ref{ThmC} below. \\

\paragraph{\textbf{Outline of proofs of Theorems \ref{ThmA} \&
\ref{ThmB}}}Let's now describe the strategy of the proof of Theorem \ref{ThmA}. The proof of
Theorem \ref{ThmB} is similar.  The central result in proving Theorem \ref{ThmA} is the
existence of a sequence of sufficiently large trees with uniformly
spaced transitions points. 

We say that a \textit{rooted geodesic tree} in a graph is a tree $\mathcal T$ with a distinguished vertex $o$ such that every path in $\mathcal T$ starting at $o$ is a geodesic in the graph.

\begin{thm}[=Theorem \ref{LargeTreeG}]\label{ThmC}
Suppose that $\GP$ is a relatively hyperbolic group with a finite
generating set $S$. Then there exist $\epsilon, R>0$ such that the
following holds.

For any $0 <\sigma < \g G$, there exist $r >0$ and a rooted geodesic tree
$\mathcal T$ at $1$ in $\Gx$ with the following properties.
\begin{enumerate}
\item
Let $\gamma$ be a geodesic in $\mathcal T$ with $\gamma_-=1$. For any $x \in \gamma$,
there exists an $(\epsilon, R)$-transition point $v$ in $\gamma$
such that $d(x, v)<r$.
\item
Let $\g{\mathcal T}$ be the growth rate of $\mathcal T$. Then
$\g{\mathcal T} \ge \sigma$.
\end{enumerate}
\end{thm}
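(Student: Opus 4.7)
The plan is to construct $\mathcal T$ recursively in blocks of length approximately $s$ for a large step-size $s$ chosen in terms of $\sigma$, using the partial cone estimate of Theorem \ref{ThmG}(2) to supply enough children at each stage. Fix $\epsilon, R$ and $r_0$ from the Partial Shadow Lemma \ref{PSLem}, and set $K := \sharp N(1, r_0)$, a uniform upper bound on the cardinality of $r_0$-balls in $\Gx$ coming from the proper action of $G$. The vertices of $\mathcal T$ will be organized in levels $V_0 = \{1\} \subset V_1 \subset V_2 \subset \cdots$; each $g \in V_n$ will carry a tree-path $\gamma_g$ from $1$ to $g$ that is a geodesic in $\Gx$ and that meets an $(\epsilon, R)$-transition point within $O(R + r_0)$ of every level vertex on it.

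For the inductive step, fix $g \in V_n$ with its tree-path $\gamma_g$. By Theorem \ref{ThmG}(2), the partial cone $\Omega_{r_0, \epsilon, R}(g) \cap N(g, s)$ contains at least $C \exp(s \g G)$ elements for a constant $C$ independent of $g$ and $s$; for each such $h$, the definition of partial cone supplies a witness geodesic $\alpha_h$ from $1$ to $h$ passing within $r_0$ of $g$ and carrying an $(\epsilon, R)$-transition point within $2R$ of $g$. Let $g_h^* \in \alpha_h$ realize the closest point on $\alpha_h$ to $g$, so $g_h^* \in N(g, r_0)$; since there are at most $K$ possible values for $g_h^*$, pigeonholing extracts a single $\tilde g \in N(g, r_0)$ together with a subfamily of $\ge (C/K)\exp(s \g G)$ of the $h$'s whose $\alpha_h$ all pass through $\tilde g$. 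Each such $h$ is then an honest descendant of $\tilde g$, i.e., $d(1,h) = d(1,\tilde g) + d(\tilde g, h)$, and the transition point on $\alpha_h$ near $g$ lies within $2R + r_0$ of $\tilde g$. These $h$'s become the level-$(n+1)$ vertices, branching from the common ancestor $\tilde g$, with tree-edges given by the tails of the $\alpha_h$ beyond $\tilde g$.

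Choosing $\sigma' \in (\sigma, \g G)$ and $s$ large enough that $(C/K)\exp(s \g G) \ge \exp(s \sigma')$ and $s\sigma'/(s+r_0) \ge \sigma$ (both routine since $\g G > \sigma$), the recurrence $\sharp V_{n+1} \ge \exp(s \sigma') \sharp V_n$ yields $\sharp V_n \ge \exp(ns\sigma')$, while $V_n \subset N(1, n(s+r_0))$, so $\g{\mathcal T} \ge \sigma' s/(s+r_0) \ge \sigma$. Setting $r := s + 2R + 2r_0$ validates condition (1), since along any tree-geodesic the transition points placed near successive level vertices are at most $s + r_0$ apart. The main technical obstacle I anticipate is the compatibility of the pigeonhole shift $g \mapsto \tilde g$ across levels: $\tilde g$ is the geometric branching point for the selected children, but $\tilde g$ need not lie on a geodesic extension of $\gamma_g$, so it cannot be inserted naively as an intermediate tree vertex while preserving geodesicity of tree-paths. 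Handling this requires a careful bookkeeping in which each level vertex is arranged, already at its placement, to be the shifted point that the next pigeonhole will identify; the uniform bound $K$ on local cardinalities ensures that the multiplicative loss per level is bounded regardless of these adjustments, and the gap $\g G > \sigma$ leaves ample margin to absorb it.
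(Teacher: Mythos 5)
Your high-level plan is the right one (use the partial-cone lower bound to supply children at each level, pigeonhole on the bounded $r_0$-ball to find a common branching point, and build the tree by concatenating geodesic segments), and you correctly identify the central difficulty: after the pigeonhole the branching point $\tilde g$ lives in $N(g, r_0)$ but need not be on the tree-path $\gamma_g$, so it cannot be spliced in as a tree vertex without destroying the geodesic-tree property. However, the ``fix'' you sketch for this obstacle is circular and does not resolve it. You propose to ``arrange, already at its placement, each level vertex to be the shifted point that the next pigeonhole will identify''; but the shifted point is determined precisely by examining the children and their witness geodesics, which presupposes knowing the vertex. You then argue that the multiplicative loss per level is bounded by $K$, but this controls only the cardinality loss, not the structural problem: the partial-cone estimate of Proposition \ref{PConeGrowthG} guarantees $\sharp(\Omega_{r_0,\epsilon,R}(g)\cap A(g,n,\Delta))\succ \exp(n\g G)$ for any $g$, but the elements it counts have witness geodesics passing through $B(g,r_0)$, not through $g$ itself. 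Since the strict partial cone $\Omega_{0,\epsilon,R}(g)$ can be small for some $g$ (think of dead ends), you cannot simply restrict to children whose geodesics actually pass through the current vertex, and you have given no mechanism to select level-$n$ vertices that are guaranteed to have large strict cones while remaining on the already-built tree-paths.

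The paper closes this gap with a genuinely different and essential ingredient: following Coulon, it first constructs a ``stable set'' $\hat G\ni 1$ by iteration (the decreasing sequence $G_{i,n}$ of Lemma \ref{PreTreeSetG}, with $\hat G=\bigcap_i G_{i,n}$ in Lemma \ref{TreeSetG}) that is self-sustaining: every $g\in\hat G$ already has $\gtrsim\exp(n\g G)$ children lying again inside $\hat G$ and inside its \emph{strict} partial lexi-cone, so no shift occurs at tree-building time. The pigeonhole (your $g\mapsto\tilde g$) is confined entirely to the construction of $\hat G$ and to showing it is nonempty and dense, and does not interact with the recursion that builds $\mathcal T$. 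The paper also uses canonical lexi-geodesics $\omega_h$ rather than arbitrary witness geodesics $\alpha_h$; the prefix-closure property (if $g\in\omega_h$ then $\omega_g=[1,g]_{\omega_h}$) gives the tree structure for free and removes a second ambiguity in your proposal, namely that different children's $\alpha_h$ may give incompatible initial segments through the branching point, forcing one to invoke Lemma \ref{PairTransG} again to repair the transition-point estimate on the actual tree-paths. In short, you have the correct reduction but the crucial fixed-point construction of $\hat G$ is missing, and the hand-waving about bookkeeping does not substitute for it.
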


\begin{rem}
We remark that if
$\sigma$ is closer to $\g G$, then $r$ is bigger. Consequently,
transition points get sparser, and thus geodesics in $\mathcal T$ get
wilder and less controlled.
\end{rem}

The other ingredient in proving Theorem \ref{ThmA} is small
cancellation theory. The idea then is to embed sufficiently large trees
given by Theorem \ref{ThmC} into small cancellation quotients
constructed as follows.

Fix a hyperbolic element $h \in G$, and denote by $E(h)$
the maximal elementary subgroup in $G$ containing $h$. We consider the quotients $G \to \bar G:= G/\nh$ for certain sufficiently large $n$. The general philosophy behind small cancellation theory is that any element in $\bar G$ cannot contain ``major'' parts of elements in $E(h)$. Such type of results are achieved here by exploiting the theory of rotating family recently developed by Dahmani-Guirardel-Osin in \cite{DGO}.  In Proposition \ref{injective}, we identify a subset of elements in $G$ embeddable into $\bar G$ such that these elements cannot contain sufficiently deep elements in $E(h)$. 

On the other hand, since $G$ is hyperbolic relative to $\mathcal P \cup \{gE(h): g\in G\}$, we could apply Theorem \ref{ThmC} to this new peripheral structure and obtain a sequence of geodesic trees $\mathcal T$ such that $\g {\mathcal T} \to \g G$ and $\mathcal T$ contain ``many" transition points, not deep in $E(h)$. By Proposition \ref{injective}, these transition points force $\mathcal T$ to be injecting into $\bar G$ for $n\gg 0$.  So $ \g{\bar G} \ge \g {\mathcal T}$ and  $\g{\bar G} \to \g G$. This proves  Theorem \ref{ThmA}. 
Their detailed proofs can be found in Section \ref{Section9}.

\subsection{Organization of paper}
In Section \ref{Section2},  we discuss a notion of transition points with respect to a contracting system. Several preliminary results are proved for conical points and horofunctions. Section \ref{Section3} then gives an abstract formulation of the quasi-conformal density and describes the construction of PS-measures. The key notions are partial shadows and cones in this paper.

Sections  \ref{Section4} \& \ref{Section5}  focus on the PS-measures on the compactification $\Gx
\cup \pG$ and use them to derive results about growth of balls and
cones in Cayley graphs. Theorems \ref{ThmP}, \ref{ThmG} and
\ref{ThmC} are proved.

The aim of Section \ref{Section6} is to link the action of $G$ on $\Gx$ to the
action of $G$ on $X$, via \textit{lift} operations. Section \ref{Section7} is devoted  to  an analogue of
Theorem \ref{ThmC} for the
cusp-uniform action of $G$ on $X$.

Section \ref{Section8} studies the small cancellation over hyperbolic elements of
high power. With all ingredients prepared,  Theorems \ref{ThmA} and \ref{ThmB} are proved in Section \ref{Section9}.

We remark that the results and technics in this paper have found applications in the forth-coming papers \cite{OYANG}, \cite{PYANG}. 

\ack The author would like to thank Anna Erschler for bringing to
his attention the reference \cite{Erschler}, which initiated the
study of this paper.  Thanks also go to Leonid Potyagailo for
suggesting the sufficient part of Lemma \ref{charconical} and pointing out the example in
\cite{GabPau}, and Fran\c{c}ois Dahmani for helpful conversations.

\section{Preliminaries}\label{Section2}

\subsection{Notations and Conventions}

Let $(X, d)$ be a geodesic metric space. We collect some notations
and conventions used globally in the paper.
\begin{enumerate}
\item
$B_d(x, r) := \{y: d(x, y) \le r\}$. The index $d$, if understood, will be omitted.
\item
$N_r(A): = \{y \in X: d(y, A) \le r \}$ for a subset $A$ in $X$.
\item
$\diam {A}$ denotes the diameter of $A$ with respect to $d$.
\item
We always consider a rectifiable path $p$ in $X$ with arc-length parametrization.  Denote by $\len (p)$ the length
of $p$.  Then $p$ goes from  the initial endpoint $p_-$ to the terminal endpoint $p_+$.  Let $x, y \in p$ be two points which are given by parametrization. Then denote by $[x,y]_p$ the parametrized
subpath of $p$ going from $x$ to $y$.
\item
Given a property (P), a point $z$ on $p$ is
called the \textit{first point} satisfying (P) if $z$ is among the
points $w$ on $p$ with the property (P) such that $\len([p_-, w]_p)$
is minimal. The \textit{last point} satisfying (P) is defined analogously.

\item
For $x, y \in X$, denote by $[x, y]$ a geodesic $p$ in $X$ with
$p_-=x, p_+=y$. Note that the geodesic between two points is usually
not unique. But the ambiguity of $[x, y]$ is usually made clear or
does not matter in the context.

\item
Let $p$ a path and $Y$ be a closed subset in $X$ such that $p \cap Y
\neq \emptyset$. So the \textit{entry} and \textit{exit} points of
$p$ in $Y$ are defined to be the first and last points $z$ in $p$
respectively such that $z$ lies in $Y$.


\item
Let $f, g$ be two real-valued functions with domain understood in
the context. Then $f \prec_{c_1, c_2, \cdots, c_n} g$ means that
there is a constant $C >0$ depending on parameters $c_i$ such that
$f < Cg$.  And $f \succ_{c_1, c_2, \cdots, c_n} g,  f \asymp_{c_1,
c_2, \cdots, c_n} g$ are used in a similar way. For simplicity, we
omit $c_i$ if they are uniform.
\end{enumerate}

Recall that in a geodesic triangle, two points $x, y$ in sides $p, q$ respectively are
called \textit{congruent} if $d(x, o)=d(y, o)$ where $o$ is the
common endpoint of $p$ and $q$. Define the Gromov product $(x, y)_z=(d(x, z)+d(y, z)-d(x, y))/2$ for $x, y, z\in X$.

We make use of the following definition of hyperbolic spaces in the sense of Gromov.
\begin{defn}\label{thintri}
A geodesic space $(X, d)$ is called \textit{$\delta$-hyperbolic} for $\delta \ge 0$ if any geodesic triangle is \textit{$\delta$-thin}: let $p, q$ be any two sides such that $o:=p_-=q_-$. Then a point $x$ in $p$ such that $d(x, p_-)\le (p_+,q_+)_o$ is $\delta$-close to a congruent point in $q$.
\end{defn}

\subsection{Contracting systems}Some material is borrowed from \cite{YANG6}.
The terminology of a contracting set is due to Bestvina-Fujiwara in
\cite{BF2}. We also refer to \cite{Sisto} for an alternative
formulation of contracting sets.

Given a subset $Y$ in $X$,  the projection $\proj_Y(x)$ of a point
$x$ to $Y$ is the set of the nearest points in the closure of $Y$ to
$x$. Then for $A \subset X$ define $\proj_Y(A) = \cup_{a\in A
}\proj_Y(a)$.

\begin{defn}\label{contractdefn}
Let $\tau, D>0$. A subset $Y$ is called \textit{$(\tau,
D)$-contracting} in $X$ if the following holds
$$\diam{\proj_Y(\gamma)} < D$$
for any geodesic $\gamma$ in $X$ with $N_\tau(Y) \cap \gamma =
\emptyset$.

A collection of $(\tau, D)$-contracting subsets is referred to as a
$(\tau, D)$-\textit{contracting system}. The constants $\tau, D$
will often be  omitted, if understood. \end{defn}

We now recall some useful properties of contracting sets. The proof of the following lemma is straightforward by considering projections and left to the interested reader. 

\begin{lem}\label{contracting}
Let $Y$ be a contracting set in $X$. Then the following holds.
\begin{enumerate}
\item
For any $r \ge 0$, there exists
$\epsilon=\epsilon(r)>0$ with the following property.  If $\gamma$ is a geodesic in $X$ such that
$d(\gamma_-, Y), d(\gamma_+, Y) < r$, then $\gamma \subset N_\epsilon(Y)$.
\item
There exists a constant $M>0$ such that for any geodesic $\gamma$
with $\gamma_- \in Y$, we have $d(\proj_Y(\gamma_+),  \gamma) \le
M$.

\item
There exists a constant $M>0$ such that $\diam{\proj_Y(\gamma)} \le
\len(\gamma) +M$ for any geodesic $\gamma$ in $X$.
\end{enumerate}
\end{lem}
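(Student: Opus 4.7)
The plan is to handle the three assertions in turn, using in each case a decomposition of $\gamma$ into subsegments that lie inside or outside the $\tau$-neighbourhood $N_\tau(Y)$, combining the contracting hypothesis on the outside pieces with the triangle inequality on the inside pieces.

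For (1), I would fix an arbitrary maximal subsegment $[a,b]\subset \gamma$ that is disjoint from $N_\tau(Y)$. Each endpoint $a,b$ is either an endpoint of $\gamma$ (hence within $r$ of $Y$) or lies on $\partial N_\tau(Y)$ (hence within $\tau$ of $Y$); in either case $d(a,Y),d(b,Y)\le \max(r,\tau)$. The contracting property yields $\diam{\proj_Y([a,b])}<D$, and the inequality $d(a,b)\le d(a,\proj_Y(a))+\diam{\proj_Y([a,b])}+d(\proj_Y(b),b)$ forces $\len([a,b])<2\max(r,\tau)+D$. Every point of $\gamma$ lies either in $N_\tau(Y)$ itself or on such a subsegment, so in either case within $\epsilon:=3\max(r,\tau)+D$ of $Y$.

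For (2), let $v$ be the last point on $\gamma$ at which $d(v,Y)\le \tau$; this exists because $\gamma_-\in Y$. If $v=\gamma_+$ the conclusion is immediate, so assume $v$ strictly precedes $\gamma_+$. For any $w\in [v,\gamma_+]_\gamma$ strictly past $v$ the subsegment $[w,\gamma_+]_\gamma$ avoids $N_\tau(Y)$, hence $\diam{\proj_Y([w,\gamma_+]_\gamma)}<D$. Since $d(w,Y)\le d(w,v)+\tau$, the triangle inequality yields $d(p,v)\le D+\tau+2\,d(w,v)$ for every $p\in \proj_Y(\gamma_+)$. Letting $w\to v$ one obtains $d(p,v)\le D+\tau$, proving (2) with $M:=D+\tau$.

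For (3), pick $p\in \proj_Y(x)$ and $q\in \proj_Y(y)$ with $x,y\in \gamma$, $x$ preceding $y$. If $[x,y]_\gamma$ is disjoint from $N_\tau(Y)$ the contracting property gives $d(p,q)<D$ directly. Otherwise, let $u,v$ be the entry and exit points of $[x,y]_\gamma$ in $N_\tau(Y)$. The argument for (2) applied to $[x,u]_\gamma$ and (in reverse) to $[v,y]_\gamma$ bounds $d(p,\proj_Y(u))$ and $d(q,\proj_Y(v))$ each by $D+\tau$. Since $u,v\in N_\tau(Y)$ and $d(u,v)\le \len([x,y]_\gamma)\le \len(\gamma)$, a triangle inequality delivers $d(p,q)\le \len(\gamma)+2D+4\tau$, so $M:=2D+4\tau$ works. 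The only subtle point throughout is the open/closed distinction for $N_\tau(Y)$: a point of $\partial N_\tau(Y)$ must be perturbed slightly outward before the contracting hypothesis can be invoked, which is exactly what the limit $w\to v$ accomplishes in (2). I do not expect any serious obstacle beyond this, consistent with the author's remark that the proof is straightforward.
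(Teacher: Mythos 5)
The paper itself supplies no proof here --- it is explicitly ``left to the interested reader'' --- so there is nothing to compare against. Your argument is a correct way to fill the gap, and it is at the level of detail the author evidently had in mind: decompose $\gamma$ into pieces inside and outside $N_\tau(Y)$, contract the outside pieces, and use the triangle inequality through projection points.

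One small imprecision in part (3): the argument from (2) bounds $d(p,u)$, i.e.\ the distance from $p\in\proj_Y(x)$ to the \emph{point} $u$ on the geodesic, not directly $d(p,\proj_Y(u))$; passing from $u$ to a point of $\proj_Y(u)$ costs an additional $d(u,\proj_Y(u))\le\tau$. Carrying this through the same limiting argument one gets $d(p,\proj_Y(u))\le D+2\tau$ rather than $D+\tau$, and then the final estimate reads $d(p,q)\le \len(\gamma)+2D+6\tau$. The discrepancy is purely in the constant and does not affect the statement. One should also note, though you implicitly cover it, that if $\gamma$ never meets $N_\tau(Y)$ then $\gamma$ itself is the single maximal subsegment of part (1) and the same estimate applies directly.
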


\begin{lem}\label{bipbpp}\cite[Lemma 2.7]{YANG6}
Let $\mathbb Y$ be a contracting system. Then the following two
properties are equivalent.
\begin{enumerate}
\item
\mbox{(bounded intersection property)} For any $\epsilon >0$
there exists $R =R(\epsilon)>0$ such that
$$
\diam{N_\epsilon(Y) \cap N_\epsilon(Y')} < R$$ for any two distinct
$Y, Y' \in \mathbb Y$.
\item\mbox{(bounded projection property)} There exists a finite number
$D>0$ such that
$$\diam{\proj_Y(Y')} < D$$
for any two distinct $Y, Y' \in \mathbb Y$.
\end{enumerate}
\end{lem}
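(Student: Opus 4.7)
The plan is to prove the two implications separately, each by a short argument that uses only the contracting property together with Lemma \ref{contracting}.

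For (2) $\Rightarrow$ (1), I would suppose bounded projection holds with constant $D$, and take $x, y \in N_\epsilon(Y) \cap N_\epsilon(Y')$. Choose $x_Y, y_Y \in Y$ and $x', y' \in Y'$ realizing the $\epsilon$-closeness, and observe that $x', y'$ then lie within $2\epsilon$ of $Y$; hence any points in $\proj_Y(x'), \proj_Y(y')$ lie within $4\epsilon$ of $x_Y, y_Y$ respectively. Since $Y \neq Y'$, the bounded projection property forces $\proj_Y(x'), \proj_Y(y')$ to be within $D$ of each other, and the triangle inequality gives $d(x, y) \le D + O(\epsilon)$. Setting $R(\epsilon) := D + 10\epsilon$ yields the bounded intersection property.

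For (1) $\Rightarrow$ (2), fix $y_1, y_2 \in \proj_Y(Y')$ with $y_1', y_2' \in Y'$ such that $y_i \in \proj_Y(y_i')$, and consider the geodesic $\beta = [y_1', y_2']$. Since its endpoints lie on $Y'$, Lemma \ref{contracting}(1) places $\beta$ in some uniform neighborhood $N_{\epsilon_0}(Y')$. If $\beta \cap N_\tau(Y) = \emptyset$, the contracting property directly bounds $\diam{\proj_Y(\beta)}$, and this set contains both $y_1$ and $y_2$, so we are done. Otherwise, let $u$ and $v$ be the entry and exit points of $\beta$ into $N_\tau(Y)$; both lie in $N_{\epsilon_1}(Y) \cap N_{\epsilon_1}(Y')$ for $\epsilon_1 = \max(\tau, \epsilon_0)$, so bounded intersection gives $d(u,v) \le R(\epsilon_1)$. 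The outer subpaths $[y_1', u]_\beta$ and $[v, y_2']_\beta$ avoid $N_\tau(Y)$ except at $u$ and $v$, so applying the contracting property (after a short retraction off $u, v$ to keep the subpath strictly outside $N_\tau(Y)$, and then taking a limit) bounds $d(y_1, u)$ and $d(y_2, v)$ by $D' + \tau + O(1)$. Combining with $d(u,v) \le R(\epsilon_1)$ controls $d(y_1, y_2)$.

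The only delicate step is the contracting estimate at $u$ and $v$, where the subpath touches the boundary of $N_\tau(Y)$; the workaround is to apply the contracting definition to a slightly shortened subpath strictly outside $N_\tau(Y)$ and pass to the limit, using that projections move in a controlled way as the endpoint converges to $u$ (or $v$). I expect this to be routine given the formulation of contracting in Definition \ref{contractdefn}.
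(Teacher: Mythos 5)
The paper does not prove this lemma itself; it imports it by citation from \cite[Lemma 2.7]{YANG6}. Your argument is correct and is the standard proof. In the direction (2)~$\Rightarrow$~(1) your bookkeeping yields $d(x,z)\le 5\epsilon$ for $z\in\proj_Y(x')$, hence $d(x,y)\le D+10\epsilon$, exactly as you claim. In the direction (1)~$\Rightarrow$~(2) the case split on whether $[y_1',y_2']$ meets $N_\tau(Y)$ is right: when it does not, $y_1,y_2\in\proj_Y(\beta)$ and contraction finishes immediately; when it does, $u,v\in N_{\max(\tau,\epsilon_0)}(Y)\cap N_{\max(\tau,\epsilon_0)}(Y')$ so bounded intersection controls the middle, and for the outer pieces the limit argument you sketch does work cleanly: for $u_n\to u$ along $[y_1',u)_\beta$, one has $\diam{\proj_Y([y_1',u_n]_\beta)}<D$, while $d(u_n,\proj_Y(u_n))=d(u_n,Y)\le d(u_n,u)+\tau$, so $d(y_1,u)\le D+\tau$ in the limit, giving $\diam{\proj_Y(Y')}\le 2D+2\tau+R(\max(\tau,\epsilon_0))$. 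One small point worth noting explicitly: in (1)~$\Rightarrow$~(2) you need the contracting constants $(\tau,D)$ to be uniform over $\mathbb Y$ and you also apply Lemma~\ref{contracting}(1) to $Y'$, so you implicitly use that $\mathbb Y$ is a $(\tau,D)$-contracting system (all members contracting with the same constants), which is indeed how Definition~\ref{contractdefn} is set up.
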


We describe a typical setting which gives rise to a contracting
system with the bounded intersection (and projection) property. All
examples in this paper are of this sort. The proof of the following lemma is
straightforward.
\begin{lem}\label{contractingsystem}
Suppose that a group $G$ acts properly and isometrically on a
geodesic metric space $X$. Let $\mathbb Y$ be a $G$-invariant
contracting system such that $\sharp (\mathbb Y/G) < \infty$ and for
each $Y \in \mathbb Y$ the stabilizer $G_Y$ of $Y$ in $G$ acts
co-compactly on $Y$. Assume that $\diam{N_r(Y) \cap N_r(Y')} <\infty$
for any $r>0$ and all $Y \neq Y' \in \mathbb Y$.  Then $\mathbb Y$
has the bounded intersection property.
\end{lem}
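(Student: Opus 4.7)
My plan is to argue by contradiction, using the finiteness of $\mathbb Y/G$ and the cocompactness of stabilizers to reduce an unbounded family of intersections to a single fixed pair, where the finite-diameter hypothesis then fires.

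First I would fix $\epsilon > 0$ and suppose, toward a contradiction, that no uniform bound $R(\epsilon)$ exists. Then there are sequences of distinct pairs $(Y_n, Y_n') \in \mathbb Y \times \mathbb Y$ and points $u_n, v_n \in N_\epsilon(Y_n) \cap N_\epsilon(Y_n')$ with $d(u_n, v_n) \to \infty$. Since $\sharp(\mathbb Y / G) < \infty$, after passing to a subsequence and translating each pair by a suitable element of $G$, I may assume $Y_n = Y_0$ is a fixed element of $\mathbb Y$ for every $n$. The problem is now to control the sequence $Y_n'$.

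Next, I would center the configuration using the cocompact action of $G_{Y_0}$ on $Y_0$. Pick $y_n \in Y_0$ with $d(u_n, y_n) \le \epsilon$ and choose $h_n \in G_{Y_0}$ so that $h_n y_n$ lies in a fixed compact $K \subset Y_0$; then $h_n u_n$ stays in the compact set $N_\epsilon(K)$. Replacing $(Y_0, Y_n')$ by $(Y_0, h_n Y_n')$, and relabeling $Y_n'' := h_n Y_n'$, the intersection $N_\epsilon(Y_0) \cap N_\epsilon(Y_n'')$ still has diameter tending to infinity while containing the bounded sequence $h_n u_n$.

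Now I would show $Y_n''$ is eventually constant. By finiteness of $\mathbb Y/G$, along a subsequence write $Y_n'' = f_n Z_0$ for a fixed $Z_0 \in \mathbb Y$ and $f_n \in G$. Pick $w_n \in Y_n''$ with $d(w_n, h_n u_n) \le \epsilon$, so $w_n$ lies in a fixed bounded set $B$. Write $w_n = f_n z_n$ with $z_n \in Z_0$, and use cocompactness of $G_{Z_0}$ on $Z_0$ to choose $k_n \in G_{Z_0}$ with $k_n^{-1} z_n \in K_0$ for a fixed compact $K_0 \subset Z_0$. Then $f_n k_n$ sends the compact set $K_0$ into the bounded set $B$, so by properness of the $G$-action on $X$ the collection $\{f_n k_n\}$ is finite. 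Along a further subsequence $f_n k_n = f$ is constant; since $k_n \in G_{Z_0}$ stabilizes $Z_0$, we get $Y_n'' = f_n k_n Z_0 = f Z_0$ for all $n$. Because $Y_n \neq Y_n'$ throughout, $Y_0 \neq fZ_0$, and the unbounded diameter of $N_\epsilon(Y_0) \cap N_\epsilon(fZ_0)$ contradicts the hypothesis.

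The main obstacle, such as it is, lies in the centering step: one must simultaneously use the stabilizer $G_{Y_0}$ to tame $u_n$ and the stabilizer $G_{Z_0}$ plus properness to tame $f_n$, taking care that the modifications by $k_n \in G_{Z_0}$ do not alter the orbit $f_n Z_0$. Once that is arranged, the rest is straightforward subsequence extraction.
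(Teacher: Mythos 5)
Your proof is correct. It is worth noting, though, that it takes a genuinely different route from the one the paper has in mind. The paper's intended argument (visible in a commented-out sketch in the source) proves the equivalent \emph{bounded projection} property from Lemma~\ref{bipbpp}: fix $Y_0$, use the contracting property to bound $\diam{\proj_{Y_0}(Y)}$ by $D$ whenever $Y$ stays outside $N_{\tau+\sigma+1}(Y_0)$, then use properness and cocompactness to reduce the remaining cases to finitely many $G_{Y_0}$-orbits of $Y$, each handled by the finite-intersection hypothesis. Your argument instead proves the bounded intersection property directly by contradiction, reducing an unbounded family $(Y_n, Y_n')$ to a single fixed pair $(Y_0, fZ_0)$ through two centering steps (one via $G_{Y_0}$, one via $G_{Z_0}$ combined with properness). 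Notably, your proof never uses the contracting property of the sets in $\mathbb Y$ at all --- it relies only on finiteness of $\mathbb Y/G$, cocompactness of stabilizers, properness of the action, and the finite-diameter hypothesis --- so it actually establishes the conclusion for an arbitrary $G$-invariant family satisfying those hypotheses, not just for contracting systems. That is a mild gain in generality at the cost of not producing the explicit constant that the paper's projection argument yields in the ``far'' regime. One cosmetic imprecision: you say $f_n k_n$ ``sends the compact set $K_0$ into the bounded set $B$''; it sends the single point $k_n^{-1}z_n \in K_0$ to $w_n \in B$, i.e.\ $f_n k_n K_0 \cap B \neq \emptyset$, which is what properness needs --- the conclusion is unaffected. (Both your proof and the paper's sketch tacitly treat the bounded set as relatively compact, which is harmless in the paper's intended applications where $X$ is proper.)
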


\begin{rem}
Suppose that $X$ is a hyperbolic space or the Cayley graph of a
relatively hyperbolic group $G$. The condition``$\diam{N_r(Y) \cap
N_r(Y')} <\infty$" is satisfied if the limit set of $Y$ is disjoint
with that of $Y'$.
\end{rem}

In the following definition, we state an abstract formulation of a
notion of transition points, which was introduced by
Hruska \cite{Hru}  originally in the setting of relatively hyperbolic groups.
\begin{defn}
Let $\mathbb Y$ be a contracting system with the bounded intersection property in
$X$. Fix $\epsilon, R>0$.  Given a path $\gamma$, we say that a point $v$ in $\gamma$ is called \textit{$(\epsilon,
R)$-deep} in $Y \in \mathbb Y$ if it holds that $\gamma \cap B(v, R) \subset
N_\epsilon(Y).$ If $v$ is not $(\epsilon, R)$-deep in any $Y \in
\mathbb Y$, then $v$ is called an \textit{$(\epsilon, R)$-transition
point} in $\gamma$.
\end{defn}
\begin{rem}
The above definition differs a little bit with the one in \cite{Hru}, for which $v$ is required to be $R$-apart from the endpoints of $\gamma$. One of reasons was possibly that $v$ will then be $(\epsilon, R)$-deep in a \textbf{unique} $Y\in \mathbb Y$ for $R>\mathcal R(\epsilon)$. Here, the same uniqueness property holds for $v$ whenever the length of $\gamma$ is bigger than $\mathcal R(\epsilon)$. 
\end{rem}

\begin{example}\label{contractingsystem} In the paper, we shall consider transition points in
the following setups.
\begin{enumerate}
\item
Let $\GP$ be a relatively hyperbolic group with a finite generating
set $S$. Then the collection $\mathbb P$ of peripheral cosets is
a contracting system with the bounded intersection property in $\Gx$, cf.
\cite{DruSapir}, \cite{GePo4}.

\item
Let $G$ acts properly on a hyperbolic space $X$. Assume that there
exists a collection of horoballs $\mathbb U$ in $X$ such that the action of $G$
on $X\setminus \cup_{U \in \mathbb U} U$ is co-compact. Then
$\mathbb U$ is a contracting system with the bounded intersection property.

\item
We could also consider an ``extended" relative hyperbolic structure of $G$,
which is obtained by adjoining into $\mathcal P$ a collection of
subgroups $\mathcal E$. This can be done in the following way. Let $h
\in G$ be a hyperbolic element. Denote by $E(h)$ the stabilizer in
$G$ of the fixed points of $h$ in $\pX$. Then $\mathcal E = \{g
E(h)g^{-1}: g \in G\}$ gives such an example. See \cite{Osin2} for more
detail.

Let $C(E)$ denotes the convex hull in $X$ of $\Lambda(E)$ for each
$E \in \mathcal E$. Denote $\mathbb Q = \{C(E): E \in \mathcal E\}$ and $\mathbb E=\{g
E(h): g \in G\}$.
Then $\mathbb U \cup \mathbb Q$ is a contracting system with bounded
intersection in $X$ and $\mathbb P \cup \mathbb E$ is so in $\Gx$. 

\end{enumerate}
\end{example}

Even in this abstract setting, we can still obtain the following
non-trivial facts.

\begin{lem}\label{uniformtrans}
There exists $\epsilon_0 >0$ such that for any $R>0$, there exists $L=L(R)
>0$ with the following property.

Let $\gamma$ be a geodesic in $X$, and $z \in \gamma$ such that
$d(z, \gamma_-) - d(\gamma_-, Y)>L,\; d(z, \gamma_+) -d(\gamma_+, Y)>L$ for some $Y\in \mathbb Y$.  Then $z$ is $(\epsilon_0, R)$-deep in $Y$.
\end{lem}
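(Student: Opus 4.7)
The plan is to combine the bounded projection aspect of Definition~\ref{contractdefn} with the fellow-travelling statement of Lemma~\ref{contracting}(1). The hypothesis places $z$ far from both endpoints of $\gamma$ relative to their distances to $Y$, and I will convert this first into a lower bound on the separation of the projections of $\gamma_\pm$ onto $Y$, and then into the existence of points of $\gamma$ close to $Y$ well on either side of $z$. Fix contracting constants $(\tau, D)$ for $Y$, set $a := d(\gamma_-, Y)$ and $b := d(\gamma_+, Y)$, and pick $p_\pm \in \proj_Y(\gamma_\pm)$. The broken path $\gamma_- \to p_- \to p_+ \to \gamma_+$ has length $a + d(p_-, p_+) + b$, so, using that $\gamma$ is a geodesic containing $z$,
\[
a + b + 2L \;<\; d(z, \gamma_-) + d(z, \gamma_+) \;=\; d(\gamma_-, \gamma_+) \;\leq\; a + d(p_-, p_+) + b,
\]
yielding $d(p_-, p_+) > 2L$.

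For $L > D$, the contracting property forces $\gamma \cap N_\tau(Y) \neq \emptyset$: otherwise Definition~\ref{contractdefn} would give $d(p_-, p_+) \leq \diam{\proj_Y(\gamma)} < D$. Let $u$ and $v$ be the first and last points of $\gamma$ in $N_\tau(Y)$. Since $[u,v]_\gamma$ is a sub-geodesic with both endpoints within $\tau$ of $Y$, Lemma~\ref{contracting}(1) gives $[u,v]_\gamma \subset N_{\epsilon_0}(Y)$ for $\epsilon_0 := \epsilon(\tau)$, which depends only on the contracting data and not on $R$. For any $u'$ strictly before $u$ on $\gamma$ the sub-geodesic $[\gamma_-, u']_\gamma$ avoids $N_\tau(Y)$, so Definition~\ref{contractdefn} gives $\diam{\proj_Y([\gamma_-, u']_\gamma)} < D$; letting $u' \to u$ and using $d(u, Y) \leq \tau$ yields $d(p_-, u) < D + \tau + 1$, hence $d(\gamma_-, u) \leq a + D + \tau + 1$. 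A symmetric estimate gives $d(v, \gamma_+) \leq b + D + \tau + 1$.

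Setting $L := L(R) := R + D + \tau + 2$, the chain $d(\gamma_-, u) \leq a + D + \tau + 1 < a + L < d(\gamma_-, z)$ places $u$ strictly before $z$ on $\gamma$, and $d(u, z) = d(\gamma_-, z) - d(\gamma_-, u) > R$; symmetrically $d(z, v) > R$. Therefore $B(z, R) \cap \gamma \subset [u,v]_\gamma \subset N_{\epsilon_0}(Y)$, showing $z$ is $(\epsilon_0, R)$-deep in $Y$ with $\epsilon_0$ uniform in $R$ as claimed. I anticipate no major obstacle: the argument is a direct manipulation of the contracting axioms, and the only mildly delicate step is the passage from the diameter bound on the sub-geodesic to the distance bound $d(p_-, u) < D + \tau + 1$, which is a short limiting argument.
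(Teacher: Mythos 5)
Your proof is correct and follows essentially the same route as the paper: derive $d(p_-,p_+)>2L$ from the triangle inequality, force $\gamma$ to meet $N_\tau(Y)$, invoke Lemma~\ref{contracting}(1) to get $[u,v]_\gamma\subset N_{\epsilon_0}(Y)$ with $\epsilon_0=\epsilon(\tau)$, and use bounded projections of the outer subsegments to show the entry and exit points $u,v$ lie at distance at least $R$ from $z$. The only cosmetic difference is your explicit limiting step to bound $d(p_-,u)$, where the paper simply asserts $d(x,\proj_Y(\gamma_-))\le D+\tau$ ``by projection,'' and your slightly leaner choice of $L$; the substance is identical.
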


\begin{proof}
Assume that  $\mathbb Y$ is a $(\tau, D)$-contracting system for
$\tau, D >0$.  Set $L = \tau + D + R$.

Observe that $\gamma \cap N_\tau(Y) \neq \emptyset$. If not, we get $\diam{\proj_Y(\gamma)} < D$.  By assumption, we have that $d(\gamma_-, \gamma_+) \ge 2L+ d(\gamma_-, Y)+d(\gamma_+, Y)$ and, by the triangle inequality, $d(\gamma_-, \gamma_+) \le d(\gamma_-, Y)+d(\gamma_+, Y) + \diam{\proj_Y(\gamma)}$. Thus, $\diam{\proj_Y(\gamma)}  \ge 2L$. This is a contradiction with the choice of $L$. 
\begin{figure}[htb]
\centering \scalebox{0.5}{
\includegraphics{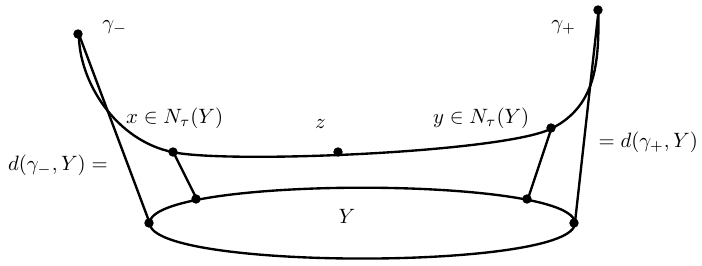} 
} \caption{Lemma \ref{uniformtrans}} \label{Figure0}
\end{figure}

Denote by $x, y$
the entry and exit points of $\gamma$ in $N_\tau(Y)$ respectively. Let $\epsilon_0 =\epsilon(\tau)$ be given by Lemma
\ref{contracting} such that $[x, y]_\gamma \subset N_{\epsilon_0}(Y)$.
By projection we have that $d(x, \proj_Y(\gamma_-)) \le \proj_Y([\gamma_-,x]_\gamma) + \tau\le D+\tau$ and so $|d(\gamma_-, \proj_Y(\gamma_-)) - d(\gamma_-, x)| \le D +
\tau$. Since $d(z, \gamma_-) - d(\gamma_-, Y)>L>D+\tau$, we thus have $z\in [x, \gamma_+]_\gamma$: $d(x, \gamma_-)\le d(z, \gamma_-)$. So $d(z, x)=d(z, \gamma_-)-d(x, \gamma_-)\ge L-D-\tau \ge R$. The same reasoning gives that $d(z, y)\ge R$.  Consequently, we have that $z$ is $(\epsilon_0,
R)$-deep in $Y$.
\end{proof}

\begin{lem}\label{entrytrans}
For any
$\epsilon > 0$, there exists $R=R(\epsilon)>0$ with the
following property.

For a geodesic $\gamma$ and $Y \in \mathbb Y$,  the corresponding entry and exit points $x, y$ of $\gamma$ in $N_\epsilon(Y)$ are $(\epsilon, R)$-transition points in
$\gamma$, provided that $d(x,y)\ge R$.
\end{lem}
\begin{proof}
By Lemma \ref{contracting}, there exists $\sigma \ge\epsilon$ such
that $[x, y]_\gamma \subset N_{\sigma}(Y)$. Set $R=R(\sigma)+1$,
where $R(\sigma)$ is given by Lemma \ref{bipbpp} for $\mathbb Y$.

Since $x$ is not $(\epsilon,
R)$-deep in $Y$, lets assume by contradiction that $x$ is $(\epsilon, R)$-deep in  $Y'
 \in \mathbb Y$ for $Y'\neq Y$. Since $d(x, y) \ge R$, we see that $N_\sigma(Y)$ and $N_\sigma(Y')$ contain a common segment of $[x,y]_\gamma$ with length at least $R$. This gives a contraction, as $R >R(\sigma)$ and $Y
\neq Y'$. Hence, $x$ is $(\epsilon,
R)$-transitional.
\end{proof}
 
\subsection{Relatively hyperbolic groups}
We consider a non-elementary relatively hyperbolic group
$\GP$ given by Definition \ref{RHdefn}. We take \cite{Gro},  \cite{GH} for general references on hyperbolic spaces, and \cite{Bow2}, \cite{Bow1}  on relatively hyperbolic groups.

Since $G$ acts isometrically on $X$, all elements in $G$ can be classified into three mutually exclusive classes: \textit{elliptic}, \textit{hyperbolic} and
\textit{parabolic} elements. We refer the reader to
\cite{Gro} for precise definitions. The basic fact that we need for a hyperbolic element $h$ is that $h$ possesses an $\langle h \rangle$-invariant bi-infinite quasi-geodesic, called the \textit{axe} of $h$, such that its two endpoints in Gromov boundary of $X$ are fixed by $h$.

\begin{defn}
Let $T$ be a compact metrizable space on which a group $G$ acts by
homeomorphisms. We recall the following definitions, cf. \cite{Bow2}.
\begin{enumerate}
\item
The action of $G$ on $T$ is a \textit{convergence group action} if the induced
group action of $G$ on the space of distinct triples over $T$ is
proper.
\item
The \textit{limit set} $\Lambda(\Gamma)$ of a subgroup $\Gamma
\subset G$ is the set of accumulation points of all $\Gamma$-orbits
in $T$.
\item
A point $\xi \in T$ is called \textit{conical} if there are a
sequence of elements $g_n \in G$ and a pair of distinct points $a, b
\in T$ such that the following holds
$$g_n(\xi, \zeta) \to (a, b),$$
for any $\zeta \in T \setminus \xi$.
\item
A point $\xi \in T$ is called \textit{bounded parabolic} if the
stabilizer $G_\xi$ in $G$ of $\xi$ is infinite, and acts properly
and co-compactly on $T\setminus \xi$.
\item
A convergence group action of $G$ on $T$ is called \textit{geometrically
finite} if every point $\xi \in T$ is either a conical point or a
bounded parabolic point.
\end{enumerate}
\end{defn}

Let $\pX$ be the Gromov boundary of $X$. In \cite{Bow1}, Bowditch showed that $G$ acts geometrically finitely
on $\pX$, which
are topologically the same for every choice of $X$ in Definition
\ref{RHdefn}. So we can associate to the pair $\GP$ a topological boundary $\pX$ called \textit{Bowditch
boundary}, denoted by $\pG$.

It is a useful fact that the stabilizer $G_U$ of $U \in \mathbb U$ acts co-compactly on the (topological) boundary $\partial U$ in $X$. Note that the boundary at infinity of a horoball $U$ in $X \cup \pX$
consists of a bounded
parabolic point fixed by $G_U$. 

This gives the following observation which will be
invoked implicitly.

\begin{obs}
Let $\xi$ be a conical point in $\pX$. Then for any $\epsilon >0$,
any geodesic ending at $\xi$ exits the $\epsilon$-neighborhood of
any horoball $U \in \mathbb U$ which the geodesic enters into.
\end{obs}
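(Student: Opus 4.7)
The plan is a proof by contradiction. Suppose some geodesic ray $\gamma$ with $\gamma_+=\xi$ enters $N_\epsilon(U)$ at a finite parameter $t_0$ yet never exits, so that $\gamma([t_0,\infty))\subset N_\epsilon(U)$. I will derive from this that $\xi$ must be a bounded parabolic point, contradicting conicality.

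The first step is purely geometric. Since $U$ is a horoball centered at a parabolic point $p_U\in\pX$ fixed by $G_U$, the thickened region $N_\epsilon(U)$ is contained in a deeper horoball $U'$ centered at the same point $p_U$; this follows from the nesting property of horoballs in a hyperbolic space. Consequently the closure of $N_\epsilon(U)$ in $X\cup\pX$ meets $\pX$ only at $p_U$. Since $\gamma(t)\to\xi$ and $\gamma([t_0,\infty))\subset N_\epsilon(U)\subset U'$, this forces $\xi=p_U$, and hence $\xi$ is a bounded parabolic point with infinite stabilizer $G_\xi=G_U$.

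The second step is to rule this out using the conicality hypothesis. By definition there exist $g_n\in G$ and distinct points $a,b\in\pX$ with $g_n(\xi,\zeta)\to(a,b)$ for every $\zeta\neq\xi$. If $h\in G_\xi$ has infinite order, then for any $\zeta\neq\xi$ one has $h\zeta\neq\xi$ as well, so both $g_n\zeta\to b$ and $g_n(h\zeta)\to b$; choosing $\zeta$ and $h\zeta$ in distinct $G_\xi$-orbits (possible since $G_\xi$ acts cocompactly on $\pX\setminus\xi$) and applying the convergence property to the triples $(\xi,\zeta,h\zeta)$, one contradicts the properness of the $G$-action on the space of distinct triples of $\pX$. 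This is the standard incompatibility of conical and bounded parabolic points in geometrically finite convergence group actions, which I would simply invoke from \cite{Bow2} rather than reprove.

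The geometric step is routine once one uses the nesting of horoballs; the conceptual content sits in the second step, but it is a well-known fact in the framework of \cite{Bow1}, \cite{Bow2} in which the paper is set. The main obstacle, if any, is being careful that $N_\epsilon(U)$ lies in a horoball with the same parabolic point at infinity and that no other boundary point is accumulated, which is immediate from the structure of horoballs in $X$.
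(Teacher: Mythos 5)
Your approach matches the paper's implicit one: the observation is meant to follow from the preceding remark that a horoball has exactly one point at infinity, namely a bounded parabolic point, and your geometric step spells that reasoning out correctly (a geodesic that stays in $N_\epsilon(U)$, or even just returns to it for an unbounded set of times, must converge to $p_U$, forcing $\xi=p_U$ to be parabolic).

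One caution on your second step: the inline sketch you attempt for the disjointness of conical and bounded parabolic points is not a valid argument. If $h\in G_\xi$, then $\zeta$ and $h\zeta$ lie in the \emph{same} $G_\xi$-orbit, so ``choosing $\zeta$ and $h\zeta$ in distinct $G_\xi$-orbits'' is incoherent. Moreover, observing that $g_n(\xi,\zeta,h\zeta)\to(a,b,b)$ does not contradict properness of the action on the space of distinct triples; it only shows that $(g_n)$ escapes to infinity, which is exactly what a shadowing sequence for a conical point is supposed to do. The genuine proof of the incompatibility (due to Tukia) is more delicate and really does require interplaying the convergence dynamics of $(g_n)$ with the cocompact action of the parabolic stabilizer on $\pX\setminus\{\xi\}$. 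Since you ultimately defer to \cite{Bow2} for this standard fact rather than relying on the sketch, your proposal stands; but the sketch itself should be dropped. A further small formal point: ``never exits'' does not by itself give $\gamma([t_0,\infty))\subset N_\epsilon(U)$, only an unbounded set of return times to $N_\epsilon(U)$ --- your argument goes through unchanged with that weaker statement.
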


\subsection{Floyd boundary}
Recall that a relatively hyperbolic group admits a Bowditch boundary which depends on its peripheral structure. In this subsection, we
shall describe a construction of ``absolute"  boundary, due to Floyd, for a
finitely generated group.

Let $G$ be a group with a finite generating set $S$. Assume that
$1\notin S$ and $S=S^{-1}$.  Consider the Cayley graph $\Gx$  of $G$ with respect to $S$. Denote by $d_S$ (or simply by $d$ if no ambiguity)
the combinatorial metric on $\Gx$.


Fix $f(n) = \lambda^{-n}$ ($\lambda >1$). For any $v \in G$,
we define a Floyd metric $\rho_v$ as follows. The \textit{Floyd
length} $\len_v(e)$ of an edge $e$ in $\Gx $ is set to be $f(n)$, where $n =
d(v, e)$. This naturally induces a Floyd length $\len_v$ of a path and then a \textit{Floyd metric} $\rho_v$ on $\Gx$.
Let $\Gf$ be the Cauchy completion of $G$ with respect to $\rho_v$.
The complement $\pGf$ of $G$ in $\Gf$ is called \textit{Floyd
boundary} of $G$. The $\pGf$ is called \textit{non-trivial} if
$\sharp \pGf>2$. We refer the reader to \cite{Floyd}, \cite{Ka} for
more detail.

It is easy to see that $\rho_{v} (x, y) = \rho_{gv}(gx, gy)$ for any
$g \in G$. Floyd metrics relative to different basepoints are linked
by the following inequality,
\begin{equation}\label{equivmetric}
\lambda^{-d(v, v')} \le \frac{\rho_{v}(x, y)}{\rho_{v'}(x, y)} \le
\lambda^{d(v, v')}
\end{equation}
This implies that the action $G$ on $\Gx$ extends continuously to
$\pGf$.  In \cite{Ka}, Karlsson showed that $G$ acts on $\Gf$ as a
convergence group action. The lemma below that he uses to prove this
result is also very helpful in this paper. His lemma holds, in fact,
for any quasi-geodesic, but we only need the following simplified
version.
\begin{lem}[Visibility Lemma] \label{karlssonlem}\cite[Lemma 1]{Ka}
There is a function $\varphi: \mathbb R_{\ge 0} \to \mathbb R_{\ge 0}$ such that for
any $v \in G$ and any geodesic $\gamma$ in $\Gx$, if the following holds
$$\rho_v(\gamma_-, \gamma_+) \ge \kappa,$$ then $d(v,
\gamma) \le \varphi(\kappa)$. 
\end{lem}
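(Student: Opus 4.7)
The plan is to prove the contrapositive: if $\gamma$ stays far from $v$, then $\len_v(\gamma)$ (and hence $\rho_v(\gamma_-,\gamma_+)$) is small. Since $\rho_v$ is defined as an infimum of Floyd lengths of paths, we have $\rho_v(\gamma_-,\gamma_+) \le \len_v(\gamma)$, so it suffices to exhibit a function $\Phi(D)\to 0$ as $D\to\infty$ bounding $\len_v(\gamma)$ from above in terms of $D := d(v,\gamma)$, and then set $\varphi(\kappa):=\sup\{D\ge 0 : 2\Phi(D)\ge \kappa\}$.

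The key geometric input is to exploit both the triangle inequality and the geodesic condition on $\gamma$. I would pick a point $p\in\gamma$ realizing $d(v,p)=D$ and split $\gamma=\gamma_1\cup\gamma_2$ at $p$. Each $\gamma_i$ is itself a geodesic issuing from $p$; parametrizing by integer arc length $t$, the identity $d(p,\gamma_i(t))=t$ combined with the triangle inequality gives $d(v,\gamma_i(t)) \ge t-D$, while the choice of $p$ as a closest point on $\gamma$ to $v$ forces $d(v,\gamma_i(t))\ge D$. Together,
$$d(v,\gamma_i(t)) \ge \max(D,\,t-D) \qquad \text{for all } t\ge 0.$$

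Each edge of $\gamma_i$ incident to $\gamma_i(t)$ therefore has Floyd length at most $\lambda^{-(\max(D,\,t-D)-1)}$, and summing produces
$$\len_v(\gamma_i) \;\le\; \lambda\sum_{t\ge 0}\lambda^{-\max(D,\,t-D)} \;=\; \lambda\Bigl(2D\,\lambda^{-D} + \tfrac{\lambda^{1-D}}{\lambda-1}\Bigr) \;=:\; \Phi(D).$$
Since $\lambda>1$, the function $\Phi$ is continuous and tends to $0$ as $D\to\infty$. Hence $\rho_v(\gamma_-,\gamma_+)\le \len_v(\gamma_1)+\len_v(\gamma_2)\le 2\Phi(D)$, and the definition of $\varphi(\kappa)$ above yields a finite value with the required property.

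There is no substantial obstacle: the argument is a closest-point projection followed by a geometric-series estimate. The only care needed is the routine conversion between vertex distances and edge Floyd weights, which contributes only a harmless multiplicative factor of $\lambda$; once that bookkeeping is done, the decay of $\Phi(D)$ is immediate from $\lambda>1$.
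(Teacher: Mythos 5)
The paper cites this result from Karlsson's paper \cite{Ka} without supplying a proof, so there is nothing to compare against in the text itself. Your argument is a correct reconstruction of Karlsson's original proof: take a closest point $p$ of $\gamma$ to $v$, use the two lower bounds $d(v,\gamma_i(t))\ge D$ (from minimality of $p$) and $d(v,\gamma_i(t))\ge t-D$ (triangle inequality combined with $d(p,\gamma_i(t))=t$), and feed the resulting bound $\lambda^{-\max(D,t-D)}$ on edge Floyd lengths into a geometric series. The bookkeeping factor $\lambda$ from the edge-versus-vertex distance is handled correctly, $\Phi(D)\to 0$ since $\lambda>1$, and the inverse-type definition of $\varphi$ is the right way to package the contrapositive. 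One small remark: for the lemma to apply when $\gamma_\pm\in\pGf$ (as it is used later in the paper) one passes to the limit along the geodesic using the Cauchy-completion definition of $\rho_v$ on $\pGf$; your bound $\rho_v(\gamma_-,\gamma_+)\le\len_v(\gamma)$ survives this limit unchanged, so no further work is needed.
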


The relevance of Floyd boundary to relative hyperbolicity is provided by the
following theorem by Gerasimov.
\begin{thm}\cite[Corollary 1.5]{Ge2}\label{Floyd}
Suppose $\GP$ is relatively hyperbolic with the Bowditch boundary
$\pG$. Then there exists $\lambda_0 >1$ such that there exists a
continuous $G$-equivariant surjective map
$$
\pGf \to \pG
$$
for any $\lambda > \lambda_0$.
\end{thm}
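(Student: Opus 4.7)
The plan is to construct a $G$-equivariant map $F: \pGf \to \pG$ by sending $\xi \in \pGf$ to the limit of the orbit $\{g_n o\}$ in the compactification $X \cup \pG$, for any sequence $g_n \in G$ with $g_n \to \xi$ in Floyd metric, where $o \in X$ is a fixed basepoint and $X$ is the cusp-uniform hyperbolic model from Definition~\ref{RHdefn}. The essential ingredients are Karlsson's Visibility Lemma~\ref{karlssonlem}, properness of $G \curvearrowright X$, and a geometric comparison between $\Gx$ and $X$ that matches their thick parts via transition points.

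First I would define $F$ as follows. Given $\xi \in \pGf$, pick $g_n \to \xi$ in $\rho_1$. Then $d_S(1, g_n) \to \infty$, and properness of $G \curvearrowright X$ forces $d_X(o, g_n o) \to \infty$. By compactness of $X \cup \pG$, a subsequence satisfies $g_{n_k} o \to \eta \in \pG$; set $F(\xi) := \eta$. The crux is well-definedness: suppose $g_n, h_n \to \xi$ Floyd-wise but $g_n o \to \eta_1$ and $h_n o \to \eta_2$ with $\eta_1 \neq \eta_2$. Using the convergence action of $G$ on $\pG$, choose centering elements $f_n \in G$ with $d_X(o, f_n o)$ uniformly bounded such that $f_n^{-1} g_n o$ and $f_n^{-1} h_n o$ converge to distinct points $a, b \in \pG$ with Gromov product $(a, b)_o$ bounded. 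Then any $X$-geodesic from $f_n^{-1} g_n o$ to $f_n^{-1} h_n o$ passes within a uniform distance of $o$; transferring this to $\Gx$ via the thick-part comparison yields an $(\epsilon, R)$-transition point of the $\Gx$-geodesic $[f_n^{-1} g_n, f_n^{-1} h_n]$ at bounded $d_S$-distance from $1$.

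At this stage the contrapositive of the Visibility Lemma bites. For $\lambda$ larger than an explicit threshold $\lambda_0$, the transition-point segment near $1$ contributes Floyd length, and hence endpoint Floyd distance, bounded below by some $\kappa_0 > 0$. Thus $\rho_1(f_n^{-1} g_n, f_n^{-1} h_n) \ge \kappa_0$; since $d_S(1, f_n)$ is uniformly bounded (via the thick-part comparison applied to $f_n o$), the equivariance (\ref{equivmetric}) upgrades this to $\rho_1(g_n, h_n) \succ \kappa_0$, contradicting $\rho_1(g_n, h_n) \to 0$. This is the main obstacle. Its resolution requires choosing $\lambda_0$ large enough so that $\lambda^{-d}$-decay dominates the exponential distortion of parabolic cosets in $X$, so that every Floyd closure of a parabolic coset collapses to a single Bowditch parabolic point while distinct conical points remain separated by $F$.

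The remaining properties are routine. For continuity, if $\xi_k \to \xi$ in $\Gf$ witnessed by $g^{(k)}_n \to \xi_k$, a diagonal selection $g^{(k)}_{n_k}$ converges Floyd-wise to $\xi$ and its $X$-orbit converges to $\lim_k F(\xi_k)$, forcing $F(\xi) = \lim_k F(\xi_k)$. Equivariance $F(g\xi) = gF(\xi)$ follows from equivariance of both the orbit map and the Floyd metrics. For surjectivity, any conical $\eta \in \pG$ is approached by an orbit along a geodesic ray in $X$ converging to $\eta$, whose Floyd subsequential limit (existing by compactness of $\Gf$) maps to $\eta$; any bounded parabolic point fixed by $P \in \mathcal P$ is approached by an unbounded orbit $p_n o$ with $p_n \in P$, and a Floyd convergent subsequence provides a preimage.
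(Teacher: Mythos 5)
The paper does not prove Theorem~\ref{Floyd}; it imports it from Gerasimov \cite{Ge2} and uses it as a black box, so there is no internal argument to compare against and your proposal must stand on its own. The high-level plan --- define $F(\xi)$ as the limit of $\{g_no\}$ in $X\cup\pG$ for any Floyd-convergent $g_n\to\xi$ and establish well-definedness by a contrapositive visibility argument --- is a sensible outline, but it leaves the genuinely hard steps as unsupported assertions. One small point first: when $\eta_1\neq\eta_2$ you do not need (and a convergence action does not in general supply) ``centering elements $f_n$ with $d_X(o,f_no)$ uniformly bounded''; simply take $f_n=1$, and since $(\eta_1,\eta_2)_o<\infty$ the $X$-geodesics $[g_no,h_no]$ enter a fixed ball about $o$ for large $n$.

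The serious gaps are in what follows. The sentence that transfers ``$[g_no,h_no]$ passes near $o$'' into ``$[g_n,h_n]$ in $\Gx$ carries an $(\epsilon,R)$-transition point at bounded $d_S$-distance from $1$'' is not backed by anything: the comparison machinery in Section~6 (e.g.\ Lemma~\ref{TransCloseG} and the lift lemmas) runs from transition points of $X$-geodesics to vertices of word geodesics, not the other way round, and the implication you need is exactly where the difficulty of Gerasimov's theorem concentrates, since it is here that the exponential distortion of parabolics in $X$ has to be beaten by the Floyd decay. You then invoke Lemma~\ref{floydtrans} to turn a transition point into a lower bound on $\rho_1(g_n,h_n)$; but Lemma~\ref{floydtrans} is \cite[Proposition~5.2.3]{GePo1}, a result sitting downstream of \cite{Ge2}, so using it as a tool in a proof of Theorem~\ref{Floyd} is at best an appeal to the very machinery being built and at worst circular. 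Finally, the claim that for $\lambda$ past the threshold ``every Floyd closure of a parabolic coset collapses to a single Bowditch parabolic point while distinct conical points remain separated'' is not something you have derived --- it is a restatement of the conclusion. In short, the scaffolding is reasonable, but these three load-bearing claims are precisely the content of the theorem and the proposal does not supply them.
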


Using Theorem \ref{Floyd}, Gerasimov-Potyagailo gave a way to compactify $\Gx$ by $\pG$, and push the Floyd metric on $\Gf$ to get a shortcut metric on $\Gx\cup \pG$.

Let $\bar \rho_v$ be the maximal pseudo-metric on $\Gf$ such that
$\bar \rho_v(\cdot, \cdot) \le \rho_v(\cdot, \cdot)$ and $\bar
\rho_v$ vanishes on any pair of points in the preimage of a point
$\xi \in \pG$. Pushing forward $\bar \rho_v$, we can obtain a true
metric on $\Gx\cup \pG$, which is called \textit{shortcut
metric}. See \cite{GePo2} for more detail. 

Since $\bar \rho_v(\cdot, \cdot) \le \rho_v(\cdot, \cdot)$, Lemma \ref{karlssonlem} also holds for shortcut metrics $\bar \rho_v$ on $\Gx\cup \pG$. By abuse of language, we will denote by $\rho_v$ the corresponding shortcut metrics on $\Gx\cup \pG$. 

\subsection{Characterization of conical points}
In this subsection, we shall use the geometry of Cayley graph of $G$ to give a  characterization of conical points in Bowditch boundary $\pG$.   

\begin{lem}\label{charconical}
There exists a constant $r>0$ with the following property.

A point $\xi \in \pG$ is a conical point if and only if there exists a sequence of elements $g_n \in G$ such that for any
geodesic ray $\gamma$ originating at $\gamma_-\in G$ and ending at $\xi$, we have
$$\gamma \cap B(g_n, r) \neq
\emptyset$$ for all but finitely many $g_n$.
\end{lem}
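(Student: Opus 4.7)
The plan is to prove the two implications of the equivalence separately.

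For the necessary direction, assuming $\xi$ is conical, the convergence-group definition supplies $g_n \in G$ and distinct points $a, b \in \pG$ with $g_n \xi \to a$ and $g_n \zeta \to b$ for every $\zeta \in \pG \setminus \{\xi\}$. Set $h_n := g_n^{-1}$. For any geodesic $\gamma$ in $\Gx$ ending at $\xi$, note that $\gamma_- \neq \xi$: the Morse-type property of rays to a conical point (two geodesic rays from a common basepoint to a conical limit point must fellow-travel) precludes bi-infinite geodesics with both endpoints at $\xi$, since the two halves of such a geodesic from any midpoint would drift apart linearly. Hence $g_n \gamma$ has endpoints converging to the distinct points $b$ and $a$ in the shortcut completion, giving $\rho_1(g_n\gamma_-, g_n\gamma_+) \ge \rho_1(a,b)/2$ eventually. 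Karlsson's Visibility Lemma (Lemma~\ref{karlssonlem}), which remains valid for the shortcut metric, yields $d(1, g_n\gamma) \le \varphi(\rho_1(a,b)/2) =: r$ for large $n$, i.e.\ $d(h_n, \gamma) \le r$. To make $r$ independent of $\xi$, I replace $g_n$ by $f g_n$ for a fixed $f \in G$ chosen to send $(a,b)$ into a pre-selected pair of disjoint compact subsets of $\pG$; density of orbits of pairs under the convergence action makes this possible, and the separation in the fixed pair provides a uniform lower bound on $\rho_1$.

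For the sufficient direction, I argue by contradiction using the geometric-finiteness dichotomy on $\pG$: each $\xi$ is either conical or bounded parabolic. Assume $\xi$ is parabolic with maximal parabolic stabilizer $G_\xi$ and peripheral coset $P_\xi \subset \Gx$. I produce two geodesic rays $\gamma_1, \gamma_2$ in $\Gx$ ending at $\xi$ whose $r$-neighborhoods intersect in a bounded set. If $G_\xi$ is virtually cyclic (two-ended), take two disjoint rays in $P_\xi$ realizing its two ends: both collapse to $\xi$ under the Gerasimov map (Theorem~\ref{Floyd}), but their $r$-tubes in $\Gx$ eventually separate. Otherwise $G_\xi$ is not virtually cyclic, and the stabilizer of $\gamma_1$ in $G_\xi$ is finite by properness, so there exists $h \in G_\xi$ of infinite order whose action on $P_\xi$ is not a translation along $\gamma_1$; the translate $\gamma_2 = h^k \gamma_1$ then has Hausdorff distance from $\gamma_1$ tending to infinity with $|k|$, using Osin's quasi-isometric embedding of peripheral subgroups \cite{Osin}, so for $|k|$ large $N_r(\gamma_1) \cap N_r(\gamma_2) = \emptyset$. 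Assuming the $g_n$ are distinct (so $d(1, g_n) \to \infty$), the geometric condition forces $g_n \in N_r(\gamma_1) \cap N_r(\gamma_2)$ for all large $n$, contradicting boundedness of this intersection. Hence $\xi$ must be conical.

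The principal obstacle is the sufficient direction, specifically establishing the divergence of two geodesics to a parabolic point. This depends on a case analysis of $G_\xi$'s end structure, Osin's undistorted embedding of peripheral subgroups, and the bounded intersection property of the contracting system $\mathbb{P}$ (Lemma~\ref{bipbpp}); once this divergence is in hand, the remaining containment contradiction is essentially combinatorial.
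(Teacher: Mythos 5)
Your overall strategy matches the paper's in outline (show conicality via Karlsson's Visibility Lemma, rule out parabolic points in the converse), but there are two genuine gaps, one in each direction.

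\textbf{Necessary direction, uniformity of $r$.} After extracting $g_n$ and distinct limits $a,b$ from the bare convergence-group definition, you observe that $r=\varphi(\rho_1(a,b)/2)$ depends on $\xi$, and you propose to repair this by post-composing with $f\in G$ sending $(a,b)$ into a pre-selected disjoint pair of compacta, invoking ``density of orbits of pairs under the convergence action.'' That density statement is not automatic: the convergence action is proper on triples, not transitive on pairs, and nothing in the blind setup guarantees that an arbitrary orbit $G\cdot(a,b)$ meets a fixed pair $K_1\times K_2$. You would have to establish a form of topological mixing on pairs, which is a nontrivial fact. The paper sidesteps this entirely by citing Tukia's Theorem 1C, which already produces, for every conical point, a sequence $g_n$ with $g_n(\xi,\zeta)\to(a,b)$ and a \emph{uniform} lower bound $\rho_1(a,b)>\epsilon$ independent of $\xi$; the constant $r=\varphi(\epsilon/4)$ is then uniform by construction.

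\textbf{Sufficient direction.} Your goal --- produce two geodesic rays to a parabolic $\xi$ whose $r$-tubes have bounded intersection, then derive a contradiction with $d(1,g_n)\to\infty$ --- is the right one, but the construction has holes. In the virtually cyclic case, the ``two disjoint rays in $P_\xi$ realizing its two ends'' are rays in the peripheral coset equipped with its intrinsic metric; they need not be geodesics in $\Gx$, so you have not produced geodesics ending at $\xi$ to which the hypothesis applies. In the non-virtually-cyclic case, the assertion that $\gamma_2=h^k\gamma_1$ has Hausdorff distance to $\gamma_1$ tending to infinity because $h$ ``is not a translation along $\gamma_1$'' is not justified: both rays converge to the same boundary point $\xi$, and divergence of their $r$-tubes in $\Gx$ needs a real argument, not just infinite order of $h$. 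The paper avoids the entire case analysis by invoking the notion of a \emph{horocycle} at $\xi$ from Gerasimov--Potyagailo: a bi-infinite geodesic $\gamma$ in $\Gx$ with $\lim_{t\to+\infty}\gamma(t)=\lim_{t\to-\infty}\gamma(t)=\xi$, which exists at every bounded parabolic point. Its two halves are honest geodesic rays to $\xi$, and since $d(\gamma(t),\gamma(-s))=t+s$, any $g_n$ within distance $r$ of both halves lies in $B(\gamma(0),3r)$; this immediately contradicts $d(1,g_n)\to\infty$, with no case analysis and no appeal to the structure of $G_\xi$.
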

\begin{proof}
The direction ``$=>$": Recall that a convergence action of $G$ on a compact space $M$ is called \textit{2-cocompact} if the action on $\{(x, y)\in M^2: x\ne y\}$ admits a compact fundamental domain. Since $G$ acts geometrically finitely on $\pG$, by a theorem of Gerasimov \cite{Ge1}, $G$ acts 2-cocompactly on $\pG$. By  \cite[Lemma 3.2]{GePo2}, the action $G$ on $G\cup\pG$ is also 2-cocompact. Then there exists a uniform constant $\epsilon
>0$ such that for any conical point $\xi \in \pG$, there exist $a, b \in \pG, g_n \in G$ such that $\rho_1(a, b) > \epsilon$ and $g_n(\xi, \zeta) \to (a, b)$ for $\zeta \in  G \cup \pG\setminus \xi$. Indeed, if $\rho_1(a, b) \le \epsilon$, there exists $g\in G$ by 2-cocompactness such that $\rho_1(ga, gb) \le \epsilon$. 

Define  $r =
\varphi(\epsilon/4)$, where $\varphi$ is given by Lemma \ref{karlssonlem}.  Consider a geodesic ray $\gamma$ with $\zeta :=\gamma_-$ in $G$ and $\xi:=\gamma_+\in \pG$. It thus follows
that $g_n \gamma \cap B(1, r) \neq \emptyset$, for all but finitely
many $g_n$. Hence, $\{g_n^{-1}\}$ is the sequence we are looking for.

The direction ``$<=$": Recall that, in \cite{GePo2}, a bi-infinite geodesic $\gamma$ in $\Gx$ is
called a \textit{horocycle} at $\xi \in \pG$ if the following holds
$$\lim_{t \to \infty} \gamma(t) = \lim_{t \to \infty} \gamma(-t) =
\xi.$$ Since $G$ acts geometrically finitely on $\pG$, $\xi$
is either bounded parabolic or conical. At parabolic points, there
always exist horocycles. Thus, each half of a horocycle gives raise to a geodesic ray but one of them  violates $\gamma \cap B(g_n, r) \neq
\emptyset$.  
\end{proof}
\begin{rem}
In the ``$=>$" direction the geometrically finite assumption is only needed to obtain a uniform constant $r$. Thus, the ``$=>$" direction holds for any conical point $\xi \in \pGf$, provided that we do not demand for a uniform constant $r$.  
\end{rem}

\subsection{Transition points}

There are only finitely many $G$-conjugacy
classes in $\mathcal P$, cf. \cite{Tukia}. We take a choice $\bar{\mathcal P}$ of a
complete set of conjugacy representatives in $\mathcal P$.  Define
$\mathbb P = \{gP: g \in G, P \in \mathcal{\bar P}\}$. It is
worth noting that $\mathbb P$ depends on the choice of
$\bar{\mathcal P}$. 

It is known in \cite{DruSapir}, \cite{GePo2} that $\mathbb P$ is a contracting system with bounded intersection in $\Gx$. Hence, the
notion of transition points relative to $\mathbb P$ applies in
$\Gx$. In fact, this notion was introduced by Hruska in \cite{Hru}.
It is further generalized and elaborated on via Floyd metrics by
Gerasimov-Potyagailo in \cite{GePo4}. Their viewpoint will be adopted
in this paper.

Following Gerasimov-Potyagailo,  we explain the role of transition
points via Floyd metrics in the following manner. Recall that the Bowditch
boundary $\pG$ is equipped with a family of shortcut metrics
$\{\rho_v\}_{v\in G}$.

\begin{lem}\label{floydtrans}\cite[Corollary 5.10]{GePo4}
There exists $\epsilon_0 >0$ with the following property.

For any $\epsilon\ge \epsilon_0, R > 0$, there exists $\kappa =
\kappa(\epsilon, R)>0$ such that for any geodesic $\gamma$ and an
$(\epsilon, R)$-transition point $v \in \gamma$, we have
$\rho_v(\gamma_-, \gamma_+) > \kappa$.
\end{lem}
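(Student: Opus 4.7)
The plan is to argue by contradiction, exploiting $G$-equivariance and compactness of the Floyd completion. Assume there exist geodesics $\gamma_n$ and $(\epsilon,R)$-transition points $v_n \in \gamma_n$ with $\rho_{v_n}((\gamma_n)_-,(\gamma_n)_+) \to 0$. Using $\rho_v(x,y) = \rho_{gv}(gx,gy)$, I translate by $v_n^{-1}$ so that $v_n = 1$; left translation preserves the transition condition, so $1$ remains an $(\epsilon,R)$-transition point of the new $\gamma_n$. Split $\gamma_n = \gamma_n^- \cup \gamma_n^+$ at $1$; since $1$ is a transition point, each half has length at least $R$. Extracting a subsequence and using local finiteness of $\Gx$, the rays $\gamma_n^\pm$ converge on every bounded ball to geodesic rays $\gamma^\pm$ from $1$ with ideal endpoints $\xi^\pm \in \Gf$. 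By continuity of $\rho_1$ on the compact space $\Gf$, the hypothesis $\rho_1((\gamma_n)_-,(\gamma_n)_+) \to 0$ forces $\xi^- = \xi^+ =: \xi \in \pGf$, and $\gamma_\infty := \gamma^- \cup \gamma^+$ is a bi-infinite geodesic whose two Floyd endpoints coincide at $\xi$, i.e.\ a horocycle in the sense recalled inside the proof of Lemma \ref{charconical}.

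The main obstacle is to show that any such $\gamma_\infty$ is forced into a uniformly bounded neighborhood of some peripheral coset. I aim to identify a constant $\epsilon_0 = \epsilon_0(\lambda,\GP)$ such that $\gamma_\infty \subset N_{\epsilon_0}(gP)$ for some $gP \in \mathbb P$. The strategy is to first project $\xi$ to a point $\bar\xi \in \pG$ via Theorem \ref{Floyd}; then rule out the case that $\bar\xi$ is conical, since the direction ``$\Leftarrow$'' of Lemma \ref{charconical} (applied to each of the two rays composing $\gamma_\infty$) is incompatible with a horocycle through a fixed basepoint. Hence $\bar\xi$ is a bounded parabolic point, stabilized by some conjugate $gPg^{-1}$. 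Cocompactness of $G_{\bar\xi}$ on $\pG\setminus\{\bar\xi\}$ together with Karlsson's Visibility Lemma \ref{karlssonlem}, applied to progressively longer central sub-segments of $\gamma_\infty$, forces each such sub-segment into $N_{\epsilon_0}(gP)$; the parameter $\epsilon_0$ depends only on $\varphi$, the diameter of a compact fundamental domain for the parabolic action, and the Floyd parameter $\lambda$.

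With $\epsilon_0$ fixed, set $\epsilon \ge \epsilon_0$ in the statement. Since $\gamma_n \to \gamma_\infty$ uniformly on $B(1,R)$, for all sufficiently large $n$ we have $\gamma_n \cap B(1,R) \subset N_\epsilon(gP)$, contradicting the hypothesis that $1$ is an $(\epsilon,R)$-transition point of $\gamma_n$. The explicit constant $\kappa = \kappa(\epsilon,R)$ in the conclusion is then harvested from the quantitative (non-sequential) forms of the compactness and Visibility Lemma arguments used above.
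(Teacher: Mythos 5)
The paper gives no proof of this lemma: it cites \cite[Proposition~5.2.3]{GePo1} verbatim, so there is no ``paper's proof'' to compare against. Your compactness/contradiction outline is a reasonable strategy and would, if the middle step were filled in, give a correct (non-quantitative) proof. The opening reduction is fine: by equivariance of $\rho_v$ you may take $v_n=1$; since $\rho_1((\gamma_n)_-,(\gamma_n)_+)\to 0$, a local-finiteness/Arzel\`a--Ascoli argument produces a bi-infinite limit geodesic $\gamma_\infty$ both of whose ends converge to the same point $\xi\in\pGf$; and the final contradiction (once one knows $\gamma_\infty\subset N_{\epsilon_0}(gP)$, and since on $B(1,R)$ the $\gamma_n$ eventually coincide with $\gamma_\infty$, the point $1$ was $(\epsilon,R)$-deep in $gP$, not transitional) is legitimate. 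Ruling out that $\bar\xi$ is conical is also correct, though you invoke the wrong arrow: what you need is the ``$\Rightarrow$'' direction of Lemma~\ref{charconical} — a conical point has a sequence $g_n\to\infty$ that every geodesic ray to it must enter $B(g_n,r)$ — and then the two rays of $\gamma_\infty$ emanating from $1$ cannot both enter $B(g_n,r)$ once $d(1,g_n)>2r$ because their union is a geodesic.

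The genuine gap is the sentence ``Cocompactness of $G_{\bar\xi}$ on $\pG\setminus\{\bar\xi\}$ together with Karlsson's Visibility Lemma \ldots forces each such sub-segment into $N_{\epsilon_0}(gP)$.'' That claim — that a horocycle based at a bounded parabolic point $q$ lies in a \emph{uniformly} bounded neighbourhood of the peripheral coset stabilizing $q$ — is precisely the quantitative content of \cite[Proposition~5.2.3]{GePo1} (it is the contrapositive of the lemma you are proving, in the limiting case $\rho_v=0$). As written, your sketch does not establish it: cocompactness of $G_q$ on $\pG\setminus\{q\}$ is a statement about boundary points, whereas $\gamma_\infty(t)$ lives in the group, and $G_q$ does not act cocompactly on $\Gx$; the Visibility Lemma bounds the distance from a basepoint to a geodesic whose Floyd-endpoints are \emph{far apart}, which is the opposite of the horocycle situation. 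A correct argument here needs either the contracting/bounded-projection machinery (show the horocycle must enter $N_\tau(gP)$ by a bounded-projection contradiction, then use quasi-convexity of $gP$ to trap the whole geodesic), or a compactness argument over the space of all horocycles through $1$ — but in either case this is the real work, not a corollary of cocompactness-plus-visibility. You should either carry that out explicitly or, as the paper does, defer to \cite{GePo1}. The final remark that $\kappa(\epsilon,R)$ must be harvested from quantitative versions of the compactness steps is an honest flag; the existence statement you need does not require an effective constant, so this is acceptable, but it does mean your argument, even repaired, gives strictly less information than the cited proposition.
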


In what follows, we make ourselves conform to the following
convention.

\begin{conv}[about  $\epsilon, R$]\label{epsilonR}
When talking about $(\epsilon, R)$-transition points in $\Gx$ we
always assume that $\epsilon \ge \epsilon_0$, where $\epsilon_0$ are
given by Lemmas \ref{uniformtrans} and
\ref{floydtrans}. In addition, assume that $R >R(\epsilon)$, where
$R(\epsilon)$ is given by Lemma \ref{entrytrans}.
\end{conv}

The following lemma is proven in \cite{Hru} by Hruska in the case
that $r=0$, with an alternative proof by Gerasimov-Potyagailo that can be found in
\cite{GePo4}. The proof below is inspired by
the one of Proposition 6.3 in \cite{GePo4}.

\begin{lem}\label{PairTransG}
Let $\epsilon, R$ be chosen as in Convention (\ref{epsilonR}). For any $r \ge 0$, there
exist $D=D(\epsilon, R)>0$ and $L=L(r, \epsilon, R)>0$ with the following property.

Let $\gamma, \gamma'$ be two geodesics in $\Gx$ such that
$\gamma_-=\gamma'_-$ and $d(\gamma_+, \gamma'_+) < r$. Take an $(\epsilon, R)$-transition point $v \in
\gamma$ such that $d(v,
\gamma_+) > L$. Then there exists an $(\epsilon, R)$-transition
point $w \in \gamma'$ such that $d(v, w) < D$.
\end{lem}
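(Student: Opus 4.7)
The plan is to use Floyd-metric estimates combined with Karlsson's Visibility Lemma \ref{karlssonlem}, following the strategy of Proposition 6.2.1 in \cite{GePo1}. We first show that $v$ is close in $d$ to some point $u$ on $\gamma'$, and then produce an $(\epsilon, R)$-transition point $w$ on $\gamma'$ at bounded distance from $v$.

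For the first step, Lemma \ref{floydtrans} supplies $\kappa = \kappa(\epsilon, R) > 0$ with $\rho_v(\gamma_-, \gamma_+) > \kappa$. Since Floyd edge lengths decay exponentially in $d(v, \cdot)$ and $d(v, \gamma_+) > L$, any geodesic from $\gamma_+$ to $\gamma'_+$ (of length less than $r$) has $\rho_v$-length at most $r\lambda^{-(L-r)}$. Choosing $L = L(r, \epsilon, R)$ sufficiently large forces $\rho_v(\gamma_+, \gamma'_+) < \kappa/2$, so by the triangle inequality $\rho_v(\gamma_-, \gamma'_+) > \kappa/2$. Since $\gamma'$ is a geodesic from $\gamma_- = \gamma'_-$ to $\gamma'_+$, Karlsson's Visibility Lemma yields $u \in \gamma'$ with $d(v, u) \le \varphi(\kappa/2) =: C_1$.

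For the second step, if $u$ is itself an $(\epsilon, R)$-transition point of $\gamma'$, set $w = u$. Otherwise $u$ is $(\epsilon, R)$-deep in some $Y \in \mathbb P$; let $x', y'$ be the entry and exit of $\gamma'$ in $N_\epsilon(Y)$, so that $d(u, x'), d(u, y') \ge R$ and, by Lemma \ref{contracting}(2), $\proj_Y(\gamma'_-)$ and $\proj_Y(\gamma'_+)$ lie within a uniform constant of $x'$ and $y'$ respectively. We then transfer this to $\gamma$: since $\gamma_- = \gamma'_-$, the $Y$-projections of $\gamma_-$ and $\gamma'_-$ coincide, and since $d(\gamma_+, \gamma'_+) < r$, Lemma \ref{contracting}(3) bounds the diameter of $\proj_Y(\gamma_+) \cup \proj_Y(\gamma'_+)$ by $r + M$. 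The bounded projection property forces $\gamma$ to enter $N_\epsilon(Y)$; let $x, y$ be its entry and exit, with $x$ within an $\epsilon$-dependent constant of $x'$. Because $v$ is $(\epsilon, R)$-transitional on $\gamma$ and within $C_1 + \epsilon$ of $Y$, the contrapositive of Lemma \ref{uniformtrans} forces $v$ within an $R$-dependent constant of $x$ or of $y$. In the entry case, $d(v, x')$ is bounded by a constant depending only on $\epsilon, R$, and a point on $[x', y']_{\gamma'}$ near $x'$ is, by Lemma \ref{entrytrans}, an $(\epsilon, R)$-transition point $w$ with $d(v, w) \le D$ for some $D = D(\epsilon, R)$; the exit case is treated analogously.

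The main obstacle is the exit-side analysis: $\proj_Y(\gamma_+)$ and $\proj_Y(\gamma'_+)$ may differ by up to $r + M$, so $y$ and $y'$ need not a priori be uniformly close. Handling this requires using the hypothesis $d(v, \gamma_+) > L$ with $L$ large enough in terms of $r$ to exclude the configurations where $v$ sits near $y$ while $y$ is far from $y'$, absorbing the $r$-dependence entirely into $L$ and leaving the final bound $D$ dependent only on $\epsilon, R$.
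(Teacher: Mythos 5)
Your first step (producing $u\in\gamma'$ with $d(v,u)\le\varphi(\kappa/2)$ from Lemma \ref{floydtrans}, Floyd-length decay and Karlsson's Visibility Lemma) coincides with the paper's. Where you diverge is the analysis once $u$ is deep in some $Y\in\mathbb P$: the paper stays entirely on $\gamma'$, applying the Floyd-metric ``closure'' estimate of \cite[Proposition 4.1.2]{GePo1} together with Karlsson to conclude directly that one of the entry/exit points $x',y'$ of $\gamma'$ in $N_\epsilon(Y)$ lies within an $(\epsilon,R)$-controlled distance of $u$ (hence of $v$). You instead pull $Y$ back to $\gamma$, invoke the contrapositive of Lemma \ref{uniformtrans} on the transitional point $v$, and then transfer the conclusion across via projection estimates. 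This is more elementary (it avoids the closure inequality) and more geometrically transparent, but it opens a door the paper's proof keeps shut: because the projections of $\gamma_+$ and $\gamma'_+$ onto $Y$ differ a priori by up to $r+M$, your route has to explicitly handle that $r$-dependence, whereas the paper never compares the two geodesics on the $\gamma_+$ side at all.

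Your exit-side discussion is therefore the right place to look, but as written it is a gap: you say $L$ large ``excludes the configurations where $v$ sits near $y$ while $y$ is far from $y'$'' without giving the mechanism, and the phrase ``exclude'' is misleading — the exit case is not ruled out; rather, $L$ large forces $y$ and $y'$ to coincide up to an $r$-independent constant. Concretely: if $d(v,y)\le C_0(\epsilon,R)$ and $d(v,\gamma_+)>L$, then $d(\gamma_+,Y)>L-C_0-\epsilon$ and hence $d(\gamma'_+,Y)>L-C_0-\epsilon-r$; choosing $L=L(r,\epsilon,R)$ with $L>C_0+\epsilon+\tau+r$ makes the geodesic $[\gamma_+,\gamma'_+]$ (of length $<r$) miss $N_\tau(Y)$, so the $(\tau,D)$-contracting property gives $\diam{\proj_Y([\gamma_+,\gamma'_+])}<D$, i.e. $\proj_Y(\gamma_+)$ and $\proj_Y(\gamma'_+)$ are $D$-close (not merely $(r+M)$-close), and thus $d(y,y')$ is bounded by a constant in $\epsilon,R$ only. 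From there Lemma \ref{entrytrans} supplies the required transition point on $\gamma'$. Also, your assertion that the bounded projection property ``forces $\gamma$ to enter $N_\epsilon(Y)$'' is not automatic once $r\gtrsim R$; but the argument does not really need it — the contrapositive of Lemma \ref{uniformtrans} together with Lemma \ref{contracting}(2) already places $v$ within an $r$-independent constant of $\proj_Y(\gamma_-)$ or $\proj_Y(\gamma_+)$, after which the transfer to $x'$ or $y'$ proceeds as above without introducing entry/exit points of $\gamma$ itself.
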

\begin{proof}
We first define our constants. Let $\kappa=\kappa(\epsilon, R)$ and $r_0
=\varphi(\kappa/2)$, where $\kappa, \varphi$ are given
by Lemmas \ref{floydtrans}, \ref{karlssonlem} respectively.  

Denote
$\kappa' =\lambda^{-r_0}\cdot \kappa/2$, where $\lambda>1$  satisfies Theorem \ref{Floyd}. By 
\cite[Proposition 4.1]{GePo4}, there exists $D_0=D(\epsilon, \kappa')>0$ such that the
following holds for any $Y \in \mathbb P$ and any $w\in G$,
\begin{equation}\label{closure}
N_\epsilon(Y) \setminus (B_{\rho_w}(q, \kappa'/6) \cap G) \subset
B(w, D_0),
\end{equation}
where $q$ is the one-singleton limit set of $Y$ in $\pG$.

Since $d(\alpha_+, \gamma_+)\le r$, we can choose $L > r$ large enough such that
$\rho_v(\gamma_+, \gamma'_+) < \kappa/2$.

Let $v \in \gamma$ be an $(\epsilon, R)$-transition point such that $d(v, \gamma_+) >L$. By Lemma \ref{floydtrans}, $\rho_v(\gamma_-,
\gamma_+) > \kappa$ and then $\rho_v(\gamma_-, \gamma'_+) >
\kappa/2$. So there exists $w \in \gamma'$
such that $d(v, w) < r_0$. By the inequality (\ref{equivmetric}), it follows that
$\rho_w(\gamma'_-, \gamma'_+) > \kappa'$.

Assume that $w$ is $(\epsilon, R)$-deep in some $V \in \mathbb P$,
otherwise we are done.  Let $x, y$ be the entry and exit points of
$\gamma'$ in $N_\epsilon(V)$ respectively. Then $x, y$ are $(\epsilon, R)$-transitional by Lemma \ref{entrytrans}.

\begin{claim}
$\min\{d(w, x),\;d(w, y)\} <D_0 + \varphi(\kappa'/3)$.
\end{claim}
\begin{proof}[Proof of Claim]
If $\rho_w(x, y) >
\kappa'/3$, then by (\ref{closure}), we have $\{x, y\} \cap B(w,
D_0) \neq \emptyset$.  Otherwise, we have $$\max\{\rho_w(\gamma'_-, x), \;\rho_w(y,
\gamma'_+)\} > \kappa'/3.$$ It follows by Lemma \ref{karlssonlem}
that $\min\{d(w, y),\;d(w, x)\} < \varphi(\kappa'/3)$.
\end{proof}

By the claim above, we have that one of $\{x, y\}$ is an $(\epsilon, R)$-transition point which has a distance at most $D= D_0+r_0+\varphi(\kappa'/3)$ to $v$. The proof is complete.
\end{proof}

\subsection{Horofunctions}
In this subsection, we shall define horofunctions cocycles at conical
points. We start with a horofunction associated to a geodesic ray.
\begin{defn}
Let $\gamma$ be an infinite geodesic ray (with length
parametrization) in $\Gx$. A \textit{horofunction $b_\gamma: G \to \mathbb
R$ associated to $\gamma$} is defined as follows
$$b_\gamma(x) = \lim\limits_{t \to \infty} (d(x, \gamma(t)-t),$$
for any $x \in G$.
\end{defn}

Let $\kappa>0$ be given by Lemma \ref{floydtrans}, $r>0$ given by Lemma
\ref{charconical} and $\varphi$ given by Lemma \ref{karlssonlem}. We
fix the constant $$C =\max\{ 2\varphi(\kappa/2), 4r\},$$ until the end of this
subsection.

\begin{lem}\label{horofunc}
Let $\xi \in \pG$ be a conical point at which terminate two
geodesics $\gamma, \gamma'$. Then the following holds
$$|(b_\gamma(x)-b_\gamma(y))-(b_{\gamma'}(x)-b_{\gamma'}(y)| < C, $$
for any $x, y \in G$.
\end{lem}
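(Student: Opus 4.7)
The plan is to compare the two horofunctions along a common sequence of reference points supplied by the conical characterization in Lemma \ref{charconical}. Before doing so, I would record a preliminary observation: for any geodesic ray $\eta$ in $\Gx$ the map $t \mapsto d(x, \eta(t)) - t$ is non-increasing by the triangle inequality $d(x, \eta(t+s)) \le d(x, \eta(t)) + s$, and is bounded below by $-d(x, \eta_-)$; hence the limit defining $b_\eta(x)$ exists. Consequently $d(x, \eta(t)) - d(y, \eta(t))$ converges to $b_\eta(x) - b_\eta(y)$, and the value is unchanged if $t$ is replaced by any sequence of parameters going to infinity.

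Next I would apply Lemma \ref{charconical} to obtain a sequence $\{g_n\} \subset G$, which one may assume to satisfy $d(1, g_n) \to \infty$ (passing to a subsequence of distinct elements if necessary, since $G$ is discrete), such that every geodesic ending at $\xi$ meets $B(g_n, r)$ for all but finitely many $n$. Applied to $\gamma$ and $\gamma'$ simultaneously, this produces points $u_n \in \gamma \cap B(g_n, r)$ and $u_n' \in \gamma' \cap B(g_n, r)$; in particular $d(u_n, u_n') \le 2r$, and the arc-length parameters $t_n = d(\gamma_-, u_n)$ and $t_n' = d(\gamma'_-, u_n')$ both tend to infinity.

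The preliminary observation then lets me evaluate the two horofunction differences along these subsequences, namely
\[
b_\gamma(x) - b_\gamma(y) = \lim_n [d(x, u_n) - d(y, u_n)], \qquad b_{\gamma'}(x) - b_{\gamma'}(y) = \lim_n [d(x, u_n') - d(y, u_n')],
\]
and subtract. Two applications of the reverse triangle inequality, one anchored at $x$ and one at $y$, combined with $d(u_n, u_n') \le 2r$, bound the difference inside the limit by $4r$, which is in turn at most $C$ by the choice of $C$. Passing to the limit yields the claimed inequality.

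The argument is essentially a direct consequence of Lemma \ref{charconical}, so the only delicate point will be verifying that the reference sequence $\{g_n\}$ diverges in $G$; this is the technicality I would be most careful about, and it is handled by the discreteness of $G$ together with the fact that the infinite geodesics ending at $\xi$ escape every bounded region.
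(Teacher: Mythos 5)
Your proposal follows the same route as the paper's own proof: invoke Lemma \ref{charconical} to produce close pairs $v_n\in\gamma$, $v_n'\in\gamma'$ with $d(v_n,v_n')\le 2r$, evaluate both Busemann differences along these sequences, and bound the difference by $4r\le C$ via the triangle inequality. Your extra care in verifying that the limits can be taken along the subsequences (monotonicity of $t\mapsto d(x,\eta(t))-t$) and that the reference sequence escapes to infinity is justified detail that the paper leaves implicit, but the argument is the same.
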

\begin{proof}
Note that $\xi$ is a conical point. By Lemma \ref{charconical}, there
exists a sequence of points $z_n \in G$ such that $d(z_n, \gamma),
d(z_n, \gamma') < r$.  Then we obtain two sequences of points $v_n
\in \gamma,\; v_n'\in \gamma'$ such that $d(v_n, v_n') \le 2r$. Thus,
$$
\begin{array}{lr}
\;\;\;\; |(b_\gamma(x)-b_\gamma(y))-(b_{\gamma'}(x)-b_{\gamma'}(y))| \\
 =\lim\limits_{n \to
\infty} |(d(v_n, x)-d(v_n, y))-(d(v_n', x)-d(v_n', y))| \\
\le 4r.
\end{array}
$$
The conclusion follows.
\end{proof}

We are now going to define a notion of a horofunction cocycle. Up to
some uniform constant, it is independent of the choice of geodesic rays .

Let $\xi \in \pG$ be a conical point. Let $B_\xi(\cdot,\cdot):
G\times G \to \mathbb R$ be defined as follows:
$$\forall x, y \in G: B_\xi(x, y) = \sup \limits_{\forall \gamma:
\gamma_+=\xi} \{b_\gamma(x) -b_\gamma(y)\}.$$ For any $z \in
G$, let $\forall x, y \in G: B_z(x, y) = d(z, x) - d(z, y).$ By
Lemma \ref{horofunc}, we see that $B_\xi(x, y)$ differs from
$$\forall \gamma, \gamma_+=\xi: \;b_\gamma(x) -b_\gamma(y)$$ by a
uniform constant depending only on $G$. In what follows, we usually
view $b_\gamma(x) -b_\gamma(y)$ as a representative of $B_\xi(x,y)$
and deal with it directly. In this case, we suppress the index
$\gamma$ for the convenience.

The following lemma is crucial in establishing the quasi-conformal
density on Bowditch boundary.

\begin{lem}\label{rephorofunc}
Let $\xi \in \pG$ be a conical point. For any $x, y \in G$, there is
a neighborhood $V$ of $\xi$ such that the following property holds:
$$|B_\xi(x, y)-B_z(x, y)| < 4C, \forall z \in V \cap G.$$
\end{lem}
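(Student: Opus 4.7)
The plan is to relate $B_z(x, y) = d(z, x) - d(z, y)$ to a horofunction representative of $B_\xi(x, y)$ along a well-chosen geodesic, then bound everything in terms of $C$. I fix a geodesic $\gamma$ with $\gamma_- = y$ and $\gamma_+ = \xi$, so that $b_\gamma(y) = 0$ and $b_\gamma(x) - b_\gamma(y) = \lim_{t \to \infty}[d(x, \gamma(t)) - d(y, \gamma(t))]$. By Lemma \ref{horofunc}, any two horofunction differences at $\xi$ agree up to $C$, hence $|B_\xi(x,y) - (b_\gamma(x) - b_\gamma(y))| \le C$, and it suffices to show $|B_z(x,y) - (b_\gamma(x) - b_\gamma(y))| < 3C$ for $z$ in a suitable neighborhood of $\xi$.

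The construction centers on picking a point $v = \gamma(T)$ at large parameter $T$ which is also an $(\epsilon_0, R)$-transition point of $\gamma$. Such transition points exist arbitrarily far along $\gamma$: if $\gamma$ eventually remained in the $\epsilon_0$-neighborhood of a single peripheral coset $Y \in \mathbb P$, then $\xi$ would be the parabolic limit of $Y$, contradicting conicality; otherwise $\gamma$ transitions between peripheral cosets infinitely often, and Lemma \ref{entrytrans} produces a transition point near each such exit. For $T$ sufficiently large I can then arrange simultaneously: (i) $|(d(x,v) - d(y,v)) - (b_\gamma(x) - b_\gamma(y))| < C$, from the convergence defining the horofunction; and (ii) $\rho_v(x, y) < \kappa/4$, from the estimate $\rho_v(x, y) \le d(x, y)\, \lambda^{-d(v, \{x,y\}) + d(x,y)}$ obtained by Floyd-measuring the geodesic $[x, y]$. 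Lemma \ref{floydtrans} applied to finite truncations of $\gamma$ and passage to the limit then gives $\rho_v(y, \xi) \ge \kappa$, whence $\rho_v(x, \xi) > 3\kappa/4$ by the triangle inequality.

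Next, I define $V = \{z \in G \cup \pG : \rho_v(z, \xi) < \kappa/4\}$, a neighborhood of $\xi$ in the shortcut topology. For any $z \in V \cap G$, the triangle inequality yields $\rho_v(x, z),\, \rho_v(y, z) > \kappa/2$. By the Visibility Lemma (Lemma \ref{karlssonlem}) any geodesic $[x, z]$ passes within $\varphi(\kappa/2) \le C/2$ of $v$; picking a nearest point $v' \in [x, z]$ to $v$ and using $d(x,z) = d(x, v') + d(v', z)$ together with $d(v, v') \le C/2$ yields $|d(x, z) - d(x, v) - d(v, z)| \le C$, and similarly for $y$. Subtracting cancels the $d(v, z)$ terms and gives $|[d(x, z) - d(y, z)] - [d(x, v) - d(y, v)]| \le 2C$. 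Combining this with (i) and the initial reduction via Lemma \ref{horofunc} produces the desired bound $|B_z(x, y) - B_\xi(x, y)| < 4C$. The main obstacle is ensuring all three requirements on $v$ can be met at once; this reduces to the unboundedness of transition points on a geodesic ending at a conical point, and the constants balance because $C$ was defined precisely so that $2\varphi(\kappa/2) \le C$.
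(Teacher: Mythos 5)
Your proof is correct and follows essentially the same route as the paper: fix a geodesic ray $\gamma$ to $\xi$, use conicality to produce $(\epsilon, R)$-transition points arbitrarily far along $\gamma$, pick such a $v$ far enough that $\rho_v(x,y) \le \kappa/4$ while $\rho_v(\gamma_-, \xi) \ge \kappa$ by Lemma~\ref{floydtrans}, take $V$ a small $\rho_v$-ball about $\xi$, and invoke the Visibility Lemma so that $[x,z]$ and $[y,z]$ pass within $\varphi(\kappa/2)$ of $v$, closing the estimate by the triangle inequality. The only cosmetic differences are the choice of initial point of $\gamma$ ($y$ versus $x$), defining $V$ via $\rho_v$ rather than $\rho_1$, and your symmetric use of nearest points on both $[x,z]$ and $[y,z]$, which makes the final cancellation of $d(v,z)$ a little more transparent than the paper's compressed display.
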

\begin{proof}
Let $\gamma$ be a geodesic between $x$ and $\xi$. For simplicity, we let $B_\xi(x, y)=b_\gamma(x)-b_\gamma(y)$. Recall that $b_\gamma(x)$ is
the limit of a non-increasing function. Then for given $C
>0$, there exists a number $L>0$ such that
\begin{equation}\label{vlimit}
|b_\gamma(x)-b_\gamma(y) - (d(x, v)-d(y, v))| \le C
\end{equation}
for any $v\in \gamma$ with $d(x, v) >L$. 

Let $\epsilon, R$ be chosen as in Convention \ref{epsilonR}.  We
first note: 
\begin{claim}
There exists a sequence of $(\epsilon, R)$-transition
points in $\gamma$ tending to $\xi$. 
\end{claim}
\begin{proof}[Proof of the Claim] 
In fact, $\gamma$ exits the
$\epsilon$-neighborhood of every $Y \in \mathbb P$ which
it enters into. Let $u, v$ be the entry and exit
points of $\gamma$ in $N_\epsilon(Y)$ respectively for some $Y \in \mathbb P$ such that $d(u, x) \ge R$. If the middle
point $z$ of $[u, v]_\gamma$ is $(\epsilon, R)$-deep in $Y$, then
$u, v$ are both $(\epsilon, R)$-transition points in $\gamma$ by
Lemma \ref{entrytrans}. Otherwise, either $z$ is $(\epsilon,
R)$-transitional or $z$ is $(\epsilon, R)$-deep in some $Y' \neq Y
\in \mathbb P$. The former case already gives an $(\epsilon,
R)$-transition point $z$. In the latter case, the entry and exit
point of $\gamma$ in $N_\epsilon(Y')$ is $(\epsilon, R)$-transition
points in $\gamma$.  This proves our claim.
\end{proof}

Recall that $\rho_v(x, \xi) \ge \kappa$ for any $(\epsilon, R)$-transition point $v$ of $\gamma$ by Lemma
\ref{floydtrans}.  By the above claim, we choose $v \in \gamma$ such that $d(x, v) \ge L$,  and by Lemma
\ref{karlssonlem} such that \begin{equation}\label{rephorofuncE1}
\rho_v(x, y) \le \kappa/4.
\end{equation}

Let $\lambda>1$ be provided by Theorem \ref{Floyd}.  Consider $V = B_{\rho_1}(\xi, \lambda^{-d(1, v)}\kappa/4)$, which is the metric ball at $\xi$ radius $\lambda^{-d(1, v)}\kappa/4$ with respect to
the metric $\rho_1$. We shall show that $V$ is the desired
neighborhood.

For any $z \in V \cap G$, the inequality (\ref{equivmetric})
implies that $\rho_v(z, \xi) < \kappa/4$. We then obtain by (\ref{rephorofuncE1}) the following
\begin{equation}\label{rephorofuncE2}
\rho_v(y, z) \ge \kappa/2, \; \rho_v(x, z) > \kappa/2.
\end{equation}
Thus, by Lemma \ref{karlssonlem}, there exists $w
\in \gamma':=[y, z]$ such that $d(v, w) \le \varphi(\kappa/2)$. By (\ref{vlimit}), it follows that
$$\begin{array}{rl}
& \;\; |B_\xi(x, y)-(d(x, z)-d(y, z))|\\
& \le C + |(d(x,
v)-d(y, v))-(d(x, z)-d(y, z))| \\
&\le 2C + |(d(x, v) -d(x, z)) -(d(y, w) - d(y, z)) | +d(v, w) \\
&\le 2C + 2d(v, w) \le 4C.
\end{array}
$$
for $\forall z \in V \cap G$. This finishes the proof.
\end{proof}

\section{Visual and quasi-conformal densities}\label{Section3}
Suppose that $G$ acts properly and isometrically on a proper
geodesic metric space $(X, d)$. Assume, in addition, that $X$ admits
a compactification with at least 3 points, denoted by $\pG$, such that $X \cup \pG$ is a
compact metrizable space, and the action of $G$ on $X$ extends to a
minimal convergence group action on $\pG$. Denote by $\cG$ the set of conical limit points in $\pG$. Fix a basepoint $o \in X$.  

This section is to present a general machinery of the quasi-conformal density on $\pG$, which has the application to a relatively hyperbolic group $\GP$ with the topological boundary $\pG$.

\subsection{Quasi-conformal densities}
A Borel measure $\mu$ on a topological space $T$ is \textit{regular}
if $\mu(A) = \inf\{\mu(U): A \subset U, U$ is open$\}$ for any Borel
set $A$ in $T$. The $\mu$ is called \textit{tight} if $\mu(A) =
\sup\{\mu(K): K \subset A, K$ is compact$\}$ for any Borel set $A$
in $T$.

Recall that \textit{Radon} measures on a topological space $T$ are
finite, regular, tight and Borel measures. It is well-known that all
finite Borel measures on compact metric spaces are Radon. Denote by
$\mathcal M(\pG)$ the set of finite positive Radon measures on
$\pG$. Then $G$ possesses an action on $\mathcal M(\pG)$ given by
$g_*\mu(A) = \mu(g^{-1}A)$ for any Borel set $A$ in $\pG$.

Endow $\mathcal M(\pG)$ with the weak-convergence topology. Write
$\mu(f) = \int f d\mu$ for a continuous function $f \in C^1(\pG)$.
Then $\mu_n \to \mu$ for $\mu_n \in \mathcal M(\pG)$ if and only if $\mu_n(f)
\to \mu(f)$ for any $f \in C^1(\pG)$,  equivalently, if and only if,
$\liminf\limits_{n \to \infty}\mu_n(U) \ge \mu(U)$ for any open set
$U \subset \pG$.

\begin{defn}
Let $\sigma \in [0, \infty[$. A $G$-equivariant map $$\mu: G \to
\mathcal M(\pG),\; g \to \mu_g$$ is called a
\textit{$\sigma$-dimensional visual density} on $\pG$ if $\mu_g$ are
absolutely continuous with respect to each other and their
Radon-Nikodym derivatives satisfy
\begin{equation}\label{vdensity}
\exp(-\sigma d(go, ho)) \prec \frac{d\mu_{g}}{d\mu_{h}}(\xi)  \prec
\exp(\sigma d(go, ho)),
\end{equation}
for any $g, h \in G$ and  $\mu_h$-a.e. point $\xi \in \pG$.

Here $\mu$ is called \textit{$G$-equivariant} if $\mu_{hg}(A) = h_*\mu_{g}(A)$ for any Borel set $A \subset
\partial G$.
\end{defn}

\begin{rem}
The terminology ``visual density" is due to Paulin in \cite{Paulin}
to generalize Hausdorff measures of visual metrics. As $G$ acts minimally on $\pG$ with $|\pG|>3$, $G$ has no global fixed point on $\pG$. Then $\mu_g$
is not an atom measure.
\end{rem}

The following assumption is motivated by Lemma \ref{rephorofunc}. Given $z \in Go$, let $B_z(x, y) = d(x,
z) -d(y,z)$ for $x, y \in X$. 
\begin{assump}\label{AssumpA}
There exists a constant $C>0$ and a family of functions
$$\{B_\xi(\cdot, \cdot): Go \times Go \to \mathbb R\}_{\xi \in
\cG}$$ such that for any $x, y \in X$, there is a neighborhood $V$ of $\xi \in \cG$
in $X \cup \pG$ with the following property:
$$|B_\xi(x, y)-B_z(x, y)| < C, \forall z \in V \cap Go.$$
\end{assump}

The reader should keep in mind that the upper bound in
(\ref{vdensity}) is usually not applicable in practice. The following definition improves on
the upper bound.
\begin{defn}
Let $\sigma \in [0, \infty[$.  A $\sigma$-dimensional visual density
$$\mu: G \to \mathcal M(\pG), \;g \to \mu_g$$ is a
\textit{$\sigma$-dimensional quasi-conformal density} if for any $g,
h \in G$ the following holds
\begin{equation}\label{cdensity}
\frac{d\mu_{g}}{d\mu_{h}}(\xi) \asymp \exp(-\sigma B_\xi (go, ho)),
\end{equation}
for $\mu_h$-a.e. conical points $\xi \in \pG$.
\end{defn}

\begin{rem}
By \ref{AssumpA}, the function $B_\xi (\cdot, \cdot)$ was defined only for conical points $\xi$. So (\ref{cdensity}) is understood for almost every conical points $\xi \in \pG$ in the measure $\mu_h$.
\end{rem}

By the equivariant property of $\mu$, we see the following result.
\begin{lem}
Let $\{\mu_v\}_{v\in G}$ be a $\sigma$-dimensional quasi-conformal density on $\pG$.
Then the support of any $\mu_v$ is $\pG$.
\end{lem}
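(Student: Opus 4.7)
The plan is to combine three ingredients supplied by the setup: the $G$-equivariance of the family $\{\mu_v\}_{v\in G}$, the mutual absolute continuity of the measures guaranteed by (\ref{vdensity}), and the fact that, by assumption, $G$ acts minimally on $\pG$ as a convergence group. The quasi-conformal refinement (\ref{cdensity}) will play no role; only the coarser visual density bound is needed.

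First I would use equivariance to pin down how the supports transform under $G$. Taking $g$ to be the identity in the relation $\mu_{hg}(A)=h_{*}\mu_{g}(A)$ gives $\mu_h = h_{*}\mu_1$ for every $h\in G$, and therefore $\mathrm{supp}(\mu_h)=h\cdot \mathrm{supp}(\mu_1)$. Next, (\ref{vdensity}) says the Radon-Nikodym derivatives $d\mu_g/d\mu_h$ are bounded above and below by strictly positive constants (depending on $g,h$), so every pair $\mu_g,\mu_h$ is mutually absolutely continuous and has the same support. Combining these two observations gives $\mathrm{supp}(\mu_1)=h\cdot \mathrm{supp}(\mu_1)$ for every $h\in G$, so $\Lambda:=\mathrm{supp}(\mu_1)$ is a closed $G$-invariant subset of $\pG$.

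To conclude I would invoke the minimality assumption from the protocol: the only non-empty closed $G$-invariant subset of $\pG$ is $\pG$ itself. Since $\mu_1$ is a non-zero finite positive Radon measure (implicit in calling $\{\mu_v\}$ a density), $\Lambda$ is non-empty. Therefore $\Lambda=\pG$, and, using the first paragraph once more, $\mathrm{supp}(\mu_v)=v\cdot \pG=\pG$ for every $v\in G$.

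There is no real obstacle here; the only point that deserves a moment of attention is ensuring $\Lambda\neq\emptyset$, which only requires knowing that the visual density is non-trivial. Everything else is a direct bookkeeping consequence of equivariance, absolute continuity and minimality, and so the proof should be quite short.
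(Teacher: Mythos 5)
Your proof is correct and follows essentially the same route as the paper's (unpublished-in-final-form) argument: use equivariance together with mutual absolute continuity from the visual density bound to see that $\mathrm{supp}(\mu_1)$ is a nonempty closed $G$-invariant set, then invoke minimality of the $G$-action on $\pG$.
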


The notion of a (partial) shadow/cone is key to our study. See Examples \ref{contractingsystem}, for which the following convention is fulfilled. 

\begin{conv}\label{ConvContracting}
Let $\mathbb Y$ be a $G$-finite contracting system in $X$ with the bounded intersection property such that,   for each $Y\in \mathbb Y$, the stabilizer $G_Y$ acts co-compactly on either $Y$ or $\partial Y$. We consider below the transition points defined with respect to $\mathbb Y$.
\end{conv}


\begin{defn}[Shadow and Partial Shadow]
Let $r, \epsilon, R \ge 0$ and $g \in G$. The \textit{shadow $\Pi_r(go)$} at $go$ is the set of points
$\xi \in \partial G$ such that there exists SOME geodesic $\gamma=[o, \xi]$
intersecting $B(go, r)$.

The \textit{partial shadow $\Pi_{r, \epsilon,
R}(go)$} is the set of points $\xi \in \Pi_r(go)$ where, in addition, the geodesic $\gamma$ as above contains an $(\epsilon, R)$-transition point $v$ in $B(go, 2R)$.

The \textit{strong shadow $\varPi_r(go)$} at $go$
is the set of points $\xi \in \partial G$ such that for ANY geodesic
$[o,\xi]$ we have $[o,\xi] \cap B(go, r) \ne \emptyset$.
\end{defn}
\begin{rem}
The terminology ``strong shadow" corresponds to the usual notion of ``shadow'' in PS-measures,  see \cite{Coor} for example. It plays an essential role only in the proof of Proposition \ref{ballgrowth}.

\end{rem}

Inside the space $X$, the (partial) shadowed region motivates the notion of a (partial) cone.

\begin{defn}[Cone and Partial Cone]\label{defnpcone}
Let $g \in G$ and $r \ge 0$. The \textit{cone $\Omega_r(go)$} at $go$
is the set of elements $h$ in $G$ such that there exists SOME
geodesic $\gamma=[o, ho]$ in $X$ such that $\gamma \cap B(go, r) \neq
\emptyset$.

The \textit{partial cone
$\Omega_{r, \epsilon, R}(go)$} at $go$ is the set of elements $h \in
\Omega_r(go)$ such that one of the following statements holds.
\begin{enumerate}
\item $d(o, ho) \le d(o, go) + 2R,$
\item
The geodesic $\gamma$ as above contains an $(\epsilon,
R)$-transition point $v$ such that $d(v, go) \le 2R$.
\end{enumerate}
 
\end{defn}

For $r=0$, we omit the index $r$ and write $\Omega(go), \Omega_{\epsilon, R}(go)$
for simplicity.

For $\Delta \ge 0, n \ge 0$, define
$$
A(go, n, \Delta) = \{h \in G:  n-\Delta \le d(o, ho) - d(o, go) <
n+\Delta\},
$$
for any $g \in G$. 
\subsection{Patterson-Sullivan measures}
The aim of this subsection is to recall a construction of Patterson
and to show that Patterson-Sullivan measure is a quasi-conformal
density.


We associate the Poincar\'e series to
$Go$:
$$\PS{G} = \sum\limits_{g \in G} \exp(-sd(o, go)), \; s \ge 0,$$
for which the \textit{critical exponent} of $\PS{G}$ is given by
$$\g G = \limsup\limits_{R \to \infty} \log \sharp N(o, R)/R.$$
The group $G$ is of \textit{divergent} (resp.
\textit{convergent}) type with respect to $d$ if $\PS{G}$ is divergent (resp. convergent) at
$s=\g G$. It is clear that the definition does not depend on the choice of $o$.

We start by constructing a family of measures $\{\mu_v^s\}_{v\in G}$ supported on $Go$ for any $s >\g G$. First,
assume that $\PS{G}$ is divergent at $s=\g G$. Set
$$\mu^s_v = \frac{1}{\PS{G}} \sum\limits_{g \in G} \exp(-sd(vo, go)) \cdot \dirac{go},$$
where $s >\g G$ and $v \in G$. Note that $\mu^s_1$ is a probability
measure.

If $\PS{G}$ is convergent at $s=\g G$, Patterson introduced in
\cite{Patt} a monotonically increasing function $H: \mathbb R_{\ge 0} \to \mathbb R_{\ge 0}$ with the
following property:
\begin{equation}\label{patterson}
\forall \epsilon >0, \exists t_\epsilon, \forall t
> t_\epsilon, \forall a>0: H(a+t) \le \exp(a\epsilon) H(t).
\end{equation}
such that the series $\PSS{G}:=\sum\limits_{g \in G} H(d(vo, go)) \exp(-sd(v,
go))$ is divergent for $s \le \g G$ and convergent for $s>\g G$.
Then define measures as follows:
$$\mu^s_v = \frac{1}{\PSS{G}} \sum\limits_{g \in G} \exp(-sd(vo, go)) H(d(vo, go)) \cdot \dirac{go},$$
where $s >\g G$ and $v \in G$.

Choose $s_i \to \g G$ such that $\mu_v^{s_i}$ are convergent in
$\mathcal M(\pG)$. Let $\mu_v = \lim \mu_v^{s_i}$ be the limit
measures, which are so called \textit{Patterson-Sullivan measures}. Note that $\mu_1(\pG) = 1$. In the sequel, we write PS-measures as shorthand for
Patterson-Sullivan measures.

\begin{prop}\label{prequasiconf}
PS-measures are $\g G$-dimensional quasi-conformal densities.
\end{prop}
\begin{proof}
With \ref{AssumpA}, the proof goes exactly as that of Th\'eor\`eme
5.4 in \cite{Coor}.
\end{proof}
In Sections \ref{Section4} \& \ref{Section5}, we consider the group action of $G$ on its Cayley graph $\Gx$, which is compactified by $\pG$ such that $\Gx\cup \pG$ is endowed with the quotient topology of $\Gf$. 
By Lemma \ref{rephorofunc}, \ref{AssumpA} holds in this setting and thus Proposition \ref{prequasiconf} applies. 

In Section \ref{Section7}, we consider a cusp-uniform action of $G$ on a hyperbolic space $X$, which is compactified by $\pX=\pG$. In this setting Proposition \ref{prequasiconf} was already known in \cite{Coor}.


\section{Patterson-Sullivan measures on Bowditch boundary}\label{Section4}

In this section, we consider the PS-measures constructed on the
completion $\bar G:=\Gx \cup \pG$, where $\pG$ is the Bowditch boundary of
$G$ with shortcut metrics. In this case, we set the basepoint $o=1$. The generalities of Section \ref{Section3} shall get
specialized and simplified in this context with aid of the following
two facts:

\begin{enumerate}
\item
$G$ is divergent with respect to the word metric $d$.
\item
The Karlsson Visibility Lemma \ref{karlssonlem} holds for the
compactification $G \cup \pG$.
\end{enumerate}

\subsection{Shadow Lemma}
The key observation in the theory of PS-measures is the Sullivan
Shadow Lemma concerning the relation between the measure on boundary and
geometric properties inside.

We begin with some weaker forms of the Shadow Lemma (under weaker
assumption), with proofs following closely the presentation of
Coornaert in \cite[Section 6]{Coor}.


\begin{lem}[Shadow Lemma I]\label{weakombreI}
Let $\{\mu_v\}_{v \in G}$ be a $\sigma$-dimensional visual density on $\pG$. Then
there exists $r_0 > 0$ such that the following holds
$$
\exp(-\sigma d(1, g)) \prec \mu_1(\varPi_r(g)) \le
\mu_1(\Pi_r(g)),
$$
for any $r\ge r_0$ and $g \in G$.
\end{lem}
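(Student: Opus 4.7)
The right-hand inequality $\mu_1(\varPi_r(g)) \le \mu_1(\Pi_r(g))$ is immediate from $\varPi_r(g)\subset\Pi_r(g)$, so the content is the lower bound $\mu_1(\varPi_r(g))\succ\exp(-\sigma d(1,g))$. My plan is to use $G$-equivariance to reduce this to a uniform lower bound for the measure of a ``strong shadow at $1$ seen from $g^{-1}$,'' and then combine the Visibility Lemma \ref{karlssonlem} on the shortcut metric with the minimality of the $G$-action on $\pG$.

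First, by $G$-equivariance $\mu_g = g_{*}\mu_1$, and the visual density inequality (\ref{vdensity}) gives
$$\mu_1(\varPi_r(g)) \;=\; \int_{\varPi_r(g)} \frac{d\mu_1}{d\mu_g}\, d\mu_g \;\succ\; \exp(-\sigma d(1,g))\,\mu_1(g^{-1}\varPi_r(g)).$$
Since applying $g^{-1}$ sends a geodesic $[1,\xi]$ to a geodesic $[g^{-1},g^{-1}\xi]$, one identifies
$$g^{-1}\varPi_r(g) \;=\; \{\eta\in\pG: \text{every geodesic } [g^{-1},\eta] \text{ meets } B(1,r)\},$$
so it suffices to prove the uniform lower bound $\mu_1(g^{-1}\varPi_r(g))\succ 1$.

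Next I would bound the complement via the Visibility Lemma. If some geodesic $\gamma=[g^{-1},\eta]$ avoids $B(1,r)$, then $d(1,\gamma)>r$; since Lemma \ref{karlssonlem} extends to the shortcut metric $\rho_1$ on $\Gx\cup\pG$, this forces $\rho_1(g^{-1},\eta)<\kappa(r)$ for some function $\kappa$ with $\kappa(r)\to 0$ as $r\to\infty$. Hence
$$\pG\setminus g^{-1}\varPi_r(g) \;\subset\; B_{\rho_1}(g^{-1},\kappa(r)).$$
By the standard argument (the one sketched in the commented-out proof for quasi-conformal densities: $G$-equivariance plus mutual absolute continuity plus minimality of the convergence action on $\pG$, using $|\pG|>2$), one has $\mathrm{supp}(\mu_1)=\pG$. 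Pick any two distinct points $\xi_1,\xi_2\in\pG$, set $d_0=\rho_1(\xi_1,\xi_2)>0$, fix $\delta<d_0/4$, and let $c=\min_{i=1,2}\mu_1(B_{\rho_1}(\xi_i,\delta))>0$. Now choose $r$ so large that $\kappa(r)<\delta$. Then every $\rho_1$-ball of radius $\kappa(r)$ is disjoint from at least one of the $B_{\rho_1}(\xi_i,\delta)$ and thus has $\mu_1$-mass at most $1-c$; applied at $x=g^{-1}$ this yields $\mu_1(g^{-1}\varPi_r(g))\ge c$, uniformly in $g$, which closes the argument.

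The main obstacle, in my view, is the last step: the visual density axiom by itself only gives multiplicative control of Radon-Nikodym derivatives and does not exclude a priori that $\mu_1$ concentrates on ever-smaller $\rho_1$-neighborhoods of $g^{-1}$ as $g$ varies. The anti-concentration estimate above, which converts minimality of the convergence action together with the presence of at least three boundary points into a uniform upper bound on small $\rho_1$-balls, is what does the work.
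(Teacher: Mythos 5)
Your proof is correct and follows the same overall skeleton as the paper's: reduce by $G$-equivariance and the visual-density bound to a uniform lower bound on $\mu_1\bigl(g^{-1}\varPi_r(g)\bigr)$, then use the Karlsson Visibility Lemma \ref{karlssonlem} (applied to the shortcut metrics) to show the complement is $\rho_1$-small, and finally invoke an anti-concentration property of $\mu_1$. Where you diverge is in the anti-concentration step. The paper sets $m_0$ equal to the maximal atom mass (which is $<\mu_1(\pG)$ because $\mu_1$ is not a Dirac mass, a consequence of minimality and $|\pG|>3$), then runs a regularity-plus-compactness contradiction argument to produce $\epsilon_0>0$ with the property that every set of $\rho_1$-diameter $\le\epsilon_0$ has measure $\le m$ for some fixed $m\in(m_0,\mu_1(\pG))$. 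You instead establish $\mathrm{supp}(\mu_1)=\pG$ directly (via mutual absolute continuity and minimality), pick two separated points and corresponding small balls of positive mass, and observe that every sufficiently small $\rho_1$-ball is disjoint from at least one of them, giving a uniform defect $c>0$. Your route is slightly more elementary (it replaces the compactness/regularity contradiction with a one-line pigeonhole on two fixed balls) while the paper's route gives the somewhat stronger uniform statement ``small diameter $\Rightarrow$ small mass,'' which they reuse elsewhere; for the purposes of this lemma the two are interchangeable. One small omission in your write-up: you should note the trivial case $d(1,g)\le r$, where $\varPi_r(g)=\pG$, so the Visibility Lemma step can be applied only when $d(1,g)>r$; the paper handles this explicitly in the Claim.
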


\begin{proof}
Let $m_0$ be the maximal value over atoms of $\mu_1$. Since $\mu_1$
is not an atom measure, it follows that $m_0 < \mu_1(\partial G)$. We consider the shortcut metric $\rho_1$ on $\pG$ in this proof.

Fix $m_0 < m < \mu_1(\partial G)$. There is a constant
$\epsilon_0 > 0$ such that any subset of diameter at most $\epsilon_0$
has a measure at most $m$. Indeed, if not, then there are a sequence
of positive numbers $\epsilon_n \to 0$ and subsets $X_n$ of diameter
at most $\epsilon_n$ such that $\mu_1(X_n) > m$. Then up to passage
of a subsequence,  $\{X_n\}$ converges to a point $p \in \pG$. Since
$\mu_1$ is regular, we have $\mu_1(p) = \inf\{\mu_1(U)\}$, where the
infimum is taken over all open sets $p \in U$. Thus, $\lim_{n\to \infty}
\mu_1(X_n) = \mu_1(p)$. This contradicts to the choice of $m$.

The following fact is a consequence of Visibility Lemma
\ref{karlssonlem}. 

\begin{claim}\label{bdcomp}
Given any $\epsilon > 0$, there is a constant $r_0 > 0$ such that
the following holds $$\diam{\pG\setminus g^{-1} \varPi_r(g)} <
\epsilon$$ for all $g \in G$ and $r > r_0$.
\end{claim}
\begin{proof}[Proof of Claim]
Given $\epsilon > 0$, let $r_0 = \varphi(\epsilon/2)$ be the
constant provided by Lemma \ref{karlssonlem}. Assume that $d(g, 1)
> r >r_0$. Otherwise, $\varPi_r(g)=\pG$ and there is nothing to do.

Observe that $g^{-1} \varPi_r(g)$ is the set of boundary points $\xi
\in\pG$ such that any geodesic $[g^{-1}, \xi]$ passes through the
closed ball $B(1, r)$. Let $\xi \in \pG \setminus g^{-1} \varPi_r(g)$.
Thus, some geodesic $[g^{-1}, \xi]$ misses the ball $B(1, r)$. Hence,
$\rho_1(g^{-1}, \xi) \le \epsilon/2$. This implies that $\diam{\pG
\setminus g^{-1} \varPi_r(g)} <\epsilon$.
\end{proof}
\begin{rem}
The same claim holds for $\pG\setminus g^{-1} \Pi_r(g)$ by the
same argument.
\end{rem}

Hence, for $\epsilon=\epsilon_0$, there is a constant $r_0>0$ given
by the Claim such that the following holds
$$\mu_1(\pG) - m < \mu_1( g^{-1}\varPi_r(g)),\; \forall r > r_0,$$
for all $g \in G$.

Since $\mu $ is a $\sigma$-dimensional visual density, there is a
constant $C>0$ such that the following holds
$$
C^{-1} \exp(-\sigma d(1, g)) \le
\frac{\mu_{g^{-1}}(g^{-1}\varPi_r(g))}{\mu_1( g^{-1}\varPi_r(g)) }.
$$
By the equivariant property of $\mu$, it follows that
$$\mu_{g^{-1}}(g^{-1}\varPi_r(g)) = \mu_1(\varPi_r(g)).$$ This
implies that
$$
(\mu_1(\pG)-m) C^{-1} \exp(-\sigma d(1, g)) \le
\mu_1(\varPi_r(g)),
$$
for all $g \in G$. This concludes the
proof.
\end{proof}

Let's denote by $\Pi^c_r(g)$ the set of all the conical points in
$\Pi_r(g)$.
\begin{lem}[Shadow Lemma II]\label{weakombreII}
Let $\{\mu_v\}_{v\in G}$ be a $\sigma$-dimensional quasi-conformal density on $\pG$.
Then there exists $r_0 > 0$ such that the following holds
$$
\mu_1(\Pi^c_r(g)) \prec \exp(-\sigma d(1,
g)) \cdot \exp(2\sigma r),
$$
for any $g\in G$ and $r > r_0$.
\end{lem}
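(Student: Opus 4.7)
The plan is to estimate $\mu_1(\Pi_r^c(g))$ by changing basepoint from $1$ to $g$ via the quasi-conformal density formula, and to exploit that $\xi \in \Pi_r^c(g)$ forces the horofunction cocycle $B_\xi(1,g)$ to be close to $d(1,g)$. Since the formula $\frac{d\mu_1}{d\mu_g}(\xi) \asymp \exp(-\sigma B_\xi(1,g))$ is only guaranteed at conical points, restricting to $\Pi_r^c(g)$ is essential here.

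The key technical step is the lower bound on $B_\xi(1,g)$ for $\xi\in\Pi_r^c(g)$. By definition of the partial shadow, there exists a geodesic ray $\gamma$ from $1$ to $\xi$ meeting $B(g,r)$ at some point $v$. Using the arc-length parametrization of $\gamma$, for large $t$ we have $d(1,\gamma(t))=t$ and
\[
d(g,\gamma(t))\le d(g,v)+d(v,\gamma(t))\le r+\bigl(t-d(1,v)\bigr)\le t-d(1,g)+2r,
\]
so $b_\gamma(1)-b_\gamma(g)\ge d(1,g)-2r$. By Lemma \ref{horofunc}, $B_\xi(1,g)$ agrees with $b_\gamma(1)-b_\gamma(g)$ up to a universal constant $C'$, hence $B_\xi(1,g)\ge d(1,g)-2r-C'$.

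Now one applies the quasi-conformal density inequality (\ref{cdensity}) with the pair $(1,g)$: there exists a constant $C''$ such that
\[
\frac{d\mu_1}{d\mu_g}(\xi)\le C''\exp(-\sigma B_\xi(1,g))\le C''\exp\bigl(-\sigma d(1,g)+2\sigma r+\sigma C'\bigr)
\]
for $\mu_g$-a.e.\ conical $\xi$. Integrating over $\Pi_r^c(g)$ gives
\[
\mu_1(\Pi_r^c(g))\prec \exp(-\sigma d(1,g))\cdot\exp(2\sigma r)\cdot\mu_g(\Pi_r^c(g)).
\]
Finally, by $G$-equivariance $\mu_g=g_*\mu_1$, so $\mu_g(\Pi_r^c(g))\le \mu_g(\partial G)=\mu_1(\partial G)<\infty$, and the announced bound drops out. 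The constant $r_0$ enters only to ensure $v$ can be taken far enough from $1$ for the limit defining $b_\gamma$ to be approximated (as in the proof of Lemma \ref{rephorofunc}); there is no further obstruction.

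The main subtlety, as opposed to a genuine obstacle, is the ambiguity in the definition of the cocycle $B_\xi$ at conical points: one must know that any choice of geodesic ray representing $\xi$ gives the same value up to a uniform additive constant. This is exactly Lemma \ref{horofunc}, which relies on the conical-point characterization Lemma \ref{charconical}, and is precisely why the estimate is proved for $\Pi_r^c(g)$ rather than $\Pi_r(g)$.
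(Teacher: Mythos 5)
Your proof is correct and follows essentially the same route as the paper's: you estimate the cocycle $B_\xi(1,g)$ for conical $\xi$ shadowed by $B(g,r)$, apply the quasi-conformal inequality (\ref{cdensity}), and use $G$-equivariance to control the remaining mass. The paper phrases the change of basepoint by first applying $g^{-1}$ (integrating $\frac{d\mu_{g^{-1}}}{d\mu_1}$ over $g^{-1}\Pi^c_r(g)$), whereas you work directly with $\frac{d\mu_1}{d\mu_g}$ over $\Pi^c_r(g)$; by equivariance these are the same integral after the substitution $\xi \mapsto g^{-1}\xi$. One small slip: $\Pi^c_r(g)$ is the conical part of the \emph{shadow}, not the \emph{partial shadow}, and the role of $r_0$ is essentially cosmetic here (your estimate does not actually require $r$ large), but neither point affects the validity of the argument.
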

\begin{proof}
Let $\xi \in g^{-1} \Pi^c_r(g)$. Then there is a geodesic
$\gamma$ between $g^{-1}$ and $\xi$ such that $\gamma \cap B(1, r)
\neq \emptyset$. Hence, the following holds $$|d(z, g^{-1})-d(z, 1)
- d(g^{-1}, 1)| < 2r,$$ for any $z \in \gamma$ with $d(1, z) \ge d(1, g) +2r$.

By Lemma \ref{horofunc}, there is a uniform constant $C_1$ such that $$|B_\xi(g^{-1}, 1)
- \lim\limits_{t \to \infty} (d(\gamma(t),
g^{-1})-d(\gamma(t), 1))| \le C_1.$$

Since $\mu$ is a $\sigma$-dimensional quasi-conformal density, there
is a constant $C_2>0$ such that the following holds
$$
C_2^{-1} \exp(-\sigma B_\xi(g^{-1}, 1)) \le
\frac{d\mu_{g^{-1}}}{d\mu_1}(\xi) < C_2 \exp(-\sigma B_\xi(g^{-1},
1))
$$
for $\mu_1$-a.e. conical points $\xi \in \cG$.

By the equivariant property of $\{\mu_v\}_{v\in G}$, it follows that
$$\mu_{g^{-1}}(g^{-1}\Pi_r(g)) = \mu_1(\Pi_r(g)).$$

Observe that the shadow set $\Pi_r(g)$ is closed and thus
$\Pi^c_r(g)$ is Borel. We use Radon-Nikodym derivative to derive
the following
$$
\begin{array}{rl}
\mu_1(\Pi^c_r(g)) & \le \int_{g^{-1}\Pi^c_r(g)} C_2\exp(-\sigma B_\xi(g^{-1}, 1)) d \mu_1(\xi)  \\
& \prec C_2 \exp(C_1\sigma) \exp(-\sigma d(g^{-1}, 1)+2r\sigma)
\mu_1(g^{-1}\Pi^c_r(g)),
\end{array}
$$
which finishes the proof by the fact $\mu_1( g^{-1}\Pi_r(g)) \le 
\mu_1(\pG) =1$.
\end{proof}

Combining Shadow Lemma (I) \ref{weakombreI} and (II)
\ref{weakombreII}, we obtain the full strength of Shadow Lemma for a
$\sigma$-dimensional quasi-conformal density \textbf{without atoms at
parabolic points}.
\begin{lem}[Shadow Lemma]\label{ShadowLem}
Let $\{\mu_v\}_{v\in G}$ be a $\sigma$-dimensional quasi-conformal density without
atoms at parabolic points. Then there is $r_0 > 0$ such that the
following holds
$$
\begin{array}{rl}
\exp(-\g G d(1, g)) \prec \mu_1(\varPi_r(g))  \le \mu_1(\Pi_r(g)) \prec_r \exp(-\g G d(1, g))\\
\end{array}
$$
for any $g\in G$ and $r > r_0$.
\end{lem}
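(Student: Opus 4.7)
The plan is to combine the two preceding Shadow Lemmas, using the no-atoms-at-parabolic-points hypothesis to reconcile the fact that Lemma \ref{weakombreII} only controls the measure of the conical part $\Pi^c_r(g)$, while the statement asks for control of the full shadow $\Pi_r(g)$.

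For the lower bound, I would simply apply Shadow Lemma I \eqref{weakombreI} with $\sigma = \g G$. Since $\{\mu_v\}_{v\in G}$ is in particular a $\g G$-dimensional visual density, there exists $r_0>0$ such that for all $r > r_0$ and all $g \in G$,
$$
\exp(-\g G \, d(1,g)) \prec \mu_1(\varPi_r(g)) \le \mu_1(\Pi_r(g)).
$$
Enlarging $r_0$ if necessary to accommodate also the constant from Lemma \ref{weakombreII} causes no trouble.

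For the upper bound, the strategy is to reduce to the conical case and invoke Shadow Lemma II \eqref{weakombreII}. Since $G$ acts geometrically finitely on $\pG$, every non-conical point in $\pG$ is a bounded parabolic point, and the set of such points is countable (there are countably many conjugates of the finitely many subgroups in $\mathcal P$, each fixing a unique parabolic point). The hypothesis that $\mu_1$ has no atoms at parabolic points then forces $\mu_1(\Pi_r(g) \setminus \Pi^c_r(g)) = 0$, so
$$
\mu_1(\Pi_r(g)) = \mu_1(\Pi^c_r(g)) \prec \exp(-\g G \, d(1,g)) \cdot \exp(2 \g G \, r) \prec_r \exp(-\g G \, d(1,g)),
$$
by Lemma \ref{weakombreII}.

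The only real content beyond citing the previous two lemmas is the observation about parabolic points being countable, which is immediate from geometric finiteness. There is no serious obstacle; this lemma is a packaging step that upgrades the weaker one-sided bounds to a sharp two-sided estimate, at the cost of assuming quasi-conformality and the absence of atoms at parabolic points. The harder work, establishing that PS-measures indeed satisfy the no-atoms hypothesis, will have to be carried out separately in order to actually apply this Shadow Lemma to Patterson--Sullivan measures.
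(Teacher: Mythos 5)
Your proof is correct and follows exactly the route the paper intends: the paper itself gives no explicit argument for this lemma, merely stating that it follows by combining Shadow Lemmas I and II, and your write-up supplies precisely the missing glue (the lower bound from Lemma \ref{weakombreI}, the upper bound from Lemma \ref{weakombreII}, and the observation that geometric finiteness plus the no-atoms-at-parabolic-points hypothesis forces $\mu_1(\Pi_r(g)\setminus \Pi^c_r(g))=0$ since the countable set of bounded parabolic points is $\mu_1$-null). One small point worth flagging, though it is an issue with the lemma's statement rather than your proof: the bounds are written with $\g G$ while the density is $\sigma$-dimensional, so they are really only two-sided when $\sigma=\g G$ (which is the case in every application), and your phrasing implicitly makes that identification.
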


\subsection{First applications to growth functions}
In this subsection, we shall see how the growth rate of a relatively
hyperbolic group is related to the dimension of visual and quasi-conformal
density.
\begin{prop}\label{ballgrowth}
Let $\{\mu_v\}_{v\in G}$ be a $\sigma$-dimensional visual density on $\pG$. Then
the following holds $$\sharp N(1, n) \prec \exp(\sigma n),$$ for any
$n\ge 0$.
\end{prop}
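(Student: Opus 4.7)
The plan is to combine Shadow Lemma (I), namely Lemma \ref{weakombreI}, with a uniform multiplicity bound for the family of strong shadows centered at elements of a fixed annular shell. First, fix $r > 0$ as provided by Lemma \ref{weakombreI}, so that
$$\exp(-\sigma d(1, g)) \prec \mu_1(\varPi_r(g))$$
for every $g \in G$. For each integer $k \ge 0$, set
$$A_k := \{g \in G : k \le d(1, g) < k+1\},$$
so that, identifying $Go$ with $G$ via $g \mapsto g\cdot 1$, one has $N(1, n) = \bigsqcup_{k=0}^{n} A_k$.

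The heart of the proof is the following multiplicity bound: there is a constant $K$, depending only on $r$ and the action of $G$, such that for every $\xi \in \pG$ and every $k \ge 0$,
$$\sharp \{g \in A_k : \xi \in \varPi_r(g)\} \le K.$$
To establish this, suppose $\xi \in \varPi_r(g_1) \cap \varPi_r(g_2)$ for two distinct $g_1, g_2 \in A_k$. Pick any single geodesic $\gamma$ from $1$ to $\xi$ (which exists by the discussion at the start of Section 3). By definition of the \emph{strong} shadow, $\gamma$ meets both $B(g_1, r)$ and $B(g_2, r)$, in points $x_1$ and $x_2$ respectively. Since $\gamma$ is arc-length parameterized from $1$, the parameter value at $x_i$ equals $d(1, x_i)$, and $|d(1, x_i) - d(1, g_i)| \le r$. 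Because $d(1, g_1), d(1, g_2) \in [k, k+1)$, this yields $d(x_1, x_2) = |d(1, x_1) - d(1, x_2)| \le 1 + 2r$, and hence $d(g_1, g_2) \le 1 + 4r$. The multiplicity bound now follows from properness of the action: $K := \sharp(B(1, 1+4r) \cap G)$ is finite and uniform.

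Finally, integrate the multiplicity bound against $\mu_1$:
$$\sum_{g \in A_k} \mu_1(\varPi_r(g)) = \int_{\pG} \sharp\{g \in A_k : \xi \in \varPi_r(g)\}\, d\mu_1(\xi) \le K\, \mu_1(\pG).$$
Combining this with the lower bound $\mu_1(\varPi_r(g)) \succ \exp(-\sigma(k+1))$ from Lemma \ref{weakombreI} gives $\sharp A_k \prec \exp(\sigma k)$. Summing over $0 \le k \le n$ yields $\sharp N(1, n) \prec \exp(\sigma n)$, as required. The only genuine obstacle I anticipate is the multiplicity bound, which is a clean consequence of reading off arc-length along a single geodesic; this is exactly where the definition of \emph{strong} shadow (``every geodesic'', rather than ``some geodesic'') is used essentially. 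Everything else is routine integration against the visual density.
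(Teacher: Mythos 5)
Your proof is correct and follows essentially the same strategy as the paper's: a lower bound on the measure of strong shadows from Lemma \ref{weakombreI}, a uniform multiplicity bound on the family $\{\varPi_r(g)\}$ for $g$ in a fixed sphere or annulus (using the ``every geodesic'' property of strong shadows to localize all such $g$ in a uniformly bounded ball), and then a comparison against $\mu_1(\pG)$. The only cosmetic difference is that you derive the multiplicity constant by bounding pairwise distances $d(g_1,g_2) \le 1+4r$, whereas the paper pins all such $g$ into $B(x,2r)$ around a single reference point $x$ on the geodesic; both yield the same conclusion.
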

\begin{proof}
Denote by $S^k$ the set of elements $g$
in $G$ with $d(g, 1) = k$. Let $r, C_1$ be constants given by Lemma \ref{weakombreI} such that $$\mu_1(\varPi_r(g)) > C_1
\exp(-\sigma d(1, g)) > C_1\exp(-\sigma k)$$ for any $g \in S^k$.

Observe that each point of $\pG$ is contained, if at all, in at most
$C_2$ sets of form $\varPi_r(g)$, where $C_2>1$ depends only on $r$.
Indeed, given $\xi \in \pG$, let $g$ be such that $\xi \in
\Pi_r(g)$. Fix a geodesic $\gamma=[1, \xi]$ and a point $x \in
\gamma$ such that $d(1, x)= k$. By the definition of strong shadow, we have
$d(g, \gamma) < r$. Then $g \in B(x, 2r)$.  Hence, $C_2 = \sharp
B(x, 2r)$ gives the desired number.

Hence, for each $k > 0$, we have $\bigcup_{g \in S^k}
\mu_1(\varPi_r(g))  < C_2 \mu_1(\pG)$.  Then we obtain
$$\sharp{S^k} < C_3 \exp(\sigma k),$$ where $C_3 = C_1^{-1}C_2
\mu_1(\pG)$. Consequently, the following holds
$$\sharp N(1, n) = \sum\limits_{k=0}^{n} \sharp S^k < C
\exp(\sigma n),$$ for some constant $C \ge 1$.
\end{proof}
\begin{rem}
In the proof,  the strong shadow $\varPi_r(g)$ is necessary to
obtain the uniform constant $C_2$.
\end{rem}

We obtain a few corollaries as follows.
\begin{cor}\label{visualdimbd}
Let $\mu$ be a $\sigma$-dimensional visual density on $\pG$. Then
$\sigma \ge \g G$.
\end{cor}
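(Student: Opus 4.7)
The plan is to derive this directly from Proposition \ref{ballgrowth}. By the proposition, having a $\sigma$-dimensional visual density on $\pG$ yields a constant $C \ge 1$ such that
\[
\sharp N(1, n) \le C \exp(\sigma n)
\]
for every $n \ge 0$.

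Next I would simply unpack the definition of $\g G$ from the introduction, namely
\[
\g G = \limsup_{n \to \infty} n^{-1} \ln \sharp N(1, n).
\]
Taking $\ln$ of the bound, dividing by $n$, and passing to $\limsup$ gives $\g G \le \limsup_{n\to\infty}(n^{-1}\ln C + \sigma) = \sigma$, which is the claim.

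There is no real obstacle here; the corollary is essentially a restatement of the asymptotic ball bound in Proposition \ref{ballgrowth} in terms of the critical exponent. The only thing to be careful about is that $N(1,n)$ is defined via the action on $X$ (here $X = \Gx$, so distances are word distances and orbit points are group elements), so the growth bound really does control $\g G$ as defined in the introduction.
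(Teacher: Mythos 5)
Your proof is correct and is exactly the argument the paper intends (the corollary is stated without proof, as an immediate consequence of Proposition \ref{ballgrowth}): take logs of the bound $\sharp N(1,n)\prec\exp(\sigma n)$, divide by $n$, and pass to the $\limsup$ defining $\g G$.
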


\begin{cor}\label{ballgrowthI}
The following holds $$\sharp N(1, n) \prec \exp(\g G n),$$ for any
$n\ge 0$.
\end{cor}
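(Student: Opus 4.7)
The plan is to deduce this immediately from Proposition \ref{ballgrowth} by exhibiting a visual density of dimension exactly $\g G$. The natural candidate is a Patterson--Sullivan measure: in the Cayley graph setting of this section the growth rate $\g G$ is finite (bounded by $\ln\sharp S$) and $G$ is divergent with respect to the word metric, so the construction of Section 3.2 produces, for any sequence $s_i \searrow \g G$ with $\mu_1^{s_i}$ weakly convergent, a family of limit measures $\{\mu_v\}_{v\in G}$ on $\pG$. Assumption~A (a family of horofunction cocycles at conical points) was verified in Lemma \ref{rephorofunc}, and hence Lemma \ref{prequasiconf} applies to show that this PS-family is in fact a $\g G$-dimensional quasi-conformal density.

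Every quasi-conformal density is a visual density of the same dimension, by definition. Consequently the hypothesis of Proposition \ref{ballgrowth} is satisfied with $\sigma = \g G$, and the proposition yields
$$\sharp N(1,n) \prec \exp(\g G\cdot n)$$
for all $n \ge 0$, as desired.

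The point worth emphasising is that this bound is genuinely sharper than what the definition $\g G = \limsup n^{-1}\ln \sharp N(1,n)$ gives directly: the $\limsup$ only provides $\sharp N(1,n) \prec_\epsilon \exp((\g G + \epsilon) n)$ for every $\epsilon>0$, and removing the $\epsilon$ is exactly the service rendered by the Shadow Lemma (I) through Proposition \ref{ballgrowth}. There is no genuine obstacle: the work has been done in setting up the PS-construction and verifying Assumption~A, and the corollary is a one-line consequence of combining Lemma \ref{prequasiconf} with Proposition \ref{ballgrowth}.
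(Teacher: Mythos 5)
Your argument is exactly the paper's proof: Lemma \ref{prequasiconf} shows the PS-family is a $\g G$-dimensional quasi-conformal (hence visual) density, and then Proposition \ref{ballgrowth} gives the stated bound with $\sigma=\g G$. The extra remarks you add about why this is sharper than what the $\limsup$ definition alone provides are correct but not part of the paper's one-line proof.
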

\begin{proof}
By Proposition \ref{prequasiconf}, PS-measures $\{\mu_v\}_{v\in G}$ is a $\g G$-dimensional
visual density.
\end{proof}

Let's list $G = \{g_1, \ldots, g_i, \ldots \}$. By Lemma
\ref{charconical}, we have $\cG = A_r$ for any $r \gg 0$, where
\begin{equation}\label{conicalset}
A_r = \bigcap_{n=1}^{\infty} \bigcup_{i=n}^{\infty}
\Pi_r(g_i). 
\end{equation} In
other words, a conical point is shadowed infinitely many times
by elements in $G$.

The observation (\ref{conicalset}) can be used to impose an upper
bound on the dimension of an arbitrary quasi-conformal density on
$\pG$ by the growth rate of $G$.

\begin{lem}
Let $\{\mu_v\}_{v\in G}$ be a $\sigma$-dimensional quasi-conformal density on $\pG$.
If for some (thus any $v\in G$) $\mu_v$ gives positive measure to the set of conical points, then
$\sigma \le \g G$.
\end{lem}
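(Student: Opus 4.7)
The plan is to exploit the fact that every conical point is covered infinitely often by shadows of orbit points (observation \ref{conicalset}), combined with the Shadow Lemma II to compare the dimension $\sigma$ against the critical exponent.

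First I would rewrite observation (\ref{conicalset}) more carefully: for $r$ large enough (as supplied by Lemma \ref{charconical}), every conical point lies in $\Pi_r(g_i)$ for infinitely many $i$. Since a conical point stays conical, we in fact have
\[
\cG \subset \bigcap_{n=1}^{\infty} \bigcup_{i=n}^{\infty} \Pi^c_r(g_i) = \limsup_i \Pi^c_r(g_i).
\]
By hypothesis $\mu_1(\cG) > 0$, so $\mu_1(\limsup_i \Pi^c_r(g_i)) > 0$. The contrapositive of the Borel--Cantelli lemma then forces
\[
\sum_{i=1}^{\infty} \mu_1(\Pi^c_r(g_i)) = \infty.
\]

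Next I would invoke Shadow Lemma II (Lemma \ref{weakombreII}), applicable since $\{\mu_v\}_{v\in G}$ is a $\sigma$-dimensional quasi-conformal density. For $r$ large enough, it yields a constant $C = C(r)$ such that
\[
\mu_1(\Pi^c_r(g)) \le C \exp(-\sigma d(1, g))
\]
for every $g \in G$. Combining with the divergence above, the Poincar\'e series of $G$ at exponent $\sigma$ satisfies
\[
\mathcal{P}_{\sigma}(1, 1) = \sum_{g \in G} \exp(-\sigma d(1, g)) \ge C^{-1} \sum_{i=1}^{\infty} \mu_1(\Pi^c_r(g_i)) = \infty.
\]
By definition of the critical exponent as the abscissa of convergence, divergence of $\mathcal{P}_{\sigma}(1, 1)$ forces $\sigma \le \g G$, which is the desired conclusion.

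There is no real obstacle here; the only point of care is making sure that (\ref{conicalset}) really places conical points inside the \emph{conical} shadow $\Pi^c_r(g_i)$ rather than just $\Pi_r(g_i)$, so that Shadow Lemma II applies. This is automatic because conicality is an intrinsic property of the boundary point, independent of which shadow one considers it in. The argument for general $v$ follows from the case $v = 1$ by the equivariance and mutual absolute continuity of the family $\{\mu_v\}_{v\in G}$, so it suffices to treat a single basepoint.
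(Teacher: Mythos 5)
Your argument is correct and is essentially the paper's own proof: both rest on the observation that $\cG\subset\limsup_i\Pi^c_r(g_i)$, the fact that $\mu_1$ assigns it positive mass, and the upper bound from Shadow Lemma II to force divergence of the Poincar\'e series at exponent $\sigma$. The only cosmetic difference is that you invoke the contrapositive of Borel--Cantelli explicitly, whereas the paper unrolls the same reasoning by bounding below every tail $\sum_{d(g,1)\ge n}\exp(-\sigma d(1,g))$ by $\exp(-2\sigma r)\mu_1(A_r)>0$ via countable subadditivity of $\mu_1$ over the union of shadows.
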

\begin{proof}
List $G = \{g_1, \ldots, g_i, \ldots\}$ such that $d(1, g_i)\le d(1, g_{i+1})$. We fix $r> r_0$, where  $r_0$ is given by Lemma
\ref{weakombreII} such that  $\mu_1(\cG)\ge \mu_1(A_r) > 0$.
Recall that
$$\Ps{G} = \sum\limits_{g \in G} \exp(-sd(1, g)) = \sum\limits_{k=0}^{\infty} \sharp S^k \exp(-sk).$$

We claim that $\Ps{G}$ is divergent at $s=\sigma$. Indeed, by Lemma
\ref{weakombreII}, we see that  
$$
\begin{array}{rl}
\sum\limits_{k=n}^{\infty} \sharp S^k \exp(-\sigma k) & \succ_r \sum\limits_{d(g, 1)\ge n} \mu_1(\Pi^c_r(g)) \\
&\succ_r \mu_1(\bigcup\limits_{d(g, 1)\ge n} \Pi^c_r(g))\\
& \succ_r \mu_1(A_r) > 0
\end{array}
$$
for all $n > 0$. Since $\mu_1(A_r)$ is independent of $n$, we have $\Ps{G}$ is divergent at
$s=\sigma$. This completes the proof.
\end{proof}
Corollary \ref{visualdimbd} implies:
\begin{cor}
If $\{\mu_v\}_{v\in G}$ is a $\sigma$-dimensional quasi-conformal density for $G$
giving positive measure on conical points, then $\sigma=\g G$.
\end{cor}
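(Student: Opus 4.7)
The plan is to observe that this corollary is an immediate consequence of the two results stated just before it, which together pin down $\sigma$ from both sides. First I would note that by definition, a $\sigma$-dimensional quasi-conformal density is in particular a $\sigma$-dimensional visual density (the quasi-conformal relation \eqref{cdensity} combined with Assumption A and control over $B_\xi(go,ho)$ by $d(go,ho)$ yields the visual estimate \eqref{vdensity}). Consequently, the corollary following Proposition \ref{ballgrowth} applies and gives the lower bound $\sigma \ge \g G$.

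Next I would invoke the preceding lemma, which assumes positive $\mu_v$-measure on the conical set $\cG$ and concludes $\sigma \le \g G$. Since our hypothesis matches exactly, this gives the matching upper bound. Combining the two inequalities yields $\sigma = \g G$.

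There is essentially no obstacle here, since both directions are already established; the corollary is stated precisely to package the two one-sided bounds into a single equality. The only thing worth double-checking in the write-up is that quasi-conformal densities are indeed automatically visual, so that the lower bound $\sigma \ge \g G$ really does apply — but this follows directly from the definitions given in Section 3, using that $|B_\xi(go,ho)| \le d(go,ho) + O(1)$ by Assumption A applied in a neighborhood of $\xi$ together with the definition $B_z(x,y) = d(x,z)-d(y,z)$.
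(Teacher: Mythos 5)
Your proposal is correct and matches the paper's (implicit) argument exactly: the corollary is the conjunction of the unnamed corollary after Proposition \ref{ballgrowth} (visual density $\Rightarrow \sigma \ge \g G$) and the immediately preceding lemma (quasi-conformal density with positive measure on conical points $\Rightarrow \sigma \le \g G$). One small caution about your parenthetical remarks: you needn't, and in fact cannot cleanly, derive the visual estimate \eqref{vdensity} from the quasi-conformal relation \eqref{cdensity} plus Assumption A, because \eqref{cdensity} is only asserted $\mu_h$-a.e. on the conical set $\cG$, whereas \eqref{vdensity} must hold a.e.\ on all of $\pG$. Fortunately this derivation is unnecessary: the paper's definition already declares a $\sigma$-dimensional quasi-conformal density to \emph{be} a $\sigma$-dimensional visual density satisfying the extra condition \eqref{cdensity}, so the containment is by fiat, not by estimate. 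With that understood, your first sentence ("by definition, a quasi-conformal density is in particular a visual density") is exactly the right justification, and the rest goes through.
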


\subsection{PS-measures are quasi-conformal density without atoms}
Let $\{\mu_v\}_{v\in G}$ be PS-measures constructed on $\Gx \cup \pG$. We first show that $\mu_v$ has no atoms at parabolic points,
following an argument of Dal'bo-Otal-Peign\'e in \cite[Propositions
1 \& 2]{DOP}. A similar result as \cite[Proposition 1]{DOP} was also independently obtained by Matsuzaki in \cite[Lemma 30]{Matsu} for Kleinian groups. \footnote{The author is grateful to Professor K. Matsuzaki for communicating this result to him. }

Recall that the limit set $\Lambda(H)$ of a subgroup $H$ is the set of accumulation points of $H$ in the compactification $\Gx \cup \pG$. See Section \ref{Section2} for more details.
\begin{lem}\label{convpara}
Let $H$ be a subgroup in $G$ such that $\Lambda(H)$ is properly
contained in $\pG$. Then for any $v\in G$, $\PSv{H}$ is convergent at $s=\g G$. In
particular, if $H$ is divergent, then $\g H < \g G$.
\end{lem}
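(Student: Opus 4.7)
The approach is to estimate $\PS{H}$ via the Sullivan Shadow Lemma applied to the PS-measures $\{\mu_v\}_{v\in G}$ of $G$, and then control the resulting sum of shadow measures using the convergence group dynamics of $G$ on $\pG$. The PS-measures form a $\g G$-dimensional quasi-conformal density by Lemma \ref{prequasiconf}, and Shadow Lemma~I (Lemma \ref{weakombreI}) provides $r>0$ with
$$\mu_1\bigl(\varPi_r(ho)\bigr) \succ \exp\bigl(-\g G\, d(o,ho)\bigr) \quad \text{for every } h \in H.$$
Hence $\PS{H} \prec \sum_{h\in H} \mu_1(\varPi_r(ho))$, so I would reduce the lemma to proving finiteness of this sum.

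The key geometric input is that shadows of $H$-orbit points concentrate near $\Lambda(H)$: for every open neighborhood $U \supset \Lambda(H)$, there is $N$ such that $\varPi_r(ho) \subset U$ whenever $d(o,ho) > N$. This I would deduce from the Visibility Lemma \ref{karlssonlem}: if $\xi_n \in \varPi_r(h_n o)$ with $d(o,h_n o) \to \infty$, then $\rho_o(\xi_n, h_n o)$ must tend to $0$, forcing every accumulation point of $\{\xi_n\}$ into $\overline{Ho}\cap\pG = \Lambda(H)$, contradicting $\xi_n \notin U$. Since $\Lambda(H) \subsetneq \pG$ and the minimal non-elementary convergence action of $G$ on $\pG$ has dense loxodromic fixed points, I would pick a loxodromic $g_0 \in G$ with both fixed points in $\pG\setminus\Lambda(H)$; by North--South dynamics, $g := g_0^n$ satisfies $g\Lambda(H)\cap\Lambda(H) = \emptyset$ for $n$ large. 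Iterating, I obtain finitely many $g_1,\dots,g_k \in G$ with $\bigcap_{i=1}^k g_i\Lambda(H) = \emptyset$, equivalently $\pG = \bigcup_{i=1}^k \bigl(\pG \setminus g_i\Lambda(H)\bigr)$.

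Combining these ingredients, I would apply the localization to each conjugate $H_i := g_i H g_i^{-1}$ (whose limit set is $g_i\Lambda(H)$), use $\PS{H_i} \asymp \PS{H}$ (comparable via the triangle inequality with constants depending on $d(o, g_i o)$), and conclude
$$k\,\PS{H} \;\asymp\; \sum_{i=1}^k \sum_{h \in H_i} \mu_1\bigl(\varPi_r(ho)\bigr) \;\prec\; 1,$$
since the covering $\pG = \bigcup_i(\pG\setminus g_i\Lambda(H))$ controls the cross-overlaps between different conjugates. The ``in particular'' conclusion follows immediately: if $H$ is divergent, then $\PS{H}$ diverges at $s = \g H$, which combined with $\PS{H} < \infty$ at $s = \g G$ forces $\g H < \g G$. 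The main obstacle I foresee is controlling the shadow multiplicity \emph{within} a single conjugate $H_i$, where the shadows $\varPi_r(ho)$ with $h \in H_i$ can pile up on conical limit points of $g_i\Lambda(H)$; surmounting this likely requires establishing an integrability estimate on $\mu_1|_{\Lambda(H)}$ (for instance, by a refined covering/decomposition argument of $\Lambda(H)$ using the complements $\pG \setminus g_j\Lambda(H)$ for $j \ne i$), following the techniques of Dal'bo--Otal--Peign\'e \cite{DOP}.
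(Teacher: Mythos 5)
Your proposal does not match the paper's argument and, as you yourself observe at the end, has a real gap that I do not think is easily fillable.

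The paper proves this lemma by a different and much more economical route: since $G$ is a convergence group, $H$ acts properly on the open set $\pG \setminus \Lambda(H)$, which is nonempty by hypothesis. One picks a Borel fundamental domain $K$ for this action. Because the support of $\mu_1$ is all of $\pG$ and the measures $\mu_v$ are mutually absolutely continuous, $\mu_1(K)>0$. The sets $\{h^{-1}K\}_{h\in H}$ are pairwise disjoint, so $\sum_{h\in H}\mu_1(h^{-1}K)\le \mu_1(\pG)$. Equivariance gives $\mu_1(h^{-1}K)=\mu_h(K)$, and the lower Radon--Nikodym bound in the visual density property gives $\mu_h(K)\succ \exp(-\g G\, d(o,ho))\mu_1(K)$. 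Summing over $H$ yields $\PS{H}$ finite at $s=\g G$. No shadow lemma is needed; the only ingredient beyond equivariance is the lower bound in \eqref{vdensity}, which already holds for visual densities.

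The gap in your approach is the step $\sum_{h\in H}\mu_1(\varPi_r(ho))<\infty$. The shadows of the $H$-orbit concentrate on $\Lambda(H)$, and at any conical limit point of $\Lambda(H)$ infinitely many of these shadows overlap. Worse, by the upper bound in Shadow Lemma II one has $\mu_1(\Pi^c_r(ho))\prec_r\exp(-\g G\, d(o,ho))$, so the quantity you want to bound, $\sum_h\mu_1(\Pi^c_r(ho))$, is comparable to $\PS{H}$ itself. Thus the reduction you propose is circular: establishing finiteness of the sum of shadow measures is essentially the same problem as establishing convergence of the Poincar\'e series. The covering $\pG=\bigcup_i\bigl(\pG\setminus g_i\Lambda(H)\bigr)$ by complements of disjoint translated limit sets does not break this circularity, because the unbounded overlap of shadows lives \emph{inside} a single $g_i\Lambda(H)$ and the complementary pieces say nothing about that. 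The missing idea is precisely the one the paper uses: replace the shadows (which concentrate on $\Lambda(H)$) by the translates $h^{-1}K$ of a fundamental domain \emph{in the complement} of $\Lambda(H)$; these are pairwise disjoint by construction, so the summability comes for free. Your ``in particular'' deduction at the end is correct, but only once the main claim is in hand.
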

\begin{proof}
Since $H$ acts properly on $\pG \setminus \Lambda(H)$, choose  a
Borel fundamental domain $K$ for this action such that the
following holds
$$
\mu_1(\pG) = \sum\limits_{h \in H} \mu_1(hK) + \mu_1(\Lambda(H)).
$$
Observe that $\mu_1(K)>0$. Indeed, if not, we see that $\mu_1$ is
supported on $\Lambda(H)$. This gives a contradiction, as $\mu_1$ is
supported on $\pG$. Thus, $\mu_1(K) >0$.

Note that $\{\mu_v\}_{v\in G}$ is a $\g G$-dimensional visual density. It follows
that we have $$\mu_1(h^{-1}K)  = \mu_{h}(K) \ge \exp(-\g G d(1,
h)) \mu_1(K)$$ for any $h \in H$. Hence,
$$
\mu_1(\pG) \ge \sum\limits_{h \in H} \mu_1(hK)  \ge \sum\limits_{h
\in H} \exp(-\g G d(1, h)) \mu_1(K),
$$
implying that $\sum\limits_{h \in H}\exp(-\g G d(1, h))$ is finite.
This concludes the proof.
\end{proof}

\begin{lem} \label{parabnoatom}
The PS-measures $\{\mu_v\}_{v\in G}$ have no atoms at bounded parabolic points.
\end{lem}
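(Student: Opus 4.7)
The plan is to argue by contradiction, adapting the strategy of Dal'bo--Otal--Peign\'e. Suppose $\mu_1(\{\xi\}) = a > 0$ at some bounded parabolic point $\xi$, and let $P = G_\xi$ be its (infinite) stabilizer. The limit set of $P$ in $\pG$ is $\{\xi\}$, properly contained in $\pG$, so Lemma \ref{convpara} gives that $\PS{P}$ is convergent at $s = \g G$. By weak convergence $\mu_1^{s_i} \to \mu_1$ along the Patterson subsequence $s_i \downarrow \g G$, any open neighborhood $U \ni \xi$ in $\Gx \cup \pG$ satisfies
\[
\liminf_{i \to \infty} \mu_1^{s_i}(U) \ \ge \ \mu_1(U) \ \ge \ a.
\]
The goal is to contradict this by showing $\mu_1^s(U) \to 0$ as $s \downarrow \g G$ for $U$ sufficiently small. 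Since $G$ is divergent in the word metric, $\PS{G}(s) \to \infty$, so it is enough to bound the numerator $\sum_{go \in U} \exp(-sd(1, go))$ uniformly in $s$ close to $\g G$.

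The geometric heart, and the step I expect to be the main obstacle, is the following localization: \emph{there is $M > 0$ such that for every $\delta > 0$ one can choose an open neighborhood $U = U_\delta$ of $\xi$ (in the shortcut metric) with the property that every $g \in G$ with $go \in U$ and $d(1,g) \ge M$ lies in $N_M(Po)$, where $Po$ is the peripheral coset accumulating to $\xi$.} To prove it, pick a geodesic $\gamma = [1, go]$ in $\Gx$ and an infinite geodesic $\gamma_\infty = [1, \xi]$. Shrinking $U$ forces $\rho_1(go, \xi)$ to be tiny, hence the Karlsson Visibility Lemma \ref{karlssonlem} makes $\gamma$ and $\gamma_\infty$ fellow-travel for most of their length. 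Because $\xi$ is the unique limit of $Po$, the tail of $\gamma_\infty$ stays deep in $N_\epsilon(Po)$; the combination of Lemmas \ref{floydtrans}, \ref{uniformtrans}, and \ref{entrytrans} then pins down the endpoint $go$ in a uniformly bounded neighborhood of $Po$.

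Once the localization is in hand, the counting step is immediate. Since the $G$-action is proper, $N_M(Po) \cap Go = F \cdot Po$ for some finite $F \subset G$. Writing $g = fp$ with $f \in F$, $p \in P$, and using $|d(1, fpo) - d(1, po)| \le d(1, fo)$, we obtain
\[
\sum_{\substack{go \in U \\ d(1,g) \ge M}} \exp(-sd(1, go)) \ \le \ |F| \cdot \bigl(\max_{f \in F} \exp(s\, d(1, fo))\bigr) \cdot \PS{P}(s),
\]
which remains bounded as $s \downarrow \g G$ by Lemma \ref{convpara}. Dividing by $\PS{G}(s) \to \infty$ yields $\mu_1^s(U) \to 0$, contradicting the lower bound $a > 0$ obtained from weak convergence.
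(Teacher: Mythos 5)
The localization claim at the heart of your argument is false, and this is a genuine gap rather than a technical nuisance. You assert that for each small enough neighborhood $U$ of $\xi$ there is a uniform $M$ such that every orbit point $go \in U$ with $d(1,g) \ge M$ lies in $N_M(Po)$. But converging to a bounded parabolic point in the shortcut metric does not force the orbit point itself to stay near the parabolic coset. For a concrete obstruction, take $G = \mathbb{Z}^2 * \mathbb{Z}$ with $P = \mathbb{Z}^2 = \langle p, q\rangle$ and free generator $a$, and set $g_n = p^n a^n$. The word geodesic $[1, g_n]$ passes through $p^n$, which is at distance $n$ from the identity, so $\rho_1(g_n, \xi) \lesssim \lambda^{-n} \to 0$ and $g_n$ lies in any neighborhood $U$ of $\xi$ once $n$ is large; yet $d(g_n, P) = n \to \infty$. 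Thus no fixed $M$ works, and your counting step, which sums only over $N_M(Po) \cap Go$, misses arbitrarily many orbit points in $U$ and yields no bound on the numerator of $\mu_1^s(U)$.

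The correct picture is that orbit points near $\xi$ need not be near $Po$; rather, the geodesic from the basepoint to such a point passes close to some $po$ with $d(o, po)$ large and may then wander away from $Po$. The paper's proof encodes this via a fundamental-domain decomposition: choose a compact fundamental domain $K$ for the $P$-action on $(\pG \cup G)\setminus\{\xi\}$, so every orbit point close enough to $\xi$ lies in some translate $pK$ with $d(o, po)$ large. After placing the basepoint $o$ in the finite set $\proj_P(K)$ (finite by Lemma \ref{projection}), the contraction of peripheral cosets (Lemma \ref{contracting}) yields $d(z, o) + M \ge d(z, po) + d(o, po)$ for all $z \in pK$, which factors the exponential weight through $po$ regardless of how far $z$ is from $Po$. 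Summing over $z \in pK$ and changing variables then bounds $\mu_1^s(V_n)$ by a tail of $\PS{P}$, which is controlled by Lemma \ref{convpara} exactly as you intended. Your overall framing — weak convergence, the appeal to Lemma \ref{convpara}, the bounded-numerator-over-divergent-denominator endgame — is all in the right spirit; what must be replaced is specifically the localization step, by the decomposition into $P$-translates of $K$ together with the projection inequality.
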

\begin{proof}
Let $q \in \pG$ be a bounded parabolic point, and $P$ be the stabilizer of $q$ in
$G$. Choose a compact fundamental domain $K$ for the action
of $P$ on $(\pG\cup G) \setminus q$. Enlarge $K$, if necessary, in $G \cup \pG$ such that $q \notin K$ and the boundary of $K$ is
$\mu_v$-null for some (hence any) $v\in G$.

Since the closures of $P$ and $K$ at infinity are disjoint in $\pG$, we have by \cite[Proposition 3.3]{GePo4} that  $\proj_P(K\cap G)$ is
a finite set.

We choose the basepoint $v \in \proj_P(K\cap G)$.  By Lemma
\ref{contracting}, there is a constant $M>0$ such that the following
holds
\begin{equation}
d(z, v) + M \ge d(z, pv) + d(v, pv) \ge d(v, z),
\end{equation}
for any $z \in pK$ and $p\in P$.

Define $V_n = \bigcup\limits_{d(v, pv) > n}^{p\in P} p K$. Then $V_n \cup q$
is a decreasing sequence of open neighborhoods of $q$. Note that the
boundary of $V_n$ is $\mu_1$-null. It follows that $\mu_1^{s}(V_n)
\to \mu_1(V_n)$ for each $V_n$, as $s \to \g G$. 

Since $G$ is of divergent type for word metrics,  we have
$$
\begin{array}{rl}
\mu_v^s(V_n) & = \mu_v^s(\bigcup\limits_{d(v, pv) > n}^{p\in P} p K) \le \sum\limits_{d(v, pv) > n} \mu_v^s(p K)  \\
& \le \frac{1}{\PSv{G}}\sum\limits_{d(v, pv) > n}^{p\in P}\left(
\sum\limits_{z \in pK}\exp(-sd(v, z)) \cdot \dirac{z}\right)\\
& \le \frac{\exp(sM)}{\PSv{G}} \sum\limits_{d(v, pv) > n}^{p\in P}
\left(\sum\limits_{z \in pK}\exp(-sd(pv, z)-sd(v, pv)) \cdot \dirac{z}\right)
\end{array}
$$
yielding
$$
\begin{array}{rl}
\mu_v^s(V_n) & \le \frac{\exp(sM)}{\PSv{G}}
\sum\limits_{d(v, pv) > n}^{p\in P} \exp(-sd(v,pv)) \left(\sum\limits_{z \in K}\exp(-sd(v, z)) \cdot\dirac {z}\right)\\
& \le \exp(sM)\cdot \mu_v^s(K)\cdot \sum\limits_{d(v, pv) > n}^{p\in P}  \exp(-sd(v, pv)).
\end{array}
$$

Note that $\PSv{P}$ is convergent at $s=\g G$. Let $s \to \g G$ and
then $n\to \infty$. We obtain that $\mu_v(q) =0$. 
\end{proof}

We next consider conical points. In what follows until the end of
this subsection, we fix $r_0 = \varphi(\kappa/2)$, where
$\kappa=\kappa(\epsilon, R)>0$ is given by Lemma \ref{floydtrans} and $\varphi$ given by
Lemma \ref{karlssonlem}. Let $\lambda >1$ be given by Theorem \ref{Floyd}.

\begin{lem}\label{diskinombre}
Let $\xi \in \pG$ be a conical limit point and $\gamma$ be a
geodesic between $1$ and $\xi$. Let $v$ be an $(\epsilon, R)$-transition point on
$\gamma$. Denote $C_1=\kappa/2$ and $C_2= 2(\frac{1}{1-\lambda^{-1}}+r_0\lambda^{2r_0})$. Then
$$B_{\rho_1}(\xi, C_1 \lambda^{-d(v,1)}) \subset \Pi_{r_0}(v) \subset B_{\rho_1}(\xi, C_2\lambda^{-d(v,1)}).$$
\end{lem}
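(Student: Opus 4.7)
The plan is to deduce both inclusions from three ingredients: the scaling inequality $\lambda^{-d(v,1)} \le \rho_v/\rho_1 \le \lambda^{d(v,1)}$ between Floyd metrics at different basepoints, the transition property via Lemma \ref{floydtrans}, and Karlsson's Visibility Lemma \ref{karlssonlem}.

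For the first inclusion $B_{\rho_1}(\xi, C_1 \lambda^{-d(v,1)}) \subset \Pi_{r_0}(v)$, I start from $\eta$ with $\rho_1(\xi, \eta) < (\kappa/2)\lambda^{-d(v,1)}$ and rescale to the basepoint $v$, obtaining $\rho_v(\xi, \eta) \le \lambda^{d(v,1)} \rho_1(\xi,\eta) < \kappa/2$. Since $v$ is an $(\epsilon, R)$-transition point of $\gamma = [1,\xi]$, Lemma \ref{floydtrans} gives $\rho_v(1, \xi) > \kappa$, so the triangle inequality in $\rho_v$ forces $\rho_v(1, \eta) > \kappa/2$. Applying Karlsson's Visibility Lemma with basepoint $v$ to any geodesic $[1, \eta]$ then shows that it meets $B(v, \varphi(\kappa/2)) = B(v, r_0)$, yielding $\eta \in \Pi_{r_0}(v)$.

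For the second inclusion $\Pi_{r_0}(v) \subset B_{\rho_1}(\xi, C_2 \lambda^{-d(v,1)})$, I take $\eta \in \Pi_{r_0}(v)$ and fix a geodesic $\gamma' = [1,\eta]$ together with a point $w \in \gamma'$ with $d(w, v) \le r_0$. The idea is to bound $\rho_1(\xi, \eta)$ from above by the Floyd $\len_1$-length of a concatenated path consisting of three pieces: the tail of $\gamma$ from $v$ to $\xi$, a short combinatorial bridge of length at most $r_0$ from $v$ to $w$, and the tail of $\gamma'$ from $w$ to $\eta$. Since both $\gamma$ and $\gamma'$ emanate from $1$, every edge of either tail sits at word-distance at least roughly $d(1, v)$ from the basepoint, so summing the Floyd weights $\lambda^{-n}$ along each tail yields a geometric series of order $\lambda^{-d(1, v)}$; the bridge contributes at most $r_0 \lambda^{-d(1,v)}$ up to a bounded prefactor. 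Collecting the three estimates produces the stated bound with $C_2 = 2(1+r_0)$.

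The main technical difficulty lies in the second inclusion: the three Floyd-length contributions must be combined carefully so that the geometric-series and $r_0$-shift prefactors fit into the stated form of $C_2$ without introducing extraneous $\lambda$-dependence. The first inclusion, by contrast, is a clean three-line argument chaining the scaling inequality, Lemma \ref{floydtrans}, and Lemma \ref{karlssonlem}.
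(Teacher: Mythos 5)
Your argument is correct and matches the paper's proof essentially step for step: the first inclusion rescales to basepoint $v$, invokes Lemma \ref{floydtrans} and the triangle inequality to get $\rho_v(1,\eta) \ge \kappa/2$, and then applies Karlsson's Visibility Lemma; the second inclusion bounds $\rho_1(\xi,\eta)$ by the Floyd $\len_1$-length of the concatenation (tail of $\gamma$ past $v$) $\cup$ (short bridge) $\cup$ (tail of $[1,\eta]$ past $w$), just as the paper does. The ``technical difficulty'' you flag at the end is real but immaterial: the geometric-series prefactor $(1-\lambda^{-1})^{-1}$ and the $r_0$-shift prefactor $\lambda^{O(r_0)}$ do enter, so strictly speaking $C_2$ should depend on $\lambda$ and $r_0$; however, both the paper and Lemma \ref{ombredisks} only ever use the existence of \emph{some} constants, so the precise normalization $C_2 = 2(1+r_0)$ is cosmetic and you need not force it.
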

\begin{proof}
Let $\eta \in B_{\rho_1}(\xi, \frac{\kappa}{2\lambda^{d(v,1)}})$. By
the property (\ref{equivmetric}), it follows that $\rho_v(\eta, \xi)
\le \lambda^{d(v,1)} \rho_1(\eta, \xi) \le \kappa/2$. Hence, $\rho_v(1, \eta) \ge \kappa/2$ and then any
geodesic between $1, \eta$ has to pass through the ball $B(v, r_0)$. This proves the first inclusion.

Let $\eta \in \Pi_{r_0}(v)$. Then $d(v, [1, \eta]) \le r_0$ for some geodesic $[1, \eta]$. Consequently, there exists $w\in [1, \eta]$ such that $d(1, w)=d(1, v)$ and $d(v, w) \le 2r_0$. Observe that any segment $q$ of $[1, \xi]$ is a Floyd geodesic with respect to $\rho_1$. Thus, we obtain by calculation
$$
\rho_1(\xi, \eta) \le \rho_1(v, \xi)+\rho_1(w, \eta) + \rho_1(v, w) \le (\frac{2}{1-\lambda^{-1}}+2r_0\lambda^{2r_0}){\lambda^{-d(v,1)}}.
$$
This completes the proof.
\end{proof}

We are able to estimate PS-measures of a sequence of shrinking
$\rho_1$-metric disks centered at a conical point.

\begin{lem}\label{ombredisks}
Let $\{\mu_v\}_{v\in G}$ be a $\sigma$-dimensional quasi-conformal density without
atoms at parabolic points on $\pG$.  Then there exist constants $C_1, C_2
>0$ with the following property.

For any conical limit point $\xi \in \pG$, there exists a sequence of decreasing
numbers $\{r_i>0: r_i \to 0\}$ such that the following holds
$$
C_1 r_i^{-\sigma/\log
\lambda} \le \mu_1(B_{\rho_1}(\xi, r_i)) \le C_2
r_i^{-\sigma/\log
\lambda},$$ for all $i >0$.
\end{lem}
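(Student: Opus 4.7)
The plan is a sandwich argument: localize the ball $B_{\rho_1}(\xi, r_i)$ between two partial shadows via Lemma~\ref{diskinombre}, then estimate their PS-measures by Shadow Lemma~\ref{ShadowLem}. First I would fix a geodesic $\gamma = [1, \xi]$ and, arguing as in the proof of Lemma~\ref{rephorofunc}, extract an infinite sequence of $(\epsilon, R)$-transition points $v_1, v_2, \dots$ on $\gamma$ with $d(v_i, 1) \to \infty$. For each transition point $v$ on $\gamma$, Lemma~\ref{diskinombre} supplies the inclusions
$$
B_{\rho_1}(\xi, C_1 \lambda^{-d(v,1)}) \subset \Pi_{r_0}(v) \subset B_{\rho_1}(\xi, C_2 \lambda^{-d(v,1)}),
$$
while the Shadow Lemma~\ref{ShadowLem} (which uses the absence of atoms at parabolic points proved in Lemma~\ref{parabnoatom}) gives $\mu_1(\Pi_{r_0}(v)) \asymp \exp(-\g G\, d(v, 1))$ with uniform constants. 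Under the normalization $\sigma = \g G/\log\lambda$, the shadow measure is precisely of the desired order $(\lambda^{-d(v,1)})^\sigma$.

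The subtle point is that a single transition point yields only an upper bound at the smaller radius $C_1\lambda^{-d(v,1)}$ and a lower bound at the larger radius $C_2\lambda^{-d(v,1)}$, at different scales. To align them at a common radius I would pair up transition points: for each $i$, choose a companion $w_i$ on $\gamma$ farther along than $v_i$ with
$$
\log_\lambda(C_2/C_1) \le d(w_i, 1) - d(v_i, 1) \le K
$$
for a uniform constant $K$. Setting $r_i := C_2 \lambda^{-d(w_i, 1)}$, the first inequality forces $r_i \le C_1 \lambda^{-d(v_i,1)}$, so that
$$
\Pi_{r_0}(w_i) \subset B_{\rho_1}(\xi, r_i) \subset \Pi_{r_0}(v_i).
$$
Applying Shadow Lemma~\ref{ShadowLem} to both shadows, and then using the gap bound $d(w_i,1)-d(v_i,1)\le K$ to compare $\exp(-\g G d(v_i,1))$ and $\exp(-\g G d(w_i,1))$ up to a factor $\exp(\g G K)$, delivers $\mu_1(B_{\rho_1}(\xi, r_i)) \asymp r_i^\sigma$ with uniform constants depending only on $C_1, C_2, K$ and $\g G$.

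The hard part will be securing the bounded-gap condition. In a relatively hyperbolic group, an arbitrary geodesic can traverse a peripheral coset for arbitrarily long, so consecutive $(\epsilon, R)$-transition points on $\gamma$ may be arbitrarily far apart. This is where the conical hypothesis enters crucially: Lemma~\ref{charconical} supplies a sequence of group elements $g_n \in G$ lying within bounded distance of every geodesic ending at $\xi$, with $d(g_n, 1) \to \infty$. Projecting these $g_n$'s onto $\gamma$, and, if necessary, replacing each projection by the nearby entry or exit point of the peripheral neighbourhood in which it is $(\epsilon, R)$-deep (which is itself a transition point by Lemma~\ref{entrytrans}), yields a coarsely dense subset of transition points along $\gamma$. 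A pigeonhole argument on the scale $\log_\lambda(C_2/C_1)$ then extracts the desired pairs $(v_i, w_i)$ with gaps lying in $[\log_\lambda(C_2/C_1), K]$, completing the construction of the sequence $\{r_i\}$.
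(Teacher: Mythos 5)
Your diagnosis of the subtlety --- that a single transition point $v$ delivers the upper bound only at the smaller radius $C_1\lambda^{-d(v,1)}$ and the lower bound only at the larger radius $C_2\lambda^{-d(v,1)}$ --- is correct, and the paper's own proof, which simply writes $r=\lambda^{-d(v,1)}$ and cites Lemmas~\ref{diskinombre} and~\ref{weakombreII}, elides this mismatch. Your idea of offsetting two points on $\gamma$ by a fixed logarithmic amount to align the scales is the right way to close it.

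However, the ``hard part'' you isolate --- securing a uniformly bounded gap between two \emph{transition} points --- is an obstacle of your own making, and the mechanism you propose would not deliver it anyway: Lemma~\ref{charconical} gives a sequence $g_n$ shadowing $\gamma$ at a uniform radius but says nothing about their spacing, and projecting onto $\gamma$ and snapping to entry/exit points of peripheral neighbourhoods still leaves transition clusters separated by arbitrarily long deep excursions, so a pigeonhole at scale $\log_\lambda(C_2/C_1)$ does not produce infinitely many pairs in a bounded window. Fortunately the bounded-gap condition is not needed at all. Re-examine the proof of Lemma~\ref{diskinombre}: the second containment $\Pi_{r_0}(w)\subset B_{\rho_1}(\xi, C_2\lambda^{-d(w,1)})$ uses only that $w$ lies on $\gamma=[1,\xi]$ (one sums Floyd lengths along the two geodesic tails past $w$), not that $w$ is a transition point; the transition hypothesis enters \emph{only} in the first inclusion $B_{\rho_1}(\xi, C_1\lambda^{-d(v,1)})\subset\Pi_{r_0}(v)$, via Lemma~\ref{floydtrans}. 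So fix any transition point $v_i$ on $\gamma$ (infinitely many exist, as in the proof of Lemma~\ref{rephorofunc}), take $w_i$ to be the vertex on $\gamma$ at distance $d(v_i,1)+\log_\lambda(C_2/C_1)$ from $1$ with no further condition imposed, and set $r_i:=C_1\lambda^{-d(v_i,1)}=C_2\lambda^{-d(w_i,1)}$. Then $\mu_1(B_{\rho_1}(\xi,r_i))\le\mu_1(\Pi_{r_0}(v_i))$ and $\mu_1(B_{\rho_1}(\xi,r_i))\ge\mu_1(\varPi_{r_0}(w_i))$, and Shadow Lemma~\ref{ShadowLem} (i.e.\ Lemmas~\ref{weakombreI} and~\ref{weakombreII}, using the no-atoms-at-parabolics hypothesis) gives both sides $\asymp\exp(-\g G\, d(v_i,1))\asymp\exp(-\g G\, d(w_i,1))\asymp r_i^\sigma$, with constants depending only on $C_1$, $C_2$, $\g G$. (Minor: the exponent should read $\sigma=\g G/\log\lambda>0$; the displayed minus sign is a typo, which your own normalization already corrects.)
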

\begin{proof}
Observe that there are infinitely many $(\epsilon, R)$-transition points $v$ on
$\gamma:=[1, \xi]$.  (See the claim in the proof of Lemma \ref{rephorofunc}.)

By Lemmas \ref{diskinombre}, \ref{weakombreII}, there are $C_1, C_2>0$ such that 
\begin{equation}\label{ballestimate}
C_1 r^{-\sigma/\log
\lambda} \le \mu_1(B_{\rho_1}(\xi, r)) \le C_2
r^{-\sigma/\log
\lambda},
\end{equation}
where $r := \lambda^{-d(v,1)}$. Letting $d(v, 1) \to \infty$ gives a sequence $\{r_i
>0\}$ such that (\ref{ballestimate}) holds. This completes the proof.
\end{proof}

As a corollary, we obtain the following result.
\begin{lem}\label{conicnoatom}
The PS-measures have no atoms at conical points.
\end{lem}

As $G$ acts  geometrically finitely on $\pG$, there exist only bounded
parabolic points and conical points in $\pG$. Hence, Lemmas
\ref{parabnoatom} and \ref{conicnoatom} together prove the
following proposition.
\begin{prop}\label{PSatom}
Any PS-measure constructed on $\Gx \cup \pG$ is a $\g G$-dimensional
quasi-conformal density without atoms.
\end{prop}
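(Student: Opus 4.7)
The plan is to assemble this proposition as an immediate consequence of the three results already established in this subsection, together with the geometric finiteness of the action of $G$ on the Bowditch boundary. The quasi-conformal density statement itself is nothing new: Lemma \ref{prequasiconf} directly gives that any PS-measure $\{\mu_v\}_{v\in G}$ on $\Gx \cup \pG$ is a $\g G$-dimensional quasi-conformal density, so I would simply cite it at the start.

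The substantive content is the absence of atoms. My plan here is to invoke the dichotomy from the theory of relatively hyperbolic groups: since $\GP$ is relatively hyperbolic, the action of $G$ on $\pG$ is geometrically finite, which means every point $\xi \in \pG$ is either a bounded parabolic point or a conical limit point. It then suffices to exclude atoms in each of these two cases separately.

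For bounded parabolic points, I would cite Lemma \ref{parabnoatom}, whose proof used the fact that $\PS{P}$ is convergent at $s = \g G$ for each maximal parabolic subgroup $P$ (this in turn rests on Lemma \ref{convpara} applied to $P$, whose limit set is a single parabolic fixed point and hence is properly contained in $\pG$). For conical points, I would cite Lemma \ref{conicnoatom}, whose proof proceeds by first establishing the two-sided shadow estimate in Lemma \ref{ombredisks}, giving comparable upper and lower bounds $C_1 r_i^\sigma \le \mu_1(B_{\rho_1}(\xi, r_i)) \le C_2 r_i^\sigma$ along a sequence $r_i \to 0$, which forces $\mu_1(\xi) = 0$.

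There is no real obstacle here, as the proposition is an assembly statement. The only thing to double-check is that the dichotomy is exhaustive on all of $\pG$ (not just on the limit set), which follows because $G$ is non-elementary with limit set equal to $\pG$. Thus combining Lemmas \ref{prequasiconf}, \ref{parabnoatom}, and \ref{conicnoatom} completes the argument.
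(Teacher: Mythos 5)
Your proposal matches the paper's argument exactly: the paper likewise cites Lemma \ref{prequasiconf} for the quasi-conformal density, then uses geometric finiteness of the action on $\pG$ to split points into bounded parabolic and conical, handled by Lemmas \ref{parabnoatom} and \ref{conicnoatom} respectively. No gap.
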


In Appendix \ref{appendixA}, we further prove that PS-measures are unique and ergodic.

\section{Applications towards growth of cones and partial cones}\label{Section5}

This section applies the results established in Section \ref{Section4} to the study of the growth of cones in the Cayley graph $\Gx$. Let  $\{\mu_v\}_{v\in G}$ be PS-measures constructed through the action of $G$ on $\Gx\cup \pG$. The main tool is the partial Shadow Lemma that we shall prove in Subsection \ref{partialshadowlem}. 

We fix the basepoint $o=1$. For $r, \epsilon, R, \Delta > 0$, define
$$
\Omega_r(g, n, \Delta) = \Omega_r(g) \cap A(g, n, \Delta)
$$
and
$$
\Omega_{r, \epsilon, R}(g, n, \Delta) = \Omega_{r, \epsilon, R}(g)
\cap A(g, n, \Delta),
$$
for any $g \in G, n \ge 0$. 

\subsection{Growth of Cones}

We first consider the growth of cones. The proof of Lemma
\ref{ConeGrowthG} is simple, but illustrates the logic theme
of the proofs for more complicated ones, see Propositions \ref{PConeGrowthG} and
\ref{PConeGrowthX}.

\begin{lem}\label{ConeGrowthG}
There are constants $r, \Delta, \kappa>0$ such that the following
holds
$$
\sharp {\Omega_r(g, n, \Delta)} \ge \kappa \cdot \exp(\g G n)
$$
for any $g \in G$ and $n \ge 0$.
\end{lem}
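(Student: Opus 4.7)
The plan is to bound the cone from below by comparing PS-measures of shadows via the Shadow Lemma \ref{ShadowLem}. Fix $r \ge r_0$ with $r_0$ provided by Lemma \ref{ShadowLem}, and set $\Delta=1$. The heart of the argument is the set-theoretic inclusion
$$
\Pi_r(g)\ \subset\ \bigcup_{h\in \Omega_r(g,n,\Delta)} \Pi_r(h),
$$
valid for all sufficiently large $n$ (depending on $r$). Once this is established, combining the two sides of the Shadow Lemma yields
$$
\exp(-\g G\, d(1,g))\ \prec\ \mu_1(\Pi_r(g))\ \le\ \sum_{h\in \Omega_r(g,n,\Delta)} \mu_1(\Pi_r(h))\ \prec_r\ \sharp\,\Omega_r(g,n,\Delta)\cdot \exp\!\bigl(-\g G (d(1,g)+n)\bigr),
$$
which after cancelling $\exp(-\g G\, d(1,g))$ gives $\sharp\,\Omega_r(g,n,\Delta)\succ \exp(\g G\, n)$, as desired.

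To verify the inclusion, let $\xi\in \Pi_r(g)$ and fix a geodesic ray $\gamma=[1,\xi]$ realising this shadowing, so there is some $v\in \gamma\cap B(g,r)$. Since $\Gx$ is a Cayley graph with unit-length edges and $d(1,g)\in\mathbb{Z}_{\ge 0}$, the point $h:=\gamma(d(1,g)+n)$ (arc-length parametrisation from $1$) is a vertex, hence $h\in G$, and by construction $d(1,h)=d(1,g)+n$, so $h\in A(g,n,\Delta)$. For $n>r$ we have $d(1,v)\le d(1,g)+r<d(1,h)$, so $v$ lies on the subsegment $[1,h]_\gamma$, which is therefore a geodesic from $1$ to $h$ meeting $B(g,r)$; this shows $h\in \Omega_r(g)$. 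Finally, $\gamma$ itself is a geodesic from $1$ to $\xi$ passing through $h$, so $\xi\in \Pi_r(h)$, completing the inclusion.

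The remaining case $n\le r$ is trivial, since $g\in\Omega_r(g,0,\Delta)$ forces $\sharp\,\Omega_r(g,n,\Delta)\ge 1\ge \exp(-\g G\, r)\exp(\g G\, n)$, and the factor $\exp(-\g G\, r)$ is absorbed into $\kappa$. The only subtle step is arranging that $v$ precedes $h$ on $\gamma$, for which the threshold $n>r$ is exactly what is needed; everything else is direct bookkeeping with the Shadow Lemma, and no use of transition points or partial shadows is required at this stage.
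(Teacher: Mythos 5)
Your argument is the same as the paper's: cover $\Pi_r(g)$ by the shadows $\Pi_r(h)$ of orbit points $h$ at level roughly $n$ in the cone, then apply both halves of the Shadow Lemma \ref{ShadowLem}. You also make explicit the threshold $n>r$ that guarantees $v$ precedes $h=\gamma(d(1,g)+n)$, which the paper's proof of its Claim leaves implicit. The one place where the write-up slips is the residual range: with your choice $\Delta=1$, the element $g$ lies in $A(g,n,\Delta)$ only for $n\le 1$, so "$g\in\Omega_r(g,0,\Delta)$ forces $\sharp\,\Omega_r(g,n,\Delta)\ge 1$" does not actually cover $1<n\le r$; for such $n$ the point $h=\gamma(d(1,g)+n)$ could sit as far as $2r-n$ from $g$ and need not belong to $\Omega_r(g)$. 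The standard fix (and in effect what the paper's statement permits, since $\Delta$ is a free constant) is to take $\Delta\ge r$: then $g\in A(g,n,\Delta)$ for all $0\le n\le r$, giving $\sharp\,\Omega_r(g,n,\Delta)\ge 1$, and shrinking $\kappa$ below $\exp(-\g G\, r)$ makes the estimate trivial there. With that adjustment the proof is complete and matches the paper's.
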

\begin{proof}
Fix any $\Delta>0$. Let $r_0$ be the constant given by Shadow Lemma \ref{ShadowLem}. We
first note the following observation, roughly saying that the shadow
of a ball at $g$ can be covered by those of all elements at level
$n$ in the cone at $g$.

\begin{claim}
The following holds
$$
\mu_1(\Pi_{r}(g)) \le \sum\limits_{h \in \Omega_{r}(g,n, \Delta)}
\mu_1(\Pi_{r}(h)),
$$
for any $r >r_0$.
\end{claim}
\begin{proof}[Proof of Claim]
Let $\xi \in \Pi_r(g)$. Then there exists some geodesic $\gamma=[1,
\xi]$ such that $\gamma \cap B(g, r) \neq \emptyset$. Take $h \in
\gamma$ such that $d(h, 1)=n + d(1, g)$. Then $h \in \Omega_r(g,
n, 0)$. Hence, the conclusion follows.
\end{proof}

By Shadow Lemma \ref{ShadowLem}, it follows that
$$
\exp(n\g G) \cdot \mu_1(\Pi_r(h))  \asymp_{r, \Delta}
\mu_1(\Pi_{r}(g)),
$$
for any $h \in \Omega_{r}(g,n, \Delta)$. Hence the conclusion follows by the Claim above.
\end{proof}

\subsection{Partial Shadow Lemma}\label{partialshadowlem}
Let $\mathbb Y$ be a $G$-finite contracting system with the bounded intersection property in $\Gx$ such that $\mathbb P\subset \mathbb Y$ and, for each $Y\in \mathbb Y$, the stabilizer $G_Y$ acts co-compactly on $Y$. The partial cone we consider is defined using transition points relative to $\mathbb Y$. See Convention \ref{ConvContracting}. 

To study the growth of partial cones, the key tool is a variant of
the Shadow Lemma for partial shadows, which we shall call \textit{Partial
Shadow Lemma}.  
\begin{lem}[Partial Shadow Lemma]\label{PShadowLem}
Let $\epsilon$ be as in Convention \ref{epsilonR}. There exist
constants $r, R
>0$ such the following holds
$$
\exp(-\g G d(1, g)) \prec \mu_1(\Pi_{r, \epsilon, R}(g)),
$$
for any $g \in G$.
\end{lem}
\begin{rem}
We also prove  an analogue for   a cusp-uniform action of $G$ on $X$, cf. 
\cite[Lemma 3.18]{YANG8}.
\end{rem}

The following observation is a crucial fact in the proof.
\begin{lem}\label{convps}
For any $\varepsilon, r >0$, there exists $R=R(\varepsilon, r)>0$ such
that the following holds
$$
\sum \limits_{z \in Y}^{d(z, v) > R} \exp(-\g G d(v, z)) <
\varepsilon,
$$
for any $Y \in \mathbb Y$ and $v \in N_r(Y)$.
\end{lem}
\begin{proof}
Since $\mathbb P\subset \mathbb Y$ has the bounded intersection property, we see that the closure of each $Y\in \mathbb Y$ in $\pG$ is a proper subset so that the limit set of the stabilizer $G_Y$ of $Y$ in $G$ is also proper.   By Lemma \ref{convpara}, $\PSv{G_Y}$ is convergent at
$s=\g G$.  Since $G_Y$ acts co-compactly on $Y$, the lemma thus follows.
\end{proof}

\begin{proof}[Proof of Lemma \ref{PShadowLem}]
Given $g\in G$, there exist $r, C_1, C_2>0$ by Shadow Lemma \ref{ShadowLem} such that
\begin{equation}\label{LBND}
C_1 \exp(-\g G d(1, g)) \le \mu_1(\Pi_r(g)) \le C_2 \exp(-\g G d(1, g)).
\end{equation} 
Denote by $\mathbb F$ the set of $Y \in \mathbb Y$ such
that $Y \cap B(g, r + \epsilon) \ne \emptyset$. Then $\sharp \mathbb F$ is
a uniform number depending only on $G$.

By Lemma \ref{convps}, there exists $R_0>0$ depending on $r, \epsilon$ and $\sharp \mathbb F$ such that
\begin{equation}\label{SUM}
\begin{array}{rl}
\sum \limits_{z \in N_\epsilon(Y)}^{d(z, g) > R_0} \exp(-\g G d(g, z)) \cdot \sharp \mathbb F \cdot \exp(2\g G r) <
C_1/(2C_2)
\end{array}
\end{equation}
for any $Y \in \mathbb F$.

Let $R_1>0$ be given by Lemma \ref{entrytrans}.  Choose $R=r +
\max\{R_0, R_1\}$. In the remainder of the proof, we show that $r, R$ are
the desired constants.

Denote $\Xi := \Pi^c_r(g) \setminus \Pi_{r, \epsilon, R}(g)$. By definition of the partial shadow, for any given $\xi \in \Xi$, there exists a geodesic $\gamma=[1, \xi]$ such that $\gamma\cap B(g, r)\ne \emptyset$ and $\gamma$ does not contain an $(\epsilon,
R)$-transition point in the ball $B(g, 2R)$.

\begin{figure}[htb]
\centering \scalebox{0.4}{
\includegraphics{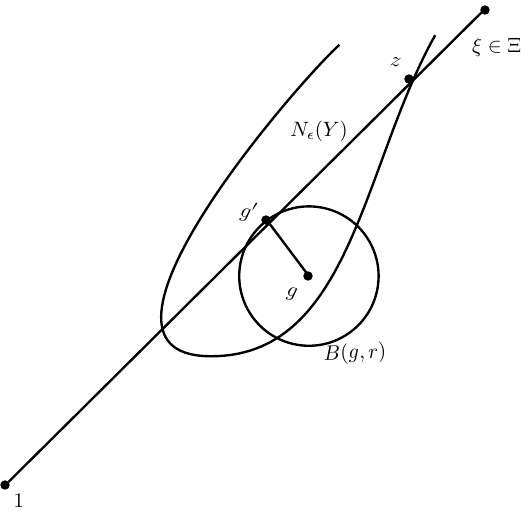} 
} \caption{Lemma \ref{PShadowLem}} \label{Figure1}
\end{figure}

Choose $g' \in B(g, r) \cap \gamma$ such that $d(g, g') < r$. Thus,
$\gamma$ does not contain an $(\epsilon, R)$-transition point in
$B(g', 2R-r)$. As $2R > r$,  we have that $g'$ is $(\epsilon, R)$-deep in some
$Y \in \mathbb Y$. Since $d(g, Y) < r + \epsilon$, we have $Y
\in \mathbb F$.

Let $z$ be the exit point of $\gamma$ in $N_\epsilon(Y)$, which exists since $\xi$ is conical. Since $d(x, z)\ge R$, by
Lemma \ref{entrytrans}, we have that $z$ is an $(\epsilon, R)$-transition point
in $\gamma$.  Hence, $d(z, g') >2R-r$ and then $d(z, g)\ge 2R-2r \ge R_0$.

Noting that $z\in [g', \xi]_\gamma$, we have $d(1, z) +2r > d(1, g) + d(g, z)$. Taking into account (\ref{SUM}), we estimate
$\mu_1(\Xi)$ as follows:
\begin{equation}\label{UBND}
\begin{array}{rl}
\mu_1(\Xi) & \le \sum\limits_{Y \in \mathbb F} \sum\limits_{z \in N_\epsilon(Y)}^{d(z, g) > R_0} \mu_1(\Pi_r(z))  \le C_2\cdot \sum\limits_{Y \in \mathbb F} \sum\limits_{z \in N_\epsilon(Y)}^{d(z, g) > R_0} \exp(-\g G d(1, z)) \\
&\le C_2 \cdot \exp(-\g G d(1, g)) \cdot \sum\limits_{Y \in \mathbb F} \sum\limits_{z \in N_\epsilon(Y)}^{d(z, g) > R_0}
 \exp(-\g G d(g, z)) \cdot \exp(2\g G r)\\
& \le \exp(-\g G d(1, g)) \cdot C_1/2.
\end{array}
\end{equation}
By Lemma \ref{parabnoatom}, we have $\mu_1(\Pi_{r}^c(g))=\mu_1(\Pi_{r}(g))$ and so   $$\mu_1(\Xi) + \mu_1(\Pi_{r, \epsilon, R}(g)) =
\mu_1(\Pi_r(g)).$$ So the inequalities
(\ref{LBND}) and (\ref{UBND}) yield $$\mu_1(\Pi_{r, \epsilon,
R}(g)) \ge (C_1/2) \exp(-\g G d(1, g)).$$ The proof is now complete.
\end{proof}

\subsection{Growth of partial cones}\label{SSpartialconegrowth}
We are in a position to prove the exponential growth of (the annulus
of) a partial cone. 

\begin{prop}\label{PConeGrowthG}
Let $r, \epsilon, R$ be as in Lemma \ref{PShadowLem}. There are
constants $\Delta, \kappa>0$ such that the following holds
$$
\sharp {\Omega_{r, \epsilon, R}(g, n, \Delta)} \ge \kappa \cdot
\exp(n\g G)
$$
for any $g \in G$ and $n \ge 3R+r$.
\end{prop}
\begin{proof}
Fix any $\Delta\ge 0$. We first note the following observation, roughly saying that the
partial shadow $\Pi_{r, \epsilon, R}(g)$ can be covered by those of
all elements at level $n$ in $\Omega_{r, \epsilon, R}(g, n, \Delta)$.

\begin{claim}
The
following holds
$$
\mu_1(\Pi_{r, \epsilon, R}(g)) \le \sum\limits_{h \in \Omega_{r,
\epsilon, R}(g,n, \Delta)} \mu_1(\Pi_{r}(h)),
$$
for any $n \ge 3R+r$.
\end{claim}
\begin{proof}[Proof of Claim]
Let $\xi \in \Pi_{r, \epsilon, R}(g)$. By definition, there exists a
geodesic $\gamma=[1, \xi]$ such that $\gamma$ contains an $(\epsilon, R)$-transition point $v$ in $B(g, 2R)$ and $\gamma \cap B(g,
r) \neq \emptyset$.

Let $h \in \gamma$ such that $d(h, 1)=n+d(1, g)$. Since $n
\ge 3R+r$, it follows that $d(g, [1, h]_\gamma)< r$ and $B(v, R) \cap \gamma  \subset
[1,h]_\gamma$. Then $v$ is an $(\epsilon, R)$-transition point in
$[1, h]_\gamma$. Thus, $h \in \Omega_{r,
\epsilon, R}(g, n, 0)$. Hence, the claim is proved.
\end{proof}

By Shadow Lemmas \ref{ShadowLem} and \ref{PShadowLem},  it follows
that
$$
\exp(n\g G) \cdot \mu_1(\Pi_r(h))  \asymp_{\epsilon, r, R,
\Delta} \mu_1(\Pi_{r,\epsilon, R}(g)),
$$
for any $h \in \Omega_{r, \epsilon, R}(g,n, \Delta)$. By the Claim
above we obtain that $$\sharp \Omega_{r, \epsilon, R+D}(g,n, \Delta)
\succ_{r, \epsilon, R, \Delta} \exp(n\g G)$$ for $n\ge 3R+r$. The
proof is complete.
\end{proof}

The \textit{dead-end depth} of an element $g$ in $\Gx$ is a non-negative integer strictly less than the length of a shortest word $w \in G$ such that $d(1, gw) > d(1, g)$. If the dead-end depth is non-zero, then $g$ is called a \textit{dead end} (i.e. the geodesic $[1, g]$ could not be extended furthermore). Such elements exist, for instance, in the Cayley
graph of $G \times Z_2$ with respect to a particular generating set. See \cite{GriH} for a brief discussion and
references therein. However, one could get around the dead end by the following result,
saying that for any $g \in G$, there exists a companion element near
$g$ with a very large cone.
\begin{lem}[Companion Cone]\label{CompanionCone}
Let $r, \epsilon, R$ be as in Lemma \ref{PShadowLem}. There are
constants $\Delta, \kappa>0$ with the following property.

For any $g \in G$, there exists $g' \in G$ such that $d(g, g') < r$
and the following holds $$ \sharp \Omega_{\epsilon, R}(g', n, \Delta)
\ge \kappa \cdot \exp(n\g G)
$$
for any $n \gg 0$. Moreover, we can choose $g'=1$ if $g=1$.
\end{lem}
\begin{proof}
Let $r, \epsilon, R, \Delta, \kappa$ be given by Proposition \ref{PConeGrowthG} such that $$
\sharp {\Omega_{r, \epsilon, R}(g, n, \Delta)} \ge \kappa \cdot
\exp(n\g G)
$$
for any $g \in G$ and $n \gg 0$. By definition, there exists a geodesic $\omega_h=[1, h]$ for $h \in \Omega_{r, \epsilon, R}(g, n, \Delta)$ such that $\omega_h$ contains an $(\epsilon, R)$-transition point in $B(g, 2R)$.  We apply a pigeonhole principle to the finite set $B(g, r)$: there exists $g'\in B(g, r)$ such that $$\sharp \Omega_{\epsilon, R+r}(g', n, \Delta+r) \ge  \frac{\kappa}{\sharp B(g, r)}  \cdot
\exp(n\g G).$$ If $g = 1$, this holds for $g'=1$ without using the pigeonhole principle. 
\end{proof}

\subsection{Large Transitional Trees}
In this subsection, we shall construct a tree $\mathcal T$ in $\Gx$
such that the growth rate of $\mathcal T$ is sufficiently close to
$\g G$ and transition points are uniformly spaced in $\mathcal T$.
Our construction is inspired by a nice argument of Coulon \cite{Coulon} in a
hyperbolic group.

The following lemma is useful in choosing a sufficiently separated
set. Recall that a metric space $(A, d)$ is \textit{$C$-separated} for $C>0$ if $d(a,a')
>C$ for any two distinct $a,a' \in A$.

\begin{lem}\label{sepnet}\cite[Lemma 5]{AL}
For any $C>0$ there exists a constant $\theta=\theta(C)
>0$ depending on $G$ and $S$ with the following property.

Let $Y$ be any finite set in $G$. Then there exists a $C$-separated
subset $Z \subset Y$ such that $\sharp Z > \theta \cdot \sharp Y$.
\end{lem}

Take $r, \epsilon, R, \Delta, \kappa>0$ which are given by Lemma
\ref{CompanionCone} such that the auxiliary set
\begin{equation}\label{essentialset}
E = \{g \in G: \sharp \Omega_{\epsilon, R}(g, n, \Delta) >
\kappa \cdot \exp(n\g G),  \forall n \ge 0\},
\end{equation}
contains $1$ and $N_r(E) = G$.

Fix $n,\; \kappa',\; C>0$. We make use of a
construction of Coulon in \cite{Coulon}, defining a sequence of
decreasing sets $G_{i,n}$ for $i \ge 0$ inductively. Let $G_{0,n} =
G$. Then set
$$G_{i+1,n} = \{g \in G_{i,n}: \sharp(\Omega_{\epsilon, 2C}(g, n, \Delta+3C) \cap G_{i,n}) > \kappa' \cdot \exp(n\g G)\}.$$

\begin{lem}\label{PreTreeSetG}
There exist $\kappa', C>0$ such that the following holds $$B(g, C) \cap G_{i, n} \neq
\emptyset$$
for each $g \in E$ and $i \ge 0,\; n\gg 0$. Moreover, $1 \in G_{i, n}$ for $i\ge 0$.
\end{lem}
\begin{proof}
The constant $C$ is given below in (\ref{ValueC}). We proceed by induction on $i$. For $i=0$, it is automatically true.
Assume that $B(g, C) \cap G_{j, n} \neq \emptyset$ and $1\in G_{i, n}$ for each $j \le
i$, $g \in E$. We shall show that $$B(g, C) \cap G_{i+1,n} \neq
\emptyset$$ for each $g \in E, n\gg 0$.

By definition of the set $E$ in (\ref{essentialset}), $\sharp
\Omega_{\epsilon, R}(g, n, \Delta) \ge \kappa \cdot \exp(\g G
n)$ for any $g \in E$. By Lemma \ref{sepnet}, there exist
$\theta=\theta(4C)>0$ and a $4C$-separated set $Z \subset
\Omega_{\epsilon, R}(g, n, \Delta)$ such that
\begin{equation}\label{Zset}
\sharp Z > \kappa \theta \cdot \exp(\g G n).
\end{equation}

Let $x \in Z$. By the inductive assumption on $i$ and $N_r(E)=G$,
there exists $h_x \in G_{i,n}$ such that $d(x, h_x) < 2C$.

Observe that $h_x \neq h_y$ for any $x \neq y \in Z$. Indeed, if
$h_x = h_y$, then $d(x, y) < 4C$ for $x, y \in Z$. This
contradicts to the choice of $Z$ as a $4C$-separated set in
$\Omega_{\epsilon, R}(g, n, \Delta)$. By (\ref{Zset}), the following holds 
\begin{equation}\label{Zset2}
\sharp \{h_x: x \in Z\} > \kappa \theta \cdot \exp(\g G n).
\end{equation}

Set $\displaystyle{\kappa':=\frac{\kappa\theta}{\sharp B(1, C)}}$. We next
show the following claim.
\begin{claim}
There exists an element $g' \in B(g, C)$ such that
$$\sharp (\Omega_{\epsilon, 2C}(g', n, \Delta+3C) \cap G_{i,n}) > \kappa'
\cdot \exp(n\g G).$$
Moreover, if $g=1$, we can choose $g'=1$.
\end{claim}
\begin{proof}[Proof of Claim]
Since $x \in Z \subset \Omega_{\epsilon, R}(g, n, \Delta)$, a geodesic $\gamma=[1,x]$ contains an $(\epsilon, R)$-transition
point $v \in \gamma \cap B(g, 2R)$. Note that $d(g, x) > n-\Delta$ and $d(g, v) \le 2R$.  Let $D=D(\epsilon, R), L=L(2C, \epsilon, R)>0$ be given by Lemma
\ref{PairTransG}, where 
\begin{equation}\label{ValueC}
C:=\max\{r, D+2R\}.
\end{equation}
By choosing $n \ge L +2R+\Delta$ we obtain
$d(v, x) > L$.

We apply Lemma
\ref{PairTransG} to the pair of geodesics $\gamma, \omega_x:=[1, h_x]$, where $d(x, h_x) < 2C$.
Then there exists an $(\epsilon, R)$-transition point $w$ in
$\omega_x$ such that $d(v, w) < D$. Hence, we have for any $h_x$ ($x  \in X$):
$$d(g, \omega_x) < D+2R \le C,$$ where $C$ is given by (\ref{ValueC}).

Since $d(x, h_x) \le 2C$ and $x \in \Omega_{\epsilon, R}(g, n, \Delta)$, we have $|d(h_x, g')- n|\le\Delta+3C$ for any $g' \in B(g, C)$. By using a pigeonhole principle on
$B(g, C)$, there exist $g' \in B(g, C)$ and a subset $$Y \subset
\{h_x: x \in Z\} \cap  \Omega(g', n, \Delta+3C)$$ with the following two properties:
\begin{itemize}
\item
the
following holds
\begin{equation}\label{YLBD}
\sharp (Y \cap G_{i,n}) > \kappa\theta /\sharp B(1, C) \cdot
\exp(n \g G) \ge \kappa' \cdot \exp(n \g G);
\end{equation}
\item for each $h_x \in Y$, the geodesic $\beta$ contains an
$(\epsilon, R)$-transition point $w$ such that $d(w, g') < 2C$.
\end{itemize}
Hence, $Y \subset \Omega_{\epsilon,
2C}(g', n, \Delta+3C)$ and (\ref{YLBD}) yields
$$\sharp (\Omega_{\epsilon, 2C}(g', n, \Delta+3C) \cap G_{i,n}) > \kappa'
\cdot \exp(n\g G).$$ If $g=1$, we can let $g'=1$ as in Lemma \ref{CompanionCone}. The claim is thus proved. 
\end{proof}

By the
Claim above, we have for any $j \le i$:
$$\sharp (\Omega_{\epsilon, 2C}(g', n, \Delta+3C) \cap G_{j,n})   > \kappa'
\cdot \exp(n \g G)$$  since $G_{i,n} \subset G_{j,n}$ . By definition of $G_i$, we see $g' \in
G_{i,n}$ and thus $g' \in G_{i+1, n}$ by the Claim again. Therefore, $B(g,
C) \cap G_{i+1, n} \neq \emptyset$ and $1\in G_{i+1, n}$.
\end{proof}

\begin{lem}\label{TreeSetG}
There exist $\epsilon, R, \Delta, \kappa>0$ with the following
property.

For any $n
\gg 0$ there exists a subset $1\in \hat G$ in $G$ with the following
property
$$\sharp(\Omega_{\epsilon, R}(g, n,\Delta) \cap \hat G) > \kappa \cdot\exp(n\g G)$$
for any $g \in \hat G$.
\end{lem}
\begin{proof}
Denote $R=2C, \Delta=\Delta+3C, \kappa=\kappa'$, where $\kappa'$ is given as above. Defining $\hat G = \bigcap_{i\ge 0}
G_{i, n}$ completes the proof. 
\end{proof}

\begin{thm}[Large Trees]\label{LargeTreeG}
Let $\epsilon, R, \Delta$ be given by Lemma \ref{TreeSetG}, satisfying also
Convention \ref{epsilonR}.

For any $0 <\sigma < \g G$, there exist $L >0$ and a rooted geodesic tree
$\mathcal T$ at $1$ in $\Gx$ with the following properties.
\begin{enumerate}
\item
Let $\gamma$ be a geodesic in $\mathcal T$ such that $\gamma_-=1$. For any $x \in \gamma$, there exists an $(\epsilon, R)$-transition point $v$ in $\gamma$
such that $d(x, v)<L$.
\item
Let $\g{\mathcal T}$ be the growth rate of $\mathcal T$ in $G$. Then
$\g{\mathcal T} \ge \sigma$.
\end{enumerate}
\end{thm}
\begin{proof}
Let $1 \in \hat G$ be given by Lemma \ref{TreeSetG}. Without loss of generality assume that $\kappa < 1$. Fix $L>0$. The root of
$\mathcal T$ is $\mathcal T_0=\{1\}$. Assume that $\mathcal T_i$ is
defined for $i\ge 0$. For each $x \in \mathcal T_i$, choose a subset $\mathcal T_i(x)$ in $\Omega_{\epsilon, R}(x, L,\Delta) \cap \hat G$ such that
$$
\sharp \mathcal T_i(x) \ge \kappa \cdot\exp(L\g G),
$$
and $\mathcal T_i(x)$ is $C$-separated, where $C$ is defined below.
Then set
\begin{equation}\label{GeodTree}
\mathcal T_{i+1} = \cup_{x \in \mathcal T_i} (\cup_{y \in \mathcal
T_i(x)}[1, x] \cdot [x, y]),
\end{equation}
where the geodesic $[1, x]$ is the one in $\mathcal T_i$ and $[x,y]$ is any choice of geodesic. 

Observe that $\mathcal T_k$ is a rooted tree at $1$ for each $k>0$. Suppose not, let $\alpha, \beta$ be a pair of geodesics in $\mathcal T_k$ with the common endpoints $z \in \mathcal T_i$ and $w\in \mathcal T_j$ ($i, j \le k$) such that $j-i>0$ is minimal. Let $x=\alpha\cap \mathcal T_i(z)$ and $y=\beta\cap \mathcal T_i(z)$ where $T_i(z) \subset  \Omega_{\epsilon, R}(z, L,\Delta)$. So we have  $x\ne y$ and $j-i\ge 2$. Since $x$ has a large partial cone by construction, there exists an $(\epsilon, R)$-transition point $\tilde x$ in $\alpha$ such that $d(x, \tilde x)\le 2R$.   By Lemmas \ref{floydtrans} and \ref{karlssonlem} there exists $D=D(\epsilon, R)>0$ such that $d(\tilde x, \beta)\le D$ and so $d(x, \beta) \le D+2R$. Since $x, y\in \Omega_{\epsilon, R}(z, L,\Delta)$ we have $|d(x, z)-d(y, z)|\le 2\Delta$. By a standard argument, we have $d(x, y)\le 2(D+2R+\Delta)$. Define $$C=2(D+2R+\Delta)+1.$$ This gives a contradiction as $\mathcal T_i(z)$ is $C$-separated. Thus $\mathcal T_i$ is a rooted tree.  

We take the union $\mathcal T =\cup_{i\to
\infty} \mathcal T_i$. By construction, for any point $x \in \gamma$ there exists an $(\epsilon, R)$-transition point $v$ such that $d(x, v) \le L+2R+\Delta$. The first statement is true. For the statement (2), we compute
$$\sharp B(1, n(L+\Delta)) \cap \mathcal T >\kappa^n \cdot\exp(nL\cdot \g
G),$$ which yields $\g{\mathcal T} >  (\log \kappa + r \g
G)/(L+\Delta)$. Hence, $\g{\mathcal T} \to \g G$ as $L \to \infty$. This concludes the proof.
\end{proof}


\section{Lift paths in nerve graphs}\label{Section6}
Recall that $G,\; X,\; \mathbb U,\;  \mathcal U$
are as in Definition \ref{RHdefn}. Fix a finite generating set $S$. 

The aim of this section
is to make precise a relation between the graph $\Gx$ and the
orbit structure for the action of $G$ on $X$. Most results here are well-known, cf \cite[Section 6]{Bow1}, but the presentation is adapted to our purpose.


Fix a basepoint $o \in X\setminus \mathcal U$. Without loss of generality, assume that any geodesic $[o, so]$ for $s\in S$ does not intersect $\mathcal U$ by shrinking equivariantly $\mathbb U$. Choose a
projection point $o_U$ of $o$ to each $U \in \mathbb{ U}$. We choose, and denote by $\mathbb{\bar{U}}$, a complete set of \textit{shortest} $G$-representatives
$U$ in $\mathbb U$ such that $d(o, U)=d(o, gU)$ for any $g\in G$. The
\textit{nerve graph} $\Gr$ over $Go \cup \mathbb U$ is constructed as
follows.
\subsection{Nerve graph}
Let $V(\Gr) = Go \cup \mathbb U$ be the union of two types of
vertices: \textit{orbit vertices} $Go$ and \textit{horoball
vertices} $\mathbb U$. Connect $o$ to $o_U$ by edge $(o, o_U)$ for
each $U \in \mathbb{\bar{U}}$, and $o$ to $so$ by edge $(o, so)$ for
each $s \in S$. This incident relation extends over $V(\Gr)$ in a
$G$-equivariant manner. There are no edges between two horoball
vertices.   It is obvious that
$\Gr$ is connected, on which $G$ acts co-compactly but not properly
in general.

By construction, we can set the length of (translated) edges $(o,
o_U)$(resp. $(o, so)$) to be $d(o, o_U)$(resp. $d(o, so)$). This
gives a geodesic length metric $d_\Gr$ on $\Gr$.

Define
\begin{equation}\label{M}
M=\max\{d(o,o_U), d(o, so): U\in \mathbb{\bar{U}}; s\in S\},
\end{equation}
for which $X \setminus \mathcal U \subset N_M(Go)$.

Note that  $(\Gr, d_\Gr)$ is quasi-isometric to the cone-off Cayley graph $\G$ in the sense of Farb \cite{Farb}.  The next 
result thus follows from \cite[Proposition 8.13]{Hru}, plus the
fact that $G$ acts properly on $X$.

\begin{lem}\label{relgeodesic}
There exists a constant $D>0$ such that the following holds. Assume
that  $\alpha=[1, g]$ a geodesic in $\Gx$, and $\gamma$ is a geodesic in $(\Gr, d_\Gr)$ between $o$ and
$go$. For any orbit vertex $ho
\in \gamma$, there exists $f \in \alpha$ such that $d(ho, fo) < D$.
\end{lem}

\subsection{Lift paths}
Given a path $\gamma$ in $\mathcal G$, we shall construct a lift path
$\hat \gamma$ in $X$. Fix a (choice of) geodesic $[o, o_U]$ for each $U
\in \mathbb{\bar U}$, and a geodesic $[o, so]$ for each $s \in S$.

If $\gamma$ contains no horoball vertices, then $p$ can be
naturally seen as a concatenation path in $X$ consisting of
(translated) geodesic segments $[o, so]$ for edges labeled by $s$ in
$\gamma$. By the choice of $ \mathbb{U}$, the path $\gamma$ lies entirely in $X\setminus \mathcal U$.

In order to obtain the lift path in general case, it suffices to
modify pairs of edges adjacent to a horoball vertex.

\begin{defnconst}[Lift path in $\mathcal G$] \label{defnlift}
Let $\gamma$ be a path in $\mathcal G$ with at least one orbit vertex.
The \textit{lift} of $\gamma$ is a path $\hat \gamma$ in $X$ obtained as
follows.

Let $U \in \mathbb U$ be a horoball vertex in $\gamma$. Denote by $U_-,
U_+$ respectively, if exist, the previous and next vertices adjacent to $U$
in $\gamma$. Choose projection points $u_-, u_+$, if exist, of $U_-, U_+$
on the horoball $U$ respectively.

If $U =\gamma_-$(resp. $U =\gamma_+$), we replace the edge $(U, U_+)$(resp.
$(U_-, U)$) of $\gamma$ by a geodesic $[u_-, U_+]$(resp. $[U_-, u_+]$) in
$X$.

If $U\neq \gamma_-, \gamma_+$, then $U_-, U_+ \in Go$. We replace
the subpath $[U_-, U_+]_\gamma$ of $p$ by the concatenated path
$$[U_-, u_-][u_-, u_+][u_+, U_+]$$ in $X$.

Repeating the above procedure for every horoball vertex $U$, we get the
lift $\hat \gamma$.
\end{defnconst}
\begin{rem}\label{decomplift}
With notations in definition \ref{defnlift}, the lift path $\hat \gamma$
can be divided into the following form:
\begin{equation}\label{liftpathform}
\hat \gamma = \hat \gamma_1 \cdot [(u_1)_-, (u_1)_+] \cdot \hat \gamma_2 \dots \hat \gamma_i \cdot [(u_i)_-, (u_i)_+] \cdot \hat \gamma_{i+1} \dots,
\end{equation}
where $\gamma_i$ are maximal subsegments of $\gamma$ with
horoball vertices at endpoints, and $U$ are the common horoball
vertex with $U=(\gamma_i)_+=(\gamma_{i+1})_-$.
\end{rem}

\begin{lem}\label{lift}
The lift of a quasi-geodesic in $(\Gr, d_\Gr)$ is a quasi-geodesic
in $(X, d)$.
\end{lem}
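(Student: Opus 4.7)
First I would reduce to the case where $p$ is a geodesic in $(\Gr, d_\Gr)$. Since $(\Gr, d_\Gr)$ is quasi-isometric to Farb's coned-off Cayley graph, it is Gromov hyperbolic, so the Morse Lemma guarantees that any $(\lambda, c)$-quasi-geodesic in $\Gr$ lies within bounded Hausdorff distance of a $\Gr$-geodesic; a straightforward check shows the corresponding lifts agree in $X$ up to a uniform tubular neighborhood, so it suffices to treat geodesics. I would then parametrize $\hat p$ by arc length in $X$ and aim to prove the two quasi-geodesic inequalities.

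The upper bound $d_X(\hat p(s), \hat p(t)) \le |s-t|$ is immediate from the triangle inequality, since $\hat p$ is a continuous path of length $|s-t|$ between these points. For the substantive direction $|s-t| \le \lambda' d_X(\hat p(s), \hat p(t)) + c'$, I would decompose $|s-t| = L_\Gr + B$, where $L_\Gr$ is the $d_\Gr$-length of the corresponding sub-path $p|_{[s', t']}$ and $B = \sum_i d(u_{i,-}, u_{i,+})$ is the total length of horoball bridges traversed. The term $L_\Gr$ is controlled by combining the geodesicity of $p$ with the bi-Lipschitz equivalence $\iota$ of Lemma \ref{bilips}: $L_\Gr = d_\Gr(p(s'), p(t')) \asymp \hat d(\hat p(s), \hat p(t)) \le d_X(\hat p(s), \hat p(t))$, where the $\hat d$-equality exploits that $\hat p$ agrees with $\iota\circ p$ at orbit-vertex parameters and otherwise lies in a common horoball on which $\hat d$ vanishes, together with the trivial inequality $\hat d \le d$.

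The main obstacle is bounding the total bridge length $B$ linearly in $d_X(\hat p(s), \hat p(t))$. I would compare $\hat p$ with the $X$-geodesic $\eta = [\hat p(s), \hat p(t)]$ via the contracting property of the horoballs $\mathbb U$ (Lemma \ref{contracting}). For each horoball $U_i$ crossed by $\hat p$, the adjacent orbit vertices $U_{i,\pm}$ of $p$ are close to $\eta$ in the $\hat d$-metric (again transferred via $\iota$ from the $\Gr$-geodesicity of $p$), so their projections $u_{i,\pm}$ onto $U_i$ lie within uniformly bounded $X$-distance of the entry/exit points of $\eta$ in $U_i$; if instead $\eta$ does not enter $U_i$, then $d(u_{i,-}, u_{i,+})$ is itself uniformly bounded by the contracting estimate $\diam \proj_{U_i}(\eta) \le D$. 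Summing over the bridges without overcounting relies on the bounded intersection property (Lemma \ref{bipbpp}), which ensures that $\eta$ meets each $U_i$ in a single controlled sub-segment and that distinct horoballs contribute essentially disjoint portions of $\eta$. Together with the fact that the number of bridges is linearly bounded by $L_\Gr$, this yields $B = O(d_X(\hat p(s), \hat p(t)))$, and combining with the bound on $L_\Gr$ gives the desired quasi-geodesic inequality. The technical crux is precisely matching bridges of $\hat p$ to horoball sub-segments of $\eta$ under $\hat d$-stability.
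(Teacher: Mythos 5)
Your reduction to the case of a $\Gr$-geodesic is fine, and the bound on the $d_\Gr$-length via Lemma \ref{bilips} is also correct. The gap is in the bridge-matching step, and it is a genuine one.

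You assert that because $U_{i,\pm}$ are $\hat d$-close to the $X$-geodesic $\eta$, their projections $u_{i,\pm}$ onto $U_i$ lie within \emph{uniformly} bounded $X$-distance of the entry/exit points of $\eta$ in $U_i$. This does not follow. The pseudo-metric $\hat d$ vanishes on every horoball, so a chain realizing a small $\hat d$-distance is allowed to ``teleport'' arbitrarily far through intermediate horoballs; $\hat d$-closeness to $\eta$ gives no control whatsoever on the $d$-distance from $u_{i,-}$ to the place where $\eta$ actually enters $U_i$. To control that distance one ends up projecting the initial sub-path $\hat p|_{[\hat p_-,\,u_{i,-}]}$ to $U_i$, and the only available estimate (bounded projection of each inner segment $\hat q_j$ plus bounded projection between distinct peripheral sets, Lemma \ref{bipbpp}) accumulates a contribution from every segment and every horoball preceding $U_i$. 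That gives a bound on each bridge $d(u_{i,-},u_{i,+})$ that is \emph{linear in} $d(\hat p_-,\hat p_+)$, not uniform per bridge, so summing over the (linearly many) bridges only yields a \emph{quadratic} bound on $\len(\hat p)$ in terms of $d(\hat p_-,\hat p_+)$.

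What is missing from your proposal is precisely the bootstrap that closes this gap: once one has the quadratic bound, the exponential divergence of geodesics in the hyperbolic space $X$ forces the length to in fact be linearly bounded. This last step (Bowditch's argument, which the paper invokes explicitly) is not optional -- without it the proof stalls at the quadratic stage. As written, your direct ``bridge = horoball sub-segment of $\eta$ up to bounded error'' picture is essentially the \emph{conclusion} of the Morse Lemma applied to $\hat p$, i.e.\ it presupposes that $\hat p$ is already known to be an $X$-quasi-geodesic, which is what you are trying to prove.
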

\begin{proof}
The path $\hat p$ is an efficient semi-polygonal path in the sense of Bowditch in \cite[Section 7]{Bow1}. The lemma follows as a consequence of Lemma 7.3 in \cite{Bow1}.
\end{proof}

\subsection{Transitions points revisited}
In this subsection, we discuss the notion of transition points in
$X$ with respect to the horoball system $\mathbb U$. Recall that $\delta>0$ is the hyperbolicity constant of $X$.

The following lemma says that the image of a geodesic in $\Gx$
follows closely transition points of any geodesic with the same
endpoints in $X$.
\begin{lem}\label{TransCloseG}
There exist $\epsilon>0$ and $D = D(\epsilon, R)$ for any $R
>0$ such that the following holds.

Given $g \in G$, consider  a
word geodesic $\alpha=[1, g]$ in $\Gx$ and a geodesic $\beta=[o, go]$ in $X$. Then for any $(\epsilon,
R)$-transition point $x \in \beta$, there exists a vertex $h \in
\alpha$ such that $d(ho, x)< D$.
\end{lem}
\begin{proof}
Let $\gamma$ be a geodesic with endpoints $o, go$ in $(\Gr, d_\Gr)$. By Lemma \ref{lift},  the lift path $\hat \gamma$ is a $(\lambda,
c)$-quasi-geodesic in $X$ for some $\lambda, c>0$. By the stability of quasi-geodesics, there exists $\varepsilon=\varepsilon(\lambda, c)>0$ such that $\beta$ and $\hat \gamma$ have a Hasudorff distance at most $\varepsilon$. Write the lift path $\hat \gamma$ as in (\ref{liftpathform}) and so we have two cases to consider: 

\textbf{Case 1}: $x$ has a $\varepsilon$-distance to $\hat \gamma_i$ for some $i$. For $M$ given in (\ref{M}), $x$ has a distance at most $(\varepsilon+M)$  to an orbit vertex $v$ of $\gamma$. Let $D_1$ be given by Lemma \ref{relgeodesic} so that there exists $h \in \alpha$
such that $d(ho, v) < D_1$ and so $d(x, ho)\le M+D_1+\varepsilon$.   

\textbf{Case 2}: $d(x, [u_-, u_+])\le \varepsilon$, where $u_-, u_+ \in U$ for some $U$.  By the stability of quasi-geodesics, we have $d(u_-, y), d(u_+, z)\le \varepsilon$ for some $y, z\in \beta$. Without loss of generality, assume that $x\in [y, z]_\beta$.

Let $L=L(R)$ given by Lemma \ref{uniformtrans} such that if the following holds $$\min\{d(x, y)-d(y, U),\; d(x, z)-d(z, U)\}\ge L,$$ then $x$ is $(\epsilon, R)$-deep in $U$.  So,  $x$ has a distance at most $L+\varepsilon$ to one of $u_-, u_+$, both of which have a distance at most $M$ (\ref{M}) to an orbit vertex in $p$. Then as the Case (1), there exists $h\in \alpha$ such that $d(ho, x) \le D_1+M+L+\varepsilon$.  The lemma is proved.
\end{proof}
\begin{rem}
Since $G$ acts co-compactly on $X \setminus \mathcal U$, we can
always find $h \in G$ such that $d(ho, x) < D$. The point of
the lemma lies in the statement that $h$ can be chosen to be lying on
the geodesic $[1, g]$.
\end{rem}

\begin{conv}[about $\epsilon, R$]\label{metricepsilonR}
When talking about $(\epsilon, R)$-transition points in $X$ we
always assume that $\epsilon>\epsilon_0$, where $\epsilon_0$ are
given by Lemmas \ref{uniformtrans}, \ref{entrytrans}, and
\ref{TransCloseG}. In addition, assume that $R >R(\epsilon)$, where
$R(\epsilon)$ is given by Lemma \ref{entrytrans}.
\end{conv}

The following, an analogue of Lemma \ref{PairTransG}, will play the same role in cusp-uniform actions. 
\begin{lem}\cite[Lemma 2.13]{YANG8}\label{PairTransX}
Let $\epsilon, R>0$ be in Convention (\ref{metricepsilonR}). For any
$r>0$, there exist  $D=D(\epsilon, R), L =L(\epsilon, R, r)>0$ with
the following property.

Let $\alpha, \gamma$ be two geodesics in $X$ such that
$\alpha_-=\gamma_-, \; d(\alpha_+, \gamma_+) < r$. Take an
$(\epsilon, R)$-transition point $v$ in $\alpha$ such that $d(v,
\alpha_+) >L$. Then there exists an $(\epsilon, R)$-transition point
$w$ in $\gamma$ such that $d(v, w) < D$.
\end{lem}

\section{Large quasi-trees for a cusp-uniform action}\label{Section7}
Recall that $\GP$ is relatively hyperbolic, and $X,\;  \mathcal U,\;
\mathbb U,\; \mathbb{\bar U}$ are as in Definition
\ref{RHdefn}. Suppose that $G$ has the parabolic gap property.

\subsection{Growth of the orbit in partial cones}
We consider a contracting system $\mathbb Y$ in $X$ as in Convention \ref{ConvContracting} such that $\mathbb U\subset \mathbb Y$. The partial cone $\Omega_{r, \epsilon, R}(go)$ is defined using transition points relative to $\mathbb Y$. The following proposition holds by a more general result in \cite[Corollary 1.9]{YANG8}.

\begin{prop}\label{PConeGrowthX}
There are
constants $r, \epsilon, R, \Delta, \kappa
>0$ such that the following holds
$$
\sharp \Omega_{r, \epsilon, R}(go, n, \Delta) \ge \kappa \cdot
\exp(n\g G)
$$
for any $g\in G, n \ge 0$.
\end{prop}
\begin{rem}
In \cite{YANG8} we characterize the purely exponential growth of (partial) cones by a condition introduced by Dal'bo-Otal-Peign\'e \cite{DOP}. The parabolic gap property ensures their condition to hold.
\end{rem}

\subsection{Large Quasi-Trees in Hyperbolic Spaces}
This subsection is devoted to an analogue of Theorem
\ref{LargeTreeG}. The difficulty to do so is that there is no
obvious way to ``label" geodesics with endpoints in $Go$ in the space
$X$. Our strategy is to map into $X$ the set of
geodesics between $1$ and $g$ in $\Gx$ so that the existence of the transition points
in the partial cone will force these (images of) geodesics
travel uniformly close to the base $go$ of the cone. This allows us
to adapt the methods in proving Theorem \ref{LargeTreeG} in this
setting.

Recall that $\Omega(g)$ denotes the cone defined in the Cayley graph $\Gx$ (See Definition \ref{defnpcone}). Define 
$$\mathcal L_{r, \epsilon, R}(go, n,
\Delta) := \Omega(g) \cap \Omega_{r, \epsilon, R}(go, n, \Delta).$$ Said differently, $\mathcal L_{r, \epsilon, R}(go, n,
\Delta)$ is a subset of elements $h \in \Omega_{r, \epsilon, R}(go, n, \Delta)$ such that some geodesic   $[1, h]$ in $\Gx$ passes through $g$.

\begin{lem}[Companion cone]\label{metriccompanion}
There are constants $r, \epsilon, R, \Delta, \kappa>0$ such that the
following holds.

For any $g \in G$, there exists $g' \in G$ such that $d(go, g'o) <
r$ and the following holds $$ \sharp  \mathcal L_{r, \epsilon, R}(g'o,
n, \Delta) \ge \kappa \cdot \exp(n\g G)
$$
for any $n \ge 0$. If $g=1$, then $g'=1$.
\end{lem}
\begin{proof}
The proof proceeds similarly as that of Lemma \ref{CompanionCone},  making use of Lemma
\ref{TransCloseG} instead of Lemma
\ref{PairTransG}. We leave the proof to the interested reader. 
\end{proof}
 
The proof of Lemma \ref{PreTreeSet} below goes similarly as that of Lemma
\ref{PreTreeSetG}. We single out the differences only and refer the reader to the proof Lemma \ref{PreTreeSetG}.

Take $r, \epsilon, R, \Delta, \kappa>0$ by Lemma
\ref{metriccompanion}. Define the auxiliary set
\begin{equation}\label{companionset}
E = \{g \in G: \sharp \mathcal L_{r, \epsilon, R}(go, n, \Delta) >
\kappa \cdot \exp(n\g G),  \forall n \ge 0\}.
\end{equation} Note that $N_r(Eo) = Go$.

Let $D=D(\epsilon, R)$ satisfy Lemmas \ref{TransCloseG},
\ref{PairTransX}, and $L=L(2C)>0$ be given by Lemma \ref{PairTransX},
where $$C:=D+2R.$$

By Lemma \ref{sepnet} and the proper action of $G$ on $X$, there
exists $\theta=\theta(4C)>0$ with the following property. For any
finite subset $Y$ in $G$, there exists a subset $Z$ in $Y$ such that
$Zo$ is $(4C)$-separated in $X$ and $\sharp Z
>\theta \cdot \sharp Y$.

Let $\kappa' =\kappa\theta/\sharp N(o, C)$. Fix $n>0$. We define a sequence of
decreasing sets $G_{i,n}$ for $i \ge 0$ inductively. Let $G_{0,n} =
G$. Then set
$$G_{i+1,n} = \{g \in G_{i,n}: \sharp(\mathcal L_{r, \epsilon, 2C}(go, n, \Delta+3C) \cap G_{i,n}) > \kappa' \cdot \exp(n\g G)\}.$$

\begin{lem}\label{PreTreeSet}
For $n\gg 0$, the following holds $$B(go, C) \cap G_{i, n}o \neq
\emptyset$$
 for each $g \in E$ and $i \ge 0$. Moreover, $1 \in G_{i, n}$ for $i\ge 0$.
\end{lem}
\begin{proof}
The argument is completely analogous, except that Lemma
\ref{TransCloseG} is used instead of Lemma \ref{PairTransG}.
\end{proof}

The analogue of Lemma \ref{PreTreeSetG} therefore follows.

\begin{lem}\label{TreeSetX}
There exist $r, \epsilon, R, \Delta,\kappa>0$ such that for any $n
\gg 0$, there exists a subset $1\in \hat G$ in $G$ with the following
property
$$\sharp( \mathcal L_{r, \epsilon, R}(go, n,\Delta) \cap \hat G) > \kappa \cdot\exp(n\g G)$$
for any $g \in \hat G$.
\end{lem}

The following is analogous to Theorem \ref{LargeTreeG}.
\begin{thm}[Large Quasi-Trees]\label{LargeTreeX}
There exist $\epsilon, R, D>0$ such that the following holds. For
any $0 <\sigma < \g G$, there exist $L >0$ and a rooted geodesic tree
$\mathcal T$ at $1$ in $\Gx$ with the following properties.
\begin{enumerate}
\item
For any vertex $g$ in $\mathcal T$, the image of a geodesic
$[1, g]$ has at most a $D$-Hausdorff distance to any geodesic $[o,
go]$.
\item
Consider a geodesic $\gamma= [o,go]$. For any $x \in \gamma$, there
exists an $(\epsilon, R)$-transition point $y \in \gamma$ such that
$d(x, y) < L$.
\item
Let $\g{\mathcal T}$ be the growth rate of the set $\mathcal To$ in  
$X$. Then $\g{\mathcal T} \ge \sigma$.
\end{enumerate}
\end{thm}

\begin{proof}
Let $r, \epsilon, R, \Delta,\kappa>0$ be given by Lemma
\ref{TreeSetX}. For any $L>0$, the geodesic tree
$\mathcal T=\cup_{i\to \infty} \mathcal T_i$ is constructed in $\Gx$ exactly
as in the proof of Theorem \ref{LargeTreeG}, cf. (\ref{GeodTree}). By
construction,  $|d(xo, yo) -L|< \Delta$ for any $y \in \mathcal
T'_{i}(x)$. Following $\mathcal T$, we construct a quasi-embedded
tree in $X$. Let $\mathcal T'_0=\{o\}$ and $$\mathcal T'_{i+1} = \cup_{x
\in \mathcal T_i} (\cup_{y \in \mathcal T_{i+1}(x)} [xo, yo]),$$
where $[xo, yo]$ is a choice of a geodesic between $xo, yo$. Set
$\mathcal T': = \cup_{i\ge 0} \mathcal T'_i$. This is indeed a quasi-tree by the following claim.  
\begin{claim}
Assume $L \gg 0$. Then there exist $\lambda=\lambda(r), c=c(r)$
such that each path $p$ originating at $o$ in $\mathcal T'$ is a $(\lambda,
c)$-quasi-geodesic.
\end{claim}
\begin{proof}[Proof of Claim]
We shall show that any subpath of length at most $2(L-\Delta)$ is a $(1, 2r+2\delta)$-quasi-geodesic. Assume that $p = [o, xo][xo, yo][yo, zo]\cdots$, where $x \in
\mathcal T_0(1), y \in \mathcal T_1(x), z \in \mathcal T_2(y)$. As
$y \in \Omega_{r, \epsilon, R}(xo, L, \Delta)$, it follows that $d(xo, [o,
yo]) < r$. Hence, $[o,yo]_p$ is a $(1, 2r+2\delta)$-quasi-geodesic.

Let $w \in [o, zo]$ such that
$d(w, yo) < r$. Observe that $d(w, [xo, zo]) \le \delta$ for $L\gg 0$. If not,
by the $\delta$-thin triangle property there exists $w' \in [o,xo]$ such that $d(w,
w') \le \delta$. As $[o, yo]_p$ is $(1,2r+2\delta)$-quasi-geodesic, we have
$d(xo, yo) \le \len([w', yo]_p) \le d(w', yo) +2r+2\delta< 2r+3\delta$. As $\mathcal
T_i(x) \subset \Omega_{r, \epsilon, R}(xo, L, \Delta)$, we have
$d(xo, yo) >L-\Delta$. Assuming $L > 2r+\Delta+3\delta$ would give a
contradiction.  Hence, $d(w, [xo, zo]) \le \delta$ and 
$d(yo, [xo, zo])\le r + \delta$. As a consequence, $[xo, zo]_p$ is
a $(1, 2r+2\delta)$-quasi-geodesic.

Inductively, we prove that $p$ is a local quasi-geodesic as stated above. By
taking $L$ large enough, we see that $p$ becomes a global
quasi-geodesic.
\end{proof}

Once we proved that $\mathcal T'$ is a quasi-tree, the estimate of the growth
rate  of $\mathcal To$ is the same as in the proof of Theorem
\ref{LargeTreeG}. So for any $\delta <\g G$ we can find $L>0$ such that
the statement (3) holds.

We now prove the statement (2). Let $p$ be the path in $\mathcal T'$ between $o$
and $go$. By the above Claim, there exists $D_1=D(\lambda, c)$ such
that for any $x\in \gamma$, we have $ho \in p$ for some $h \in \mathcal T$
with $d(x, ho) < D_1$. For concreteness, assume $h \in \mathcal L_{r, \epsilon,
R}(zo, L, \Delta)$ for some $z \in \mathcal T$. Thus, some geodesic $[o,
ho]$ contains an $(\epsilon, R)$-transition point $w$ in $B(zo,
2R)$. 

Let $D_2=D(\epsilon, R), L_1=L(\epsilon, R, D_1)$ be given by Lemma
\ref{PairTransX}. Note that $d(x, ho)\le D_1$ and $d(w, ho)\ge d(ho, zo)-d(zo, w) \ge L-2R$. We assume that $L>L_1+2R$ and then apply Lemma
\ref{PairTransX} to the pair of geodesics $[o, x]_\gamma$ and $[o, ho]$: there exists an
$(\epsilon, R)$-transition point $y\in\gamma$ such that $d(w, y) <
D_2$. Hence, $d(x, y) < d(x, ho) +d(ho, zo)+d(zo, w)+d(w, y) <
L+D_1+D_2+2R$. So (2) is proved.
\end{proof}

\section{Small cancellation in relatively hyperbolic groups}\label{Section8}
The notion of a rotating family was due to Gromov, and further developed by Coulon in \cite{Coulon2} and Dahmani-Guirardel-Osin in \cite{DGO}.
In this section, we make use of the theory of a very rotating family to study
small cancellation quotients 
$$G \to G/\nh$$
of a relatively hyperbolic group $G$ over a hyperbolic element $h$, and to find an embedded set of elements in the
quotient $G/\nh$. 
The key tool here is Qualitative Greendlinger Lemma in \cite{DGO}.
However, this lemma lives in a cone-off space, and in order to use
it, we need a notion of lift paths to transfer back information to the
original space.
\subsection{Rotating family and hyperbolic cone-off}

Assume that $(\dot X, \dot d)$ is a $\delta$-hyperbolic space, on which
$G$ acts (usually non-properly) on $\dot X$ by isometries.
\begin{defn}
Let $A$ be a $G$-invariant set in $\dot X$ and $\{G_a: a \in A\}$ a
collection of subgroups in $G$ such that $G_a(a)=a$ and $gG_ag^{-1}
= G_{ga}$ for any $a \in A$. We call the pair $\mathcal A=(A,
(G_a)_{a \in A})$ \textit{a rotating family}.
\end{defn}

We are particularly interested in a \textit{very rotating} family: informally speaking, this requires $A$ to be
sufficiently separated and each $G_a$ rotates at $a$ with a very
``large" angle. See  \cite{DGO} and \cite{Coulon2} for a precise definition. The
following result in \cite[Lemma 5.16]{DGO} is a qualitative version of Greendlinger Lemma. 

\begin{lem}[Qualitative Greendlinger Lemma]\label{greendlinger}
Let $\dot X$ be a $\delta$-hyperbolic geodesic metric space with a
very rotating family $\mathcal A=(A, (G_a)_{a\in A})$. Let $g \in
\langle \cup_{a \in A} G_a\rangle \setminus 1< G$. Then for any $o \in
\dot X \setminus N_{20\delta}(A)$, there exists $a \in A \cap [o,
go]$ and $h \in G_a, q_1, q_2 \in [o, go]$ satisfying $40\delta
<\dot d(q_1, q_2) <50\delta$ and $\dot d(q_1, hq_2) < 8\delta$.
\end{lem}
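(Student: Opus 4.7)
This statement is Lemma~5.16 of \cite{DGO}, so my plan is to outline the strategy of proof used there. The natural approach is induction on the \emph{rotation length} $n(g)$, defined as the minimal integer $n$ such that $g = h_1 h_2 \cdots h_n$ with $h_i \in G_{a_i} \setminus\{1\}$ and $a_{i+1} \neq a_i$. The very rotating hypothesis should be understood quantitatively as follows: for any $a \in A$, any $h \in G_a \setminus\{1\}$, and any $x, y \in \dot X$ with $\dot d(x, a), \dot d(a, y) \geq 20\delta$, the Gromov product $(x \mid h y)_a$ is at most, say, $\delta$. Geometrically, this means $h$ bends geodesics at $a$ by essentially a straight angle.

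In the base case $n(g) = 1$, we have $g \in G_a \setminus\{1\}$ for a single apex $a$. Since $g$ fixes $a$, the triangle $(o, a, go)$ satisfies $\dot d(o, a) = \dot d(a, go)$, and the very rotating condition forces $(o \mid go)_a \leq \delta$. Combined with the hypothesis $\dot d(o, A) > 20\delta$, this places $a$ within $O(\delta)$ of the midpoint of $[o, go]$, and the points $q_1, q_2$ demanded by the conclusion can be chosen on $[o, go]$ symmetrically about $a$ at distance $\approx 22\delta$ from $a$. The inequality $\dot d(q_1, g q_2) < 8\delta$ follows from another application of the $\delta$-thin triangle property and the large angle at $a$.

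For the inductive step, decompose $g = h_1 g'$ with $h_1 \in G_{a_1}$ and $n(g') = n(g) - 1$. Consider the broken path $P$ obtained by concatenating $[o, a_1]$ with (an inductive refinement of) $[a_1, g o]$, passing through each rotation apex $h_1 \cdots h_{i-1} a_i$ in order. The very rotating property ensures that at every apex the two incident segments meet at essentially a straight angle, so $P$ is a local $(1, C\delta)$-quasi-geodesic for a controlled constant $C$. By Gromov's local-to-global stability of quasi-geodesics in hyperbolic spaces, $P$ is within bounded Hausdorff distance of any geodesic $[o, go]$; in particular, at least one apex appearing in $P$ must lie $O(\delta)$-close to $[o, go]$. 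After a small adjustment (moving from that nearby apex to the genuine nearest point on $[o, go]$), we obtain the desired $a \in A \cap [o, go]$, and the corresponding rotation factor $h_k$ in the decomposition produces $h$, $q_1$, $q_2$ exactly as in the base case, applied locally around $a$.

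The main obstacle is making the quasi-geodesicity of the broken path $P$ quantitative enough to yield the specific constants $40\delta, 50\delta, 8\delta$ in the statement. This is where the \emph{windmill} construction of \cite[Section 5]{DGO} is essential: one builds a $G$-invariant quasi-convex set $W \subset \dot X$ recursively by adjoining rotated copies of a base set through the subgroups $G_a$, and simultaneously proves by induction on the windmill level that any element $g$ expressible as a product of rotations preserves $W$ and that geodesics between $W$-points factor through the apex structure with the required sharpness. The hypothesis $o \notin N_{20\delta}(A)$ is precisely what guarantees enough clearance around each apex to execute the rotation argument uniformly.
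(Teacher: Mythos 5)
This lemma is not proved in the paper at all; it is imported verbatim from \cite[Lemma 5.16]{DGO} as a black box, and the paper's entire ``proof'' is the citation. So there is no argument in the paper against which to check your sketch.

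That said, your outline does trace the broad strategy of the argument in \cite{DGO}: the quantitative very-rotating hypothesis, the base case with a single rotation forcing the geodesic $[o,go]$ to bend sharply through the apex, the broken-path local quasi-geodesic for higher rotation length, and the windmill machinery to make the constants work. Two points are worth flagging. First, your paraphrase of ``very rotating'' via the Gromov product $(x\mid hy)_a\le\delta$ for $\dot d(x,a),\dot d(a,y)\ge 20\delta$ is only a heuristic; the actual definition in \cite{DGO} is a condition asking that certain geodesics between points in a fixed annulus around $a$ pass \emph{through} $a$ itself, and the two are not interchangeable without care because they control slightly different configurations. Second, the organization in \cite{DGO} is inverted relative to yours: the windmill $W$ is built first, with the contraction/quasi-convexity properties proved by induction on windmill level, and the Greendlinger conclusion is extracted as a consequence of those invariants rather than being run as a standalone induction on rotation length with the windmill patched in afterwards to fix the constants. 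Your version would work conceptually but would need to reprove the quantitative control that the windmill invariants already package. Since the host paper is only using the lemma as a cited tool, a literal citation to \cite{DGO} would have been the faithful ``proof'' here.
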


Let $(Y, d_Y)$ be a geodesic metric space. For a fixed $r_0 \ge 0$, define $\mathcal C(Y)$ to be the quotient of $Y \times [0, r_0]$ by collapsing $Y\times
{0}$ to a point. The \textit{apex} $a(Y)$ and \textit{base} are the
images of $Y \times 0$ and $Y\times r_0$  in $\mathcal C(Y)$
respectively. If a group $G$ acts on $Y$ by isometries, then $G$
acts naturally on $Y \times [0, r_0]$ by $g(y, r) = (gy, r)$. This
action descends to $\mathcal C(Y)$ with a fixed point $a(Y)$.


We endow $\mathcal C(Y)$ with a geodesic metric denoted by
$d_{\mathcal Y}$, whose precise definition is not relevant here but has the
following consequence.
\begin{lem}\label{conemetric}\cite[Part I, Ch. 5]{BriHae}
Let $(Y, d)$ be a metric space and $r_0 >0$. Then the following
hold.
\begin{enumerate}
\item
$d_{\mathcal Y}((y, r), (y', r')) \le 2 r_0$ for any two $(y, r)$ and
$(y', r')$ in $\mathcal C(Y)$.
\item
A geodesic between $(y, r_0)$ and $(y', r_0)$ passes the apex in
$\mathcal C(Y)$ if and only if $d(y, y') \ge \pi \sinh r_0$.  
\end{enumerate}
\end{lem}


A major source of rotating families are provided by a so-called \textit{cone-off} construction over a space relative to subspaces.  Assume that $G$ acts on a $\delta$-hyperbolic space $(X, d)$ with a $G$-invariant $10\delta$-quasi-convex subspaces $\mathbb Q$.  The \textit{cone-off} $\dot X(\mathbb Q)$ over $X$
relative to $\mathbb Q$ is the quotient of the disjoint union $$X
\sqcup (\sqcup_{Q \in \mathbb Q}\; \mathcal C(Q))$$ by
identifying the base of $\mathcal C(Q)$ with $Q$ in $X$.

By exhausting the space $X$ by $G$-orbits, we can use Zorn Lemma to construct a nerve graph quasi-isometric to $X$ on which $G$ acts. Note that the results proven in this section is stable up to such a quasi-isometry. So we assume that $X$ is a graph for simplicity. In particular, $\dot X$ with the natural length metric $\dot d$ is a geodesic space.

If $\mathbb Q$ has the bounded intersection property, then
$\dot X$ is a hyperbolic space, cf. \cite[Thm 3.5.2]{Coulon2}. Denote by $A(\mathbb Q) = \{a(Q): Q \in \mathbb Q\}$ the set of
apices. The stabilizers of each $Q \in \mathbb Q$ together
$A(\mathbb Q)$ gives rise to a rotating family. 

In what follows, we consider
a particular example arising from a hyperbolic element $h \in G$, when $G$ acts properly on $X$.

The \textit{axe} $\ax(h)$ of $h$ is the $2\delta$-neighborhood of
the union of all geodesics between two fixed points  of $h$ on
$\pX$. Then $\ax(h)$ is $2\delta$-quasi-convex in $X$ so that we can form a contracting system as follows
\begin{equation}\label{Qh}
\mathbb Q = \{g \ax(h): g \in G\}.
\end{equation} It is well-known that $\langle h \rangle$ is of finite index in the group
\begin{equation}\label{Eh}
E(h) = \{g \in G: \exists n \in \mathbb Z\setminus \{0\}, g h^n g^{-1} = h^{\pm n}
\}.
\end{equation}
which admits a proper and
co-compact action on $\ax(h)$.
 Denote 
\begin{equation}\label{EE}
\mathbb E =\{g E(h): g\in G\}
\end{equation}
As discussed in Example
\ref{contractingsystem}, $\mathbb U \cup \mathbb Q$ and $\mathbb P \cup \mathbb E$ are both  contracting systems with the bounded intersection property.


\begin{lem}\label{rotatinghypelem}\cite[Proposition 6.23]{DGO}
There exists universal real numbers $\delta_0>0, r_0 >20\delta_0$
such that for any hyperbolic element $h$ in $G$, there exist
$k=k(h), \iota=\iota(h)>0$ with the following property.

Consider the cone-off $\dot{X_\iota}(\mathbb Q)$ over the
scaled metric space $X_\iota = (X, \iota \cdot d)$. For any $n\ge 1$, let 
\begin{equation}\label{Hn}
\mathcal H_n = \{g \langle h^{nk} \rangle g^{-1}:  g \in G \}.
\end{equation}
Then $(A(\mathbb Q), \mathcal H_{n})$ is a very rotating family
on the $\delta_0$-hyperbolic space  $\dot{X_\iota}(\mathbb Q)$.
\end{lem}

If $G$ is a relatively hyperbolic group, then the quotient
$\bar G = G/\llangle h^{nk}\rrangle$ is relatively hyperbolic for any
$n \ge 1$. See \cite{DGO} for more detail.

\subsection{Lift paths in a cone-off space}
Let $(X, d)$ be a hyperbolic space and $\mathbb Q$ a locally finite collection of uniformly quasi-convex subspaces with the bounded intersection. We consider another version of a lift path in the cone-off space $\dot X(\mathbb Q)$,
analogous to that of a lift path in Section \ref{Section6}. The idea is, similarly, to replace each part of $\gamma$ in the cone $\mathcal C(Q)$ of $Q\in \mathbb Q$ by a geodesic in $X$.

\begin{defnconst}[Lift path in $\dot X$]
Let $\gamma$ be a geodesic in $\dot X(Q)$ with both endpoints in $X$. The
\textit{lift path} of $\gamma$ is defined as follows.

Assume that $\gamma\cap \mathcal C(Q)  \neq \emptyset$ for some 
$Q \in \mathbb Q$, and let $a_-, a_+ \in Q$ be the corresponding entry and
exit points of $\gamma$ in $\mathcal C(Q)$.  Note that $\mathcal C(Q)\cap \mathcal C(Q')=Q\cap Q'$ for distinct $Q, Q' \in \mathbb Q$. Thus, $[a_-, a_+]_\gamma$ and $[a'_-,
a'_+]_\gamma$ intersect, if at all, only in their endpoint.  We replace
$[a_-, a_+]_\gamma$ by some geodesic in $X$ with same endpoints.

Repeating the above replacement for all such $Q\in \mathbb Q$ gives the lift path
$\hat \gamma$ in $X$.
\end{defnconst}

We have an analogue to Lemma \ref{lift} by the same proof.
\begin{lem}\label{liftconeoff}
There exist $\lambda=\lambda(r_0), c=c(r_0)\ge 0$ such that  for any geodesic $\gamma$  in $\dot X(\mathbb Q)$ with both endpoints in $X$,  the lift $\hat \gamma$  is a $(\lambda,
c)$-quasi-geodesic in $X$.
\end{lem}

The following technical lemma will be used in Lemma \ref{kerelem} .
\begin{lem}\label{liftconeoff2}
There exist $\epsilon, R, L, D \ge 0$ depending on $r_0$ with the
following property.

Let $\gamma$ be a geodesic in $\dot X(\mathbb Q)$ with
$\gamma_-, \gamma_+\in X$. Consider $Q \in \mathbb Q$ such that $Q \cap
\gamma \neq \emptyset$ with the entry and exit
points $a_-, a_+$   in $\mathcal C(Q)$ respectively.  Assume
that $d(a_-, a_+) > L$. Then there exist $(\epsilon, R)$-transition
points $x, y$ in $\beta=[\gamma_-, \gamma_+]$ in $X$ such that $d(x,
a_-) < D$ and $d(y, a_+) < D$.
\end{lem}
\begin{proof}
Let $\lambda, c$ be given by Lemma \ref{liftconeoff} so that  $\hat
\gamma$ is a $(\lambda, c)$-quasi-geodesic.   By the stability of quasi-geodesics, there exists $\epsilon>0$ such that $\hat \gamma \subset N_\epsilon(\beta)$.

Let $Q \in \mathbb Q$ be given in the lemma.  Assume that $L
>\pi\sinh r_0$ and then $a(Q) \in \gamma$. Let $z, w$ be the corresponding  entry and exit
points of $\hat \gamma$ in the $\epsilon$-neighborhood $N_\epsilon(Q)$ of $Q$ in $X$.  Project $z$ to $z'\in Q$ so that  $d(z, z') \le \epsilon$. By Lemma \ref{conemetric}, $\dot d(z', a_-) \le  2r_0$ and then $\dot d(z, a_-) \le \epsilon+2 r_0$. 

Consider  the geodesic $p:=[z, a_-]_\gamma$ between $z, a_-$ in $\dot X$.  Note that  if a geodesic in $\dot X$ with endpoints in $X$ passes an apex, then its $\dot d$-length is of at least $2r_0$. Moreover, since $\mathbb Q$ is locally finite in $(X, d)$, it follows that there are at most finitely many, say $N\ge 1$,   $\{Q_1, \cdots, Q_N\}\subset \mathbb Q$ intersecting $p$. 

Let $D_0$ be given by Lemma \ref{contracting}.(3), for which we also assume that $\mathbb Q$ has $D_0$-bounded projection. The parts of $p$ inside $X$ has $d$-length at most $\epsilon+2 r_0$. Consider the entry and exit points $(Q_i)_-, (Q_i)_+$ of $p$ in $Q_i$ ($1\le i\le N$). We  project to $Q_i$ the other $Q_j$, $Q$, the geodesic $[z, z']$  and parts of $p$ inside $X$: 
$$
\begin{array}{lll}
d((Q_i)_-, (Q_i)_+) &\le \sum_{j\ne i} \proj_{Q_i}(Q_j) +\proj_{Q_i}(Q) + \proj_{Q_i}([z, z'])+\proj_{Q_i}(p\cap X)\\
&\le (N+2)\cdot D_0 + 2\epsilon+2 r_0 
\end{array}
$$
which follows by the $D_0$-bounded projection and Lemma \ref{contracting}.(3). In total, 
\begin{equation}\label{EQza}
d(z, a_-) \le \sum_{1\le i\le N} d((Q_i)_-, (Q_i)_+) + \epsilon+2 r_0 \le (N+1)((N+2)\cdot D_0 + 2\epsilon+2 r_0).
\end{equation} Similarly, the same upper bound holds for $d(w, a_+)$.

\begin{figure}[htb]
\centering \scalebox{0.5}{
\includegraphics{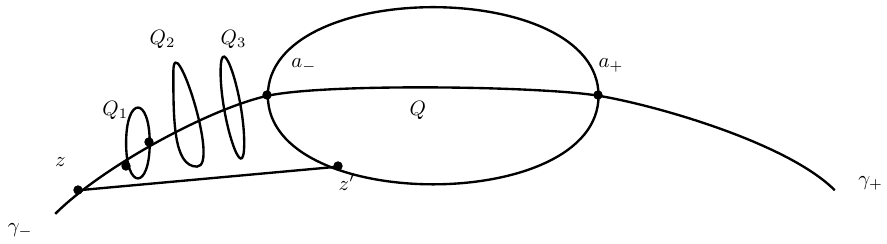} 
} \caption{Lemma \ref{reduelem}} \label{Figure4}
\end{figure}

Let $x, y$ be the entry and exit points of $\beta$ respectively in
$N_\epsilon(Q)$. Since $Q$ is quasi-convex, we assume that $[x,y]_\beta \subset N_\epsilon(Q)$. Assume, in addition, that $\mathbb Q$ is $(\epsilon, D_0)$-contracting for $(\lambda, c)$-quasi-geodesics.   We then project the parts of $\gamma$ and $\beta$ outside $N_\epsilon(Q)$ to see that $d(z, x), d(w, y) <
2D_0+2\epsilon$. By (\ref{EQza}), we have  
\begin{equation}\label{EQaxy}
d(a_-, x), d(a_+, y) <  D,
\end{equation}
where
$D:=(N+2)((N+2)\cdot D_0 + 2\epsilon+2 r_0)$.

Let $R=\sup_{Q\in \mathbb Q, U\in \mathbb U} \{\diam{N_\epsilon(Q)
\cap N_\epsilon(U)} \} < \infty$. It suffices to show that $x$ is an
$(\epsilon, R)$-transition point in $\gamma$.

Suppose, to the contrary, that $x$ is $(\epsilon, R+1)$-deep in a horoball
$U \in \mathbb U$. We can assume that $L > 2D + R+1$ so that $d(x,
y) >R$ by (\ref{EQaxy}). Since $[x, y]_\beta \subset N_\epsilon(Q)$,  we have $N_\epsilon(Q) \cap
N_\epsilon(U)$ has diameter at least $R$. This contradicts to the choice of $R$. Therefore,
$x$ is an $(\epsilon, R)$-transition point in $\beta$. The same holds for $y$.
\end{proof}

\subsection{Finding embedded subsets in quotients}

We shall construct a subset of $G$ which injects into $G/\nkh$ for
$n\gg 0$. This will be explained by a series of lemmas.

Let $\mathbb Q,\; \mathbb E,\;\mathcal H_n$ be in (\ref{Qh}), (\ref{EE}), (\ref{Hn}) respectively. By Lemma \ref{rotatinghypelem}, for any $n >1$, $(A(\mathbb Q),
\mathcal H_n)$ is a very rotating family on $\dot{X}_\iota(\mathbb
Q)$. In what follows, we omit the scaling factor $\iota$ for notational simplicity and denote $\dot{X}=\dot{X}_\iota(\mathbb
Q)$, as the
reasoning for $(X, d)$ is valid for $(X, \iota \cdot d)$ upon a scaling
of the metric $d$.

Fix $o \in X \setminus (\mathcal U \cup \mathcal Q)$, where
$\mathcal Q=\cup_{Q\in \mathbb Q} Q$. We first prove that kernel
elements shall quasi-contain some high power of the
relator $h^{k}$, where $k=k(h)$ is given by Lemma \ref{rotatinghypelem}.

\begin{lem}\label{kerelem}
There exist constants $\epsilon =\epsilon(h), k =k(h)>0$ and a function $L=L(h, n) >0$ for $n >1$ with $L(h, n)\to \infty$ as $n\to \infty$ such that the
following hold for any
$g\in \llangle h^{nk}\rrangle\setminus 1$:  
\begin{enumerate}
\item
any geodesic $\gamma=[o, go]$ in $X$ contains an $(\epsilon, L)$-deep point in some $Q\in \mathbb Q$.

\item
any geodesic $\gamma=[1, g]$ in $\Gx$ contains an $(\epsilon, L)$-deep point in some $E\in \mathbb E$.
\end{enumerate}

\end{lem}
\begin{proof}
For a subgroup $H$ in $G$, consider two functions $$\inj_G(H) := \inf\{d(h, 1):  
h \in H \}, \; \inj_X(H) := \inf\{d(ho, o):  
h \in H \}.$$


\textbf{(1)}. Let $\beta=[o, go]$ be a geodesic in $(\dot X, \dot
d)$. By Lemma \ref{greendlinger}, there exists $a=a(Q) \in A \cap
\beta$ such that there exists $f' \in G_a\setminus \{1\}, q_1, q_2 \in \beta$ with
$40\delta_0 <\dot d(q_1, q_2) <50\delta_0$ and $\dot d(q_1, f'q_2) <
8\delta_0$. For definiteness, assume that $G_a=g_a \langle
h^{n\cdot k}\rangle g_a^{-1}$  for some $g_a\in G$ and write
\begin{equation}\label{fexpression}
f' =g_a  h^{n\cdot k\cdot i} g_a^{-1}
\end{equation}
for some $i \in \mathbb Z$.

Let $q'_1, q'_2$ be the radical projections of $q_1, q_2$ to the
base of $\mathcal C(Q)$ respectively. Thus, $a(Q) \notin [q_1,
f'q_2]_\beta$. Indeed, if $a(Q) \in [q_1, f'q_2]_\beta$, then $40\delta_0 < \dot d(q_1, q_2)=\dot d(q_1, f'q_2) < 8\delta_0$, where the equality follows since $f'$ rotates around $a$. This is a contradiction.
Hence, $a(Q) \notin [q_1, fq_2]_\beta$ and then $a(Q) \notin [q'_1,
fq'_2]_\beta$.  By Lemma \ref{conemetric}, we have 
\begin{equation}\label{shortdist}
d(q'_1, f'q'_2)
< 2 r_0.\end{equation}


Note that $q'_1, q'_2$ are entry and exit points of
$\beta$ in $\mathcal C(Q)$ respectively, which lie in  the lift of $\beta$. By Lemma \ref{liftconeoff} the lift of $\beta$ is a quasi-geodesic so that by the stability of quasi-geodesics there exist $D_1>0$ and $x, y$ in $\gamma=[o, go]$ such that $d(q_1', x), d(q'_2, y) < D_1$. By  (\ref{shortdist})
we have
\begin{equation}\label{fexpression2}
d(f'q'_2, x),\;  d(q'_2, y) < 2 r_0 + D_1.
\end{equation}

Since $E(h)$ acts co-compactly on $Q=\ax(h)$,  there exist $M >0$ and
$j>0$ such that $d(g_ah^{j} o, q'_2) < M$.  Hence by
(\ref{fexpression}) and (\ref{fexpression2}),
\begin{equation}\label{fexpression3}
d(g_a h^j\cdot  h^{n\cdot k\cdot i} o, x), \;d(g_ah^{j} o, y) < \epsilon_1 :=2 r_0 + D_1 + M.
\end{equation}
Since $Q$ is quasi-convex, we can assume that $[x, y]_\gamma\subset N_{\epsilon_1}(g_aQ)$ by increasing $\epsilon_1$.  This implies that $$\diam{\gamma\cap N_{\epsilon_1}(g_aQ)} \ge d(h^{n\cdot k\cdot i}o, o)-2\epsilon_1\ge L_X(h,n):=\inj_X(\langle h^{n k}\rangle)-2\epsilon_1.$$  Then $\gamma$  contains an $(\epsilon_1, L)$-deep point in $g_a Q\in \mathbb Q$ for $L=L_X(h,n)$.

\textbf{(2)}. Since $d(q_1', x), d(q'_2, y) < D_1$, by   (\ref{fexpression3}), we have $$d(q'_1, q'_2)\ge d(h^{n \cdot k\cdot i} o,   o)- 2\epsilon_1-2D_1.$$ Let $\epsilon, R, L, D_2>0$ be given by Lemma \ref{liftconeoff2}. By taking a constant multiple, we choose $k=k(h)$ large enough such that $$d(h^{n \cdot k\cdot i} o,   o)> \epsilon_1+ D_1+L.$$ Then $d(q'_1, q'_2)\ge L$ so that we can apply Lemma \ref{liftconeoff2} to $q_1', q_2'$. There exist $(\epsilon, R)$-transition points $z, w\in \gamma$ such that $d(q_1', z), \;d(q_2', w)\le D_2$.

Let $\alpha=[1, g]$ be a word geodesic in $\Gx$. For a constant
$D_3=D(\epsilon, R)>0$ given by Lemma \ref{PairTransX},  there
exist $g_1, g_2\in \alpha$ such that $d(g_1 o, z), d(g_2 o, w) <
D_3$. It follows by (\ref{fexpression3}) that
\begin{equation}\label{fexpression4}
d(g_a h^j\cdot  h^{k\cdot i} o, g_1o), d(g_ah^{j} o, g_2o) < \epsilon_1 + D_1+D_2+D_3.
\end{equation}
Since $G$ acts properly on $X$, set $\epsilon_2=\max\{d_S(1, g): d(go, o)
< \epsilon_1+D_1+D_2+D_3\} < \infty$. 
This implies that $$\diam{[1, g]\cap N_{\epsilon_1}(g_aE(h))} \ge d(h^{n\cdot k\cdot i}, 1)-2\epsilon_2\ge L_G(h, n):= \inj_G(\langle h^{n\cdot k}\rangle)-2\epsilon_2.$$ Then $[1, g]$ contains an $(\epsilon_2, L)$-deep point in $g_aE(h)$ for $L=L_G(h,n)$. This concludes the proof.
\end{proof}

We are able to prove Proposition \ref{ThmBSharp} via the following useful corollary:
\begin{lem}\label{QuotientP}
For any $P\in \mathcal P$, we have that $P\cap \nkh$ is trivial for large enough $n\gg  1$. In particular, $\pi: G\to G/\nkh$ is injective  on $P$. 
\end{lem}
\begin{proof}
Let $\epsilon=\epsilon(h), k=k(h), L=L(h, n)$ be given by Lemma \ref{kerelem}. Let $1\ne g\in P\cap \nkh$. Up to a conjugation we assume that $P\in \mathcal {\bar P}$, where $\mathcal {\bar P}$ is a complete set of conjugacy representatives of $\mathcal P$. Recall that $\mathbb P=\{gP: P\in \mathcal {\bar P}\}$ and $\mathbb P\cup \mathbb E$ has bounded intersection. So there exists a constant $L_0$ depending on $\epsilon$ such that if a geodesic $\gamma=[1,g]$ contains an $(\epsilon, R)$-deep point in some $E\in \mathbb E$, then $R\le L_0$. By taking $n$ large enough such that $L(h,n)>L_0$ we get a contradiction with Lemma \ref{kerelem} (2).
\end{proof}

\begin{proof}[Proof of Proposition \ref{ThmBSharp}]
Since $G$ has no parabolic gap property, there exists $P\in \mathcal P$ such that $\g G=\g P$. By Lemma \ref{QuotientP},  the quotient map $P\to \bar P:=P/(P\cap \nkh)$ induced by $\pi: G\to G/\nkh$ is injective. Thus, $\g P=\g{\bar P}$ and so $\g{\bar P} \le \g {\bar G} \le \g G=\g P$: $\bar G$ has no parabolic gap property.
\end{proof}

Let $\Delta(v_0, v_1, v_2)$ denote (a choice of) a geodesic triangle
in $\Gx$ with vertices $v_i$. The following relative version of the thin-triangle
property in \cite{DruSapir} is useful in Lemma \ref{reduelem}.

\begin{lem}\label{relthintri}
There exists $\sigma>0$ such that for any geodesic triangle
$\Delta(v_0, v_1, v_2)$,  one of the following two cases holds
\begin{enumerate}
\item
there exists $v\in G$ such that $d(v, [v_i, v_{i+1}]) < \sigma$ for
$i\mod3$,
\item
there exists a unique $gP \in \mathbb P$ such that $d(gP, [v_i,
v_{i+1}]) < \sigma$ for $i \mod 3$.
\end{enumerate}
In the second case denote by $(v_i)_-, (v_i)_+$ the entry points of
$[v_i, v_{i-1}]$ and $[v_i, v_{i+1}]$ respectively in $N_\sigma(gP)$
for $i \mod 3$. Then $d((v_i)_-, (v_i)_+) < \sigma$.
\end{lem}

\begin{lem}\label{reduelem}
There exist $\epsilon =\epsilon(h), k =k(h)>0$ and a function $L=L(h, n) >0$ for $n >1$ with $L(h, n)\to \infty$ as $n\to \infty$ such that the
following holds. Assume that $g_2 = g_1 g$ for $g_1, g_2\in G, g \in \llangle h^{nk}
\rrangle$ for $n\gg 1$.  
\begin{enumerate}
\item
Either $[o, g_1o]$ or $[o, g_2o]$ contains an $(\epsilon, L)$-deep point in some $Q\in \mathbb Q$.
\item
Either $[1, g_1]$ or $[1, g_2]$ contains an $(\epsilon, L)$-deep point in some  $E\in \mathbb E$.
\end{enumerate}
\end{lem}
\begin{proof}
We only show the statement (2). The argument for (1) is simpler,
with Lemma \ref{relthintri} replaced by the thin-triangle property in hyperbolic spaces. 

Consider the geodesic triangle $\Delta(1, g_1, g_2)$ in $\Gx$ with
sides $\gamma_1=[1, g_1], \gamma=[g_1, g_2],
\gamma_2=[1, g_2]$, where $g_2=g_1g$.

Let $\epsilon, L_G(h, n)$ given by 
Lemma \ref{kerelem} so that $\gamma$ contains an $(\epsilon, L_G(h, n))$-deep point in some $E(h)$-coset $E\in \mathbb E$.  So, $\diam{\gamma \cap N_{\epsilon} (E)} >  L_G(h, n)$. Let $x, y$ be the entry and exit points of $\gamma$ in $
N_{\epsilon}(E)$ respectively. Then
\begin{equation}\label{dxy}
d(x, y) > L_G(h,n)-2\epsilon.
\end{equation}
Let $\sigma>\epsilon$ be the constant given by Lemma \ref{relthintri} so that we have the
following two cases.

\textbf{Case 1}. There exists $z \in \gamma$ such that $d(z,
\gamma_i) < 2\sigma$ for $i=1, 2$. Let $z' \in \gamma_1$ so that $d(z, z') < 2\sigma$. If   $z
\in [x, y]_\gamma$, we assume by (\ref{dxy}) that
\begin{equation}\label{dzx}
d(z, x)>L_G(h,n)/2-\epsilon,
\end{equation} for concreteness. If $z \notin [x, y]_\gamma$,  assume for definiteness that $z\in [y, \gamma_+]_\gamma$. In both cases we claim that $[g_1, z']_{\gamma_1}\cap N_\epsilon(E)\ne \emptyset$. We verify  the claim only for the case $z\in [y, \gamma_+]_\gamma$; the other case is similar.   Without loss of generality we assume that $\mathbb P \cup \mathbb E$ is an
$(\epsilon, D)$-contracting system for the same $\epsilon$ and a constant $D$ also satisfying Lemma \ref{contracting}.(3). If $[g_1, z']_{\gamma_1}\cap N_\epsilon(E)= \emptyset$,  we get by projection that 
$$
\begin{array}{lll}
d(x, y) &\le \diam{\proj_{E}([g_1, z']_{\gamma_1})}+\diam{\proj_{E}([z',z])} + \diam{\proj_{E}([y, z]_\gamma)}+\diam{\proj_{E}([x, g_1]_\gamma)}\\
&\le 4D+2\sigma,
\end{array}
$$ where $[z',z]$ is of length at most $2\sigma$.  Choose $n>1$ such that $L_G(h,n) \ge 4D+2\sigma+2\epsilon$. We then get a contradiction with (\ref{dxy}). See Figure \ref{Figure2}.

\begin{figure}[htb]
\centering \scalebox{0.5}{
\includegraphics{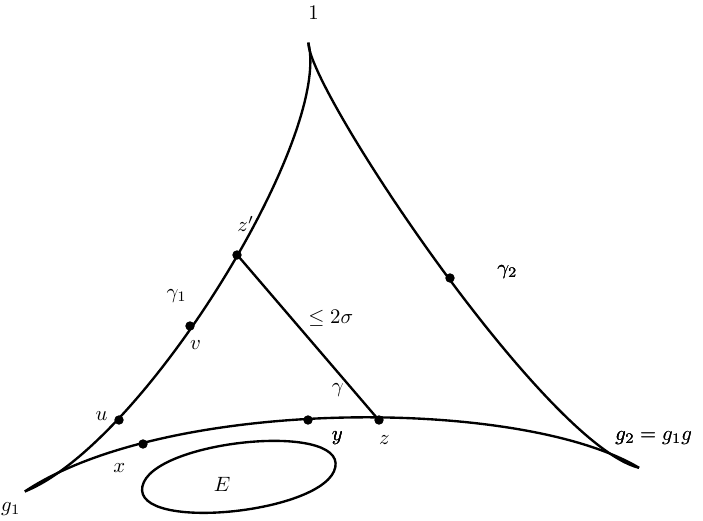} 
} \caption{Lemma \ref{reduelem}} \label{Figure2}
\end{figure}

Choose $u, v\in \gamma_1$ the respective entry and exit points of $[g_1, z']_{\gamma_1}$ in $N_{\epsilon}(E)$. We project the parts of $[g_1, z']_{\gamma_1}$ and $[g_1, z]_\gamma$ outside $N_{\epsilon}(E)$ to $E$. By the contracting property of $E$ we have $d(x, u), d(v, y)\le 3D+2\sigma$. We get by (\ref{dxy}) and (\ref{dzx}): 
$$\begin{array}{lll}
\diam{\gamma_1
\cap N_{\epsilon}(E)} &\ge d(u, v)>d(x, y) -2(3D+2\sigma) \\
&> L(h,n):=L_G(h, n)/2-2(3D+2\sigma).
\end{array}$$ Since $E$ is quasi-convex, we can assume that $[u, v]_{\gamma_1}\subset N_{\epsilon_1}(E)$ for the same $\epsilon$. Then $\gamma_1$ contains an $(\epsilon, L)$-deep point in $E$ for $L=L(h,n)$.

\textbf{Case 2}. Let $gP \in \mathbb P$ be given by Lemma
\ref{relthintri}, and $u, v$ the entry and exit points of $\gamma$
in $N_\sigma(gP)$ respectively so that $d(u,
\gamma_1), d(v, \gamma_2) < \sigma$.

As $\mathbb P \cup \mathbb E$ has the bounded intersection property, there exists
a constant $C=C(\epsilon, \sigma)>0$ such that $\diam{[x,
y]_\gamma \cap [u, v]_\gamma} < C$. Without loss of generality,
assume that $d(y, u) < C$, with the case that $d(x, v) < C$ being symmetric.

By a similar argument as above in Case 1, we can find a function $L(h, n)$ such that $\gamma_1$ contains an $(\epsilon, L)$-deep point in some left $E(h)$-coset in $\mathbb E$.
\end{proof}

For $\epsilon, L \ge 0$, denote by
$\E$ the set of elements $g$ in $G$ such that $g$ contains an $(\epsilon, L)$-deep point in some $E\in \mathbb E$. Similarly,  denote by $\Q$ the set of $go$ in $X$ such that some $[o, go]$ contains an
$(\epsilon, L)$-deep point in some $Q\in \mathbb Q$.

We are now in a position to state the main result of this subsection.

\begin{prop}\label{injective}
Let $\epsilon =\epsilon(h),k =k(h)$ and $L=L(h, n)$ for $n\ge 1$ be as in Lemma
\ref{reduelem}. Then
\begin{enumerate}
\item
The map $\pi: G \to G/\nkh$ is injective on $\GE$.
\item
The map $\pi: X \to X/\nkh$ is injective on $\GQ$.
\end{enumerate}
\end{prop}
\begin{proof}
We prove the statement \textbf{(2)}.  Let $g_1o, g_2o \in \GQ$
such that $\pi(g_1o)=\pi(g_2o)$. Then $\nkh g_1o = \nkh g_2 o$ and so there exists $g \in \llangle h^{kn} \rrangle$ such that $g_1 o =
g_2 go$. By Lemma \ref{reduelem}, 
at least one of $[o, g_1o], [o, g_2o]$ contains an $(\epsilon, L)$-deep point in some $Q\in \mathbb Q$. This contradicts to the choice of $g_1o, g_2o \in
\GQ$.
\end{proof}

\section{Proofs of Theorems \ref{ThmA} \& \ref{ThmB}}\label{Section9}
We are ready to give proofs of main theorems. Let $\GP,\; X, \;\mathbb U, \;\mathcal U$ be as in Definition
\ref{RHdefn}. Fix a basepoint $o \in X \setminus \mathcal U$. Let
$h$ be a hyperbolic element with the corresponding $\mathbb Q$ and $\mathbb E$  defined in (\ref{Qh}) (\ref{EE}). We have that $\mathbb P \cup  \mathbb
E$ is a contracting system with the bounded intersection property in $\Gx$, and $\mathbb U \cup \mathbb Q$ is so in $X$.  Denote by $\epsilon_0>0$ the constant given by Lemma \ref{uniformtrans} for both  
contracting systems $\mathbb P \cup \mathbb E$ and $\mathbb U \cup \mathbb Q$.

\subsection{Proof of Theorem \ref{ThmA}}
Fix $0<\sigma<\g G$.  We consider transition points relative to $\mathbb P \cup
\mathbb E$. Let $\epsilon>\epsilon_0, r, R$ be given by Theorem \ref{LargeTreeG}. There exists a geodesic
tree $\mathcal T$ with $\g{\mathcal T}
> \sigma$ and for any geodesic $\gamma$ in $\mathcal T$ and $x \in \gamma$, there exists an $(\epsilon,
R)$-transition point $y \in \gamma$ such that $d(x, y) < r$.

Let $\epsilon'=\epsilon'(h), k=k(h), L(h, n)$ be given by Lemma \ref{injective}.  Choose $n$ large enough such that $$L:=L(h, n) >2(L(r+R+1)+\epsilon'),$$ where $L(r+R+1)$ is given by Lemma
\ref{uniformtrans}.  Then $\pi:
G\to G/\nkh$ is injective in $\GEE$.  

We claim
that $\mathcal T^0 \subset \GEE$. Suppose, to the contrary, that there exists $g \in \mathcal T^0 \cap \EE$.  Since $[1,g]$ contains an $(\epsilon', L)$-deep point in $E\in \mathbb E$, by Lemma \ref{uniformtrans}, $[1,g]$ contains an
 $(\epsilon_0, r+R+1)$-deep point $v$ in $E$. Thus, any point in $B(v, r)\cap [1,g]$ is $(\epsilon_0, R)$-deep in $E$. This contradicts to the  property of $\mathcal T$ stated above. Hence, $\mathcal T^0 \subset \GEE$.

As $\pi$ is length-decreasing, it follows that $\g {\mathcal T} \le
\g{\GEE}$. The proof
is complete.

\subsection{Proof of Theorem \ref{ThmB}}
The proof is analogous to the previous one.

Recall that
$\bar G = G /\nkh$ acts naturally on $\bar X= X/\nkh$ by $\bar g \cdot
\bar x= \bar{gx}$. Equip $\bar X$ with the metric $\bar d(\bar x, \bar
y) = d(\nkh x, \nkh y)$ so that $\bar G$ acts isometrically and properly
on $\bar X$.

Fix $0<\sigma<\g G$. We consider the notion of transition points relative to $\mathbb U
\cup \mathbb Q$. Let $\epsilon>\epsilon_0, r, R>0$ be given by Theorem \ref{LargeTreeX}.   There exists a geodesic
tree $\mathcal T$ in $\Gx$ such that $\g{\mathcal To}
> \sigma$ and the following property holds. Consider a geodesic $\gamma=[o,go]$ in $X$ for $g \in \mathcal T^0$. Then for any $x \in
\gamma$, there exists an $(\epsilon, R)$-transition point $y \in
\gamma$ such that $d(x, y) < r$.

Let $\epsilon'=\epsilon'(h), k=k(h), L(h,n)$ be given by Lemma \ref{injective}.
Choose $n$ large enough such that $L:=L(h, n) >2(L(r+R+1)+\epsilon')$. Then $\pi:
X\to \bar X$ is injective in $\GQQ$. As in the proof
of Theorem \ref{ThmA}, we obtain that $\mathcal T^0 o \subset \GQQ$.

As $\pi$ is length-decreasing, it follows that $\g {\mathcal To} \le
\g{\GQQ}$. Hence, $\g{\bar G} \ge \g {\mathcal To} \ge \sigma$. 

We now prove that $\bar G$ has parabolic gap property for $n\gg 0$. Indeed,  $P$ is sent by the map $\pi$ to $\bar P:=P/(\nkh\cap P)$. By Lemma \ref{QuotientP} we have $\bar P=P$ and $\g P=\g {\bar P}$. So for $n\gg 1$ we have $\g {\bar G} > \g P=\g{\bar P}$ and thus $\bar G$ has PGP property. Theorem
\ref{ThmB} is proved.

\appendix
\section{Uniqueness and ergodicity of PS-measures}\label{appendixA}
This appendix is devoted to the proof for the uniqueness and
ergodicity of PS-measures (in fact, for any quasi-conformal density
without atoms). The results proven here will not be used in
this paper.

Let $B$ be a metric ball of radius $rad(B)$ in a metric space $X$.
For $t >0$ define $tB$ to be the union of all balls of radius
$t\cdot rad(B)$ intersecting $B$. Let's recall the
following covering result.
\begin{lem}\label{covering}\cite[Theorem 2.1]{Matt}
Let $X$ be a proper metric space and $\mathcal B$ a family of balls
in $X$ with uniformly bounded radii. Then there exists a subfamily
$\mathcal B' \subset \mathcal B$ of pairwise disjoint balls such
that the following holds
$$
\bigcup\limits_{B \in \mathcal B} B \subset \bigcup\limits_{B \in
\mathcal B'} 5B.
$$
\end{lem}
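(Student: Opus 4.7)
The plan is to carry out the standard Vitali-type greedy construction, adapted to the paper's slightly unusual definition of $tB$. Let $M$ be a uniform upper bound on the radii of balls in $\mathcal{B}$, and stratify
$$\mathcal{B} = \bigsqcup_{n \ge 1} \mathcal{F}_n, \qquad \mathcal{F}_n := \{B \in \mathcal{B} : M/2^n < \mathrm{rad}(B) \le M/2^{n-1}\}.$$
I would then select disjoint subfamilies $\mathcal{B}_n' \subset \mathcal{F}_n$ inductively: at step $n$, apply Zorn's lemma to choose $\mathcal{B}_n'$ maximal among disjoint subfamilies of $\mathcal{F}_n$ whose members are additionally disjoint from every ball already chosen in $\mathcal{B}_1' \cup \dots \cup \mathcal{B}_{n-1}'$. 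Finally set $\mathcal{B}' := \bigcup_{n \ge 1} \mathcal{B}_n'$, which is disjoint by construction.

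Next I would verify the covering property. Fix $B \in \mathcal{B}$, say $B \in \mathcal{F}_n$. By maximality of $\mathcal{B}_n'$ (relative to the earlier levels), $B$ must meet some $B' \in \mathcal{B}_k'$ with $k \le n$; otherwise $B$ could be adjoined to $\mathcal{B}_n'$, contradicting maximality. Since $B' \in \mathcal{F}_k$ with $k \le n$, we have $\mathrm{rad}(B') > M/2^k \ge M/2^n \ge \mathrm{rad}(B)/2$, so $\mathrm{rad}(B) \le 2\,\mathrm{rad}(B')$. Pick $y \in B \cap B'$ and consider the ball $B'' := B(y, 5\,\mathrm{rad}(B'))$. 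For any $x \in B$, the triangle inequality gives $d(x,y) \le 2\,\mathrm{rad}(B) \le 4\,\mathrm{rad}(B') \le 5\,\mathrm{rad}(B')$, so $B \subset B''$. Moreover $B''$ is a ball of radius $5\,\mathrm{rad}(B')$ meeting $B'$ (at $y$), so by the paper's definition $B'' \subset 5B'$, and hence $B \subset 5B'$ as required.

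The main obstacle, and indeed the only subtle point, is the stratification and matching with the paper's non-standard $tB$: here $5B'$ is the union of \emph{all} balls of radius $5\,\mathrm{rad}(B')$ that hit $B'$, not the concentric ball, so one must carefully exhibit the auxiliary ball $B''$ rather than simply quote the inclusion into a concentric enlargement. Apart from that, the proof is a routine greedy-selection argument; properness of $X$ (though not strictly needed at this abstract level) ensures that Zorn's lemma can be replaced by an explicit countable recursion if one prefers, since each $\mathcal{F}_n$ contains at most countably many pairwise disjoint balls after fixing a countable dense subset.
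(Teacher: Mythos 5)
The paper gives no proof of this lemma: it is cited directly from Mattila \cite[Theorem 2.1]{Matt}. Your greedy dyadic-stratification argument is the standard proof of the $5r$-covering lemma, and your only real task --- handling the paper's non-concentric definition of $tB$ as the union of all radius-$t\cdot\mathrm{rad}(B)$ balls meeting $B$ --- is dealt with correctly by exhibiting the auxiliary ball $B''=B(y,5\,\mathrm{rad}(B'))$ and observing it both contains $B$ and meets $B'$. The argument is sound as written; no gaps.
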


Let $\mathcal B=\{B_{\rho_1}(\xi, r_i): \xi \in \cG\}$ be a
collection of balls centered at conical points $\xi$, with radii
$r_i$ provided by Lemma \ref{ombredisks}.  We consider a Hausdorff-Caratheodory measure $H_\sigma(, \mathcal
B)$ relative to $\mathcal B$ and the gauge function $r^\delta$. The
Hausdorff measure in the usual sense is to take $\mathcal B$ as the
set of all metric balls in $\pG$.

\begin{lem}\label{Unique}
Let $\{\mu_v\}_{v\in G}$ be a $\sigma$-dimensional quasi-conformal density without
atoms on $\pG$. Then we have
$$
H_{\sigma}(A, \mathcal B) \asymp \mu_1(A).
$$
for any subset $A \subset \pG$. In particular, $\mu$ is unique in
the following sense: if $\mu, \mu'$ are two such quasi-conformal density, then the
Radon-Nikodym derivative $d\mu/d\mu'$ is bounded from up and below.
\end{lem}
\begin{proof}
Let $A$ be any subset in $\pG$. Let $\mathbb B' \subset \mathbb B$
be an $\epsilon$-covering of $A$. Then $\mu_1(A) < \sum_{B \in
\mathcal B'} \mu_1(B)$. Let $\epsilon \to 0$. By Lemma
\ref{ombredisks}, we obtain that $\mu_1(A) \le C \cdot H_{\sigma}(A,
\mathcal B)$.

To establish the other inequality, we adapt an argument
of McMullen, cf. \cite[Remark 2.5.11]{Calegari}. Note that $\mu_1,
H_\sigma$ are Radon measures. Then for any $\tau
>0$ there exists a compact set $K$ and an open set $U$ such that $K
\subset A \subset U$ and $H_\sigma(U\setminus K) < \tau, \mu_1(U
\setminus K) < \tau$.

Set $\epsilon_0:=\rho_1(K, \pG \setminus U) >0$. For any $0 <
\epsilon < \epsilon_0$, let $\mathcal B_1 \subset \mathcal B$ be an
$\epsilon$-covering of $K$. Then there exists a sub-collection
$\mathcal B_2$ of $\mathcal B_1$ given by Lemma \ref{covering} such
that
$$
H_\sigma(K, \mathcal B)  <
\sum_{B \in \mathcal B_2} (5\cdot rad(B))^\sigma< C \cdot\mu_1(U),
$$
for some constant $C>0$ provided by Lemma \ref{ombredisks}. Letting
$\tau\to 0$ yields $ H_\sigma(A, \mathcal B) < C \mu_1(A)$.
\end{proof}

\begin{lem}\label{Ergodic}
Let $\{\mu_v\}_{v\in G}$ be a $\sigma$-dimensional quasi-conformal density without
atoms on $\pG$. Then $\{\mu_v\}_{v\in G}$ is ergodic with respect to the action of $G$ on $\pG$.
\end{lem}
\begin{proof}
Let $A$ be a $G$-invariant Borel subset in $\pG$ such that $\mu_1(A)
>0$. Restricting $\mu$ on $A$ gives rise to a $\g
G$-dimensional conformal density without atoms on $A$.

By Lemma \ref{Unique}, there exists $C>0$ such that for any subset
$X \subset \pG$, we have $\mu_1(X) < C \cdot H_\sigma(A \cap X, \mathcal
B)$.  Thus, $\mu_1(\pG \setminus A)=0$.
\end{proof}


By Proposition \ref{PSatom}, we have the following proposition.
\begin{prop}\label{PSMeasureG}
The PS-measures $\{\mu_v\}_{v \in G}$ is a $\g G$-dimensional quasi-conformal density
without atoms. Moreover, they are unique and ergodic.
\end{prop}

\section{Finiteness of partial cone types}
Recall that the
finiteness of cone types is established by Cannon in \cite{Cannon} for a hyperbolic
group. We shall state an analogous ``partial" result
in the relative setting.

For fixed $\epsilon, R>0$, we define two partial cones
$\Omega_{\epsilon, R}(g), \Omega_{\epsilon, R}(g')$ to be of
\textit{same type} if there exists $t \in G$ such that $t\cdot
\Omega_{\epsilon, R}(g)= \Omega_{\epsilon, R}(g')$.

\begin{lem}[Finiteness of Partial Cone Types]\label{FinPCones}
There exist $\epsilon, R_0>0$ such that for any $R > R_0$, there are
at most $N=N(\epsilon, R)$ types of all $(\epsilon, R)$-partial
cones $\{\Omega_{\epsilon, R}(g): \forall g \in G\}$.
\end{lem}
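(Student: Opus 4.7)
The plan is to adapt Cannon's proof of finiteness of cone types in hyperbolic groups, using the fact that the $(\epsilon, R)$-transition points occurring in partial cones give us local ``hyperbolic-like'' behaviour avoiding parabolic complications. The strategy is to find a finite-valued invariant $\theta(g)$ such that $\theta(g) = \theta(g')$ forces $t := g'g^{-1}$ to send $\Omega_{\epsilon, R}(g)$ to $\Omega_{\epsilon, R}(g')$, so that the number of partial cone types is bounded by the number of values of $\theta$.

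Let $D = D(\epsilon, R)$ come from Lemma \ref{PairTransG}. Choose $R_0$ large enough that $R > R_0$ implies $R \ge D$ (so the additive constant $D$ can be absorbed into the $2R$ slack in the definition of partial cones), and set $N = 4R + D + 2\epsilon$. For each $g \in G$ fix a geodesic $\omega_g = [1, g]$, and define $\theta(g)$ to be the joint isomorphism class of: (a) the pointed edge-labelled ball $g^{-1} B(g, N)$; (b) the subset $g^{-1}(\omega_g \cap B(g, N))$ together with the marking of which vertices are $(\epsilon, R)$-transition points of $\omega_g$; (c) the peripheral configuration $\{(y, [Y]) : y \in g^{-1} B(g, N),\ gy \in N_\epsilon(Y),\ Y \in \mathbb{P}\}$, where $[Y]$ records only the $G$-conjugacy class of $Y$ in $\mathbb{P}$ together with the combinatorial shape of $gY \cap B(g, N)$ translated back to $1$. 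Local finiteness of $\Gx$ and the bounded-intersection property of $\mathbb{P}$ ensure that $\theta$ takes only $N_0 = N_0(\epsilon, R)$ many values; so it suffices to prove the transplant claim.

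For the transplant $\theta(g) = \theta(g') \Rightarrow t \cdot \Omega_{\epsilon, R}(g) \subseteq \Omega_{\epsilon, R}(g')$, take $h \in \Omega_{\epsilon, R}(g)$. The short case $d(1, h) \le d(1, g) + 2R$ is immediate from $t$ being an isometry. Otherwise, pick a geodesic $\gamma = [1, h]$ passing through $g$ with an $(\epsilon, R)$-transition point $v \in \gamma \cap B(g, 2R)$, and form $\gamma_0 := \omega_g \cdot [g, h]_\gamma$, also a geodesic from $1$ to $h$. Lemma \ref{PairTransG} applied to $\gamma$ and $\gamma_0$ yields an $(\epsilon, R)$-transition point $w \in \gamma_0$ with $d(v, w) \le D$; absorbing $D$ into the slack as above, we may assume $w \in \omega_g \cap B(g, 2R)$. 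Now define
$$\gamma' := \omega_{g'} \cdot t \cdot [g, h]_\gamma;$$
a length computation gives $\len(\gamma') = d(1, g') + d(g, h) = d(1, h) = d(1, th)$, so $\gamma'$ is a geodesic from $1$ to $th$, with $tw \in \gamma' \cap B(g', 2R)$.

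The hard part is verifying that $tw$ is an $(\epsilon, R)$-transition point of the concatenated path $\gamma'$, rather than merely of $\omega_{g'}$ or of $t[g, h]_\gamma$ in isolation. The ball $B(tw, R) \cap \gamma'$ splits at $tw$ into a prefix half inside $\omega_{g'}$ and a suffix half inside $t \cdot [g, h]_\gamma$. The prefix half is the $t$-image of the corresponding prefix of $\omega_g$ near $w$, which is recorded in $\theta(g) = \theta(g')$; the suffix half is the $t$-image of a piece of $\gamma_0$ at $w$, whose non-containment in any peripheral coset is exactly the transitionality of $w$ on $\gamma_0$. By the choice of $N$, the only $Y \in \mathbb{P}$ whose $\epsilon$-neighbourhood meets $B(tw, R)$ are those visible inside $B(g', N)$, and these are matched bijectively via $\theta$ with the analogous cosets near $g$. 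Hence the union $B(tw, R) \cap \gamma'$ is not $\epsilon$-contained in any single $Y \in \mathbb{P}$, so $tw$ is indeed $(\epsilon, R)$-transitional in $\gamma'$. This places $th \in \Omega_{\epsilon, R}(g')$; the reverse inclusion follows by the symmetric argument, yielding $N_0(\epsilon, R)$ as the desired bound on partial cone types.
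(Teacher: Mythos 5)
The central unjustified step is the assertion that $\gamma' := \omega_{g'} \cdot t[g,h]_\gamma$ is a geodesic. The displayed chain $\len(\gamma') = d(1,g') + d(g,h) = d(1,h) = d(1,th)$ is not a computation: the middle equality would require $d(1,g') = d(1,g)$, which a finite-valued invariant $\theta$ cannot record and which fails in general, and the last equality is precisely what must be proved. Ruling out shortcuts from $1$ to $th$ that bypass $g'$ is the entire non-trivial content of Cannon's cone-type argument; the paper does real work here, arguing by induction on $d(g,h)$, extending one generator at a time, and using Karlsson's Visibility Lemma \ref{karlssonlem} (via the Floyd lower bound $\rho_g(1,h)>\kappa$ from Lemma \ref{floydtrans} at a nearby transition point) to force any competing geodesic $[1,g'ws]$ to pass within $2C+1$ of $g'$, where a putative shortcut contradicts $F_g=F_{g'}$. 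Your invariant supplies no substitute for this mechanism; note also that part (a) of $\theta$, the pointed edge-labelled ball $g^{-1}B(g,N)$, is identical for every $g$ by vertex-transitivity of $\Gx$, so it carries no information unless you additionally mark the sublevel set $\{f: d(1,gf)\le d(1,g)\}$, which is in essence the paper's $F_g$.

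Secondarily, the transition-point transplant is not airtight. Lemma \ref{PairTransG} requires $d(v,h)>L(\epsilon,R,0)$, and the intermediate range $2R< d(g,h)\le L+2R$ is not covered by either your short case or your appeal to that lemma. Moreover the lemma only gives $d(v,w)\le D$, hence $w\in B(g,2R+D)$ rather than $B(g,2R)$; ``absorbing $D$'' silently replaces $\Omega_{\epsilon,R}$ by a larger set and therefore does not prove the stated claim. The paper sidesteps both issues by verifying transitionality inside the same one-step induction, reading it off directly from the hypothesis $B(g,2R+1)\cap\Omega_{\epsilon,R}(g)=B(g',2R+1)\cap\Omega_{\epsilon,R}(g')$.
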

\begin{proof}
Let $\epsilon, R_0>0$ be given by Lemma \ref{floydtrans}. For any $R
> R_0$, let $\Omega_{\epsilon, R}(g)$ be a partial cone and $h$ an
element in $\Omega_{\epsilon, R}(g)$. We first make the following
claim.
\begin{claim}
Assume that $d(h, g) \ge 2R+1$ for $h\in \Omega_{\epsilon, R}(g)$. Then there exists a constant $\kappa
>0$ such that the following holds
$$
\rho_{g}(1, h) > \kappa.
$$
\end{claim}
\begin{proof}
By definition of the partial cone, there exists some geodesic $\gamma=[1, h]$ with  $g \in \gamma$ such that
$\gamma$ contains an $(\epsilon, R)$-transition point $v$ in $B(g,
2R)$.


Since $v$ is an $(\epsilon, R)$-transition point in $\gamma$, there
exists $\kappa=\kappa(\epsilon, R)$ given by Lemma \ref{floydtrans}
such that $\rho_v(1, h) > \kappa$. Note that $d(v, g) \le 2R$. By
the property (\ref{equivmetric}), we can assume that $\rho_{g}(1,
h) > \kappa$, up to divide $\kappa$ by a constant depending
on $2R$.
\end{proof}

Define $C =\varphi(\kappa)$, where $\varphi$ is given by Lemma
\ref{karlssonlem}. Let $F_g$ be the subset of elements $f\in B(g, 2C+1)$ such that $d(1, gf) \le d(1, g)$. Consider 
\begin{equation}\label{ggequal}
B_g:=B(g, 3R+1) \cap (\Omega_{\epsilon, R}(g)\cup \mathcal C(g)),
\end{equation} where $\mathcal C(g)$ is the union of all possible geodesics $[1, g]$ between $1$ and $g$.  

\begin{claim}
The sets $F_g$ and $B_g$ together determine the
type of the partial cone $\Omega_{\epsilon, R}(g)$.
\end{claim}

\begin{proof}[Proof of Claim]
Let $g, g'\in G$ such that  $F_g=F_{g'}$ and $B_g=B_{g'}$. For any $h \in
\Omega_{\epsilon, R}(g)$ and  $w := g^{-1}h$, we shall prove that $g'w \in
\Omega_{\epsilon, R}(g')$ by induction on $n=d(1, w)\ge 0$. The base case ``$n\le 3R+1$'' is true by $B_g=B_{g'}$. 

Assume $gw \in \Omega_{\epsilon, R}(g)$ and $g'w \in \Omega_{\epsilon, R}(g')$. Let $s \in S$ be
a generator such that $gws \in \Omega_{\epsilon, R}(g)$. So the following holds \begin{equation}\label{straight} d(1, gws)
=d(1, g) +d(1, w) +1.
\end{equation} We shall show that $g'ws
\in \Omega_{\epsilon, R}(g')$ by verifying the following two properties: 

\begin{itemize}
\item
the concatenation of some geodesics $[1, g'], [g', g'w]$ and $[g'w, g'ws]$ is a geodesic, and
\item
it contains an $(\epsilon, R)$-transition point in $B(g', 2R)$. 
\end{itemize} 
By assumption, the subpath $[1, g][g, gw]\cap B(g, 3R+1)$ contains an $(\epsilon, R)$-transition point. Note that the transition point is invariant by translation.  Since $B_g=B_{g'}$, the two segments $[1, g][g, gw]\cap B(g, 3R+1)$ and $[1, g'][g, g'w]\cap B(g, 3R+1)$ have the same label, and so $[1, g'][g', g'w]$ does contain an $(\epsilon, R)$-transition point in $B(g', 3R+1)$. The second property follows. So, it suffices to verify that the concatenation path is a geodesic. Suppose, by way of contradiction, that
\begin{equation}\label{notstraight}
d(1, g'ws) \le d(1, g') + d(1, w).
\end{equation} Consider a geodesic $\gamma$ between $1$ and $g'ws$. By  Induction Assumption, $[1, g'][g', g'w]$ is a geodesic, and then we get $d(g', \gamma) < C$ by the previous Claim. Thus, there exists $u \in \gamma$ such that $d(g', u) < 2C$ and
$d(1, u) = d(1, g')$. By (\ref{notstraight}), it follows that $d(u,
\gamma_+) \le d(1, w)$.

Setting $f := g'^{-1}u$, we have $f\in F_{g'}=F_g$ and so $d(1, gf) \le d(1, g)$. Denote by $k$ the element represented by the geodesic segment $[u, \gamma_+]_\gamma$.  Then $f k = ws$. Hence,   $$d(1, gws) =d(1, gf  k) \le d(1, gf) + d(u, \gamma_+) \le d(1, g) + d(1,
w).$$ This contradicts with (\ref{straight}). The first property holds: $g'ws \in
\Omega(g')$. The claim is proved.
\end{proof}

The finiteness of partial cone types now follows from the second
claim.
\end{proof}



\bibliographystyle{amsplain}
 \bibliography{bibliography}

\end{document}